\newtheorem{theorem}{Theorem}
\newtheorem{proposition}[theorem]{Proposition}
\newtheorem{lemma}[theorem]{Lemma}
\newtheorem{corollary}[theorem]{Corollary}
\newtheorem*{claim}{Claim}
\theoremstyle{remark}
\newtheorem*{remark}{Remark}
 \newcommand{\dd}{\textrm{d}}
 \newcommand{\Nat}{\mathbb{N}}
 \newcommand{\Real}{\mathbb{R}}
 \renewcommand{\theta}{\vartheta}
 \newcommand{\ph}{\varphi}
 \DeclareMathOperator{\RE}{Re}
 \DeclareMathOperator{\IM}{Im}
 \DeclareMathOperator*{\argmax}{arg\,max}
 \DeclareMathOperator{\Hf}{H}
 \DeclareMathOperator{\HH}{HH}
 \DeclareMathOperator{\PD}{PD}
 \DeclareMathOperator{\NS}{NS}
\gdef\harvardcite#1#2#3#4{%
  \global\@namedef{HAR@fn@#1}{\hyper@@link[cite]{}{cite.#1}{#2}}%
  \global\@namedef{HAR@an@#1}{\hyper@@link[cite]{}{cite.#1}{#3}}%
  \global\@namedef{HAR@yr@#1}{\hyper@@link[cite]{}{cite.#1}{#4}}%
  \global\@namedef{HAR@df@#1}{\csname HAR@fn@#1\endcsname}%
}
\author{Dirk L. van Kekem \& Alef E. Sterk}
\title{Travelling waves and their bifurcations in the Lorenz-96 model\\
{\normalsize Published in \emph{Physica D}, \href{https://doi.org/10.1016/j.physd.2017.11.008}{doi:10.1016/j.physd.2017.11.008}}}
\date{Accepted manuscript (\today)}
\begin{document}

\maketitle

\begin{abstract}
\noindent In this paper we study the dynamics of the monoscale Lorenz-96 model using both analytical and numerical means. The bifurcations for positive forcing parameter $F$ are investigated. The main analytical result is the existence of Hopf or Hopf-Hopf bifurcations in any dimension $n\geq4$. Exploiting the circulant structure of the Jacobian matrix enables us to reduce the first Lyapunov coefficient to an explicit formula from which it can be determined when the Hopf bifurcation is sub- or supercritical. The first Hopf bifurcation for $F>0$ is always supercritical and the periodic orbit born at this bifurcation has the physical interpretation of a travelling wave. Furthermore, by unfolding the codimension two Hopf-Hopf bifurcation it is shown to act as an organising centre, explaining dynamics such as quasi-periodic attractors and multistability, which are observed in the original Lorenz-96 model. Finally, the region of parameter values beyond the first Hopf bifurcation value is investigated numerically and routes to chaos are described using bifurcation diagrams and Lyapunov exponents. The observed routes to chaos are various but without clear pattern as $n\rightarrow\infty$.
\end{abstract}

\section{Introduction}
\subsection{Setting of the problem}
In his 1996 paper \cite{Lorenz06}, Edward Lorenz introduced two models to study fundamental issues regarding the predictability of the atmosphere and weather forecasting. The so-called monoscale Lorenz-96 model is defined by the equations
\begin{subequations}\label{eq:Lorenz96}
\begin{equation}
    \dot{x}_j =  x_{j-1}(x_{j+1}-x_{j-2}) - x_j + F, \qquad j = 1, \ldots, n,\label{eq:Lorenz96eq}
\end{equation}
where we take the indices modulo $n$ by the following `boundary condition'
\begin{equation}
    x_{j-n} = x_{j+n} = x_j,\label{eq:Lorenz96bc}
\end{equation}
\end{subequations}
resulting in a model with circulant symmetry. For the multiscale model, which will not be discussed in this paper, the reader is referred to \cite{Lorenz06}. The model~\eqref{eq:Lorenz96} can be interpreted as a model for atmospheric waves travelling along a circle of constant latitude. Lorenz interpreted the variables $x_j$ as values of some meteorological quantity (e.g., temperature, pressure, or vorticity) in $n$ equal sectors of a latitude circle, where the index $j$ plays the role of longitude. The continuous parameter $F$ represents external forcing and can be used as a bifurcation parameter.

Although the Lorenz-96 model is not derived from physical principles it still has features which are commonly found in geophysical models: forcing, dissipation and energy preserving quadratic terms. Moreover, unlike the traditional Lorenz-63 model \cite{Lorenz63} which has only one positive Lyapunov exponent, the Lorenz-96 model has multiple positive Lyapunov exponents for suitable choices of the parameters $F$ and $n$. For those reasons, and for the simplicity of the equations, the model is important and widely used nowadays and sometimes even called ``the archetype of large deterministic systems displaying chaotic behavior'' \cite{Fatkullin04} or ``a hallmark representative of nonlinear dynamical systems'' \cite{Frank14}. The applications of the Lorenz-96 model are broad and range from geophysical applications like data assimilation and predictability to studies in spatiotemporal chaos. Table~\ref{tab:AppLz96} gives an overview of recent papers in which the Lorenz-96 has been used together with the values of the parameters that were used.

In contrast to its importance, only a few studies have investigated the dynamics of this model. In \cite{Karimi10}, the high-dimensional chaotic dynamics has been explored by means of the fractal dimension. A recent study on patterns of order and chaos in the multiscale model has reported the existence of regions with standing waves~\cite{Frank14}. Bifurcation diagrams in low dimensions of the Lorenz-96 model have been studied in \cite{Orrell03}, although the emphasis of their work was on methods to visualise bifurcations by means of spectral analysis, rather than exploring the dynamics itself. The previous works already revealed an extraordinarily rich structure of the dynamical behaviour of the Lorenz-96 model for specific values of $n$. However, there has been no systematic study of the dynamics of this model yet. In this paper we fill this gap by studying the dynamical nature of the Lorenz-96 model in greater detail and give analytical proofs of some basic properties for all dimensions and of the existence of Hopf and Hopf-Hopf bifurcations. These results are complemented by numerical explorations, that includes the dynamics beyond these bifurcations as well.

The Lorenz-96 model is a \emph{family} of dynamical systems parameterised by the discrete parameter $n\in\Nat$ which gives the dimension of their state space. This setup is analogous to a discretised partial differential equation. In fact, in some works the Lorenz-96 model is interpreted as such \cite{Basnarkov12,Reich15}. In \cite{Lucarini07}, a discretised quasi-geostrophic model for the atmosphere was studied. In particular, they numerically observed that the parameter value at which the first Hopf bifurcation occurs typically increases with the truncation order of their discretisation method. In pseudo-spectral discretisations of Burgers' equation \cite{Basto06} qualitative differences in dynamics were observed depending on whether the dimension of state space was even or odd. We may expect similar phenomena for the Lorenz-96 model. Hence, in this paper we focus in particular on the question which quantitative and qualitative features of the dynamics will persist for (almost) all $n\in\Nat$. Answers to these questions may be helpful in selecting appropriate values of $n$ and $F$ for the specific applications listed in Table~\ref{tab:AppLz96}. For example, there is a direct relation between the dimension of attractors and the statistics of extreme events \cite{Holland12}. Although the study of this paper is unable to provide the entire picture, it offers a partial dynamical inventory using both analytical and numerical means.

\begin{table}[b]
\makebox[\textwidth][c]{
\begin{tabular}{llrr}
\hline
Reference & Application & $n$ & $F$ \\
\hline
\citeasnoun{Danforth06}    & Making forecasts                   & $40$ & 8 \\
\citeasnoun{Dieci11}       & Approximating Lyapunov exponents   & $40$ & 8 \\
\citeasnoun{Gallavotti14}  & Non-equilibrium ensembles          & $32$ & $\geq 8$ \\
\citeasnoun{Hansen00}      & Operational constraints            & $40$ & 8 \\
\citeasnoun{Leeuw17}       & Data assimilation                  & $36$ & 8 \\
\citeasnoun{Lorenz98}      & Optimal sites                      & $40$ & 8 \\
\citeasnoun{Lorenz05}      & Designing chaotic models           & $30$ & 10 (2.5, \ldots, 40) \\
\citeasnoun{Lorenz06}      & Predictability                     & 36 $(4)$ & 8 (15, 18) \\
\citeasnoun{Lucarini11}    & Ruelle linear response theory      & $40$ & 8 \\
\citeasnoun{Ott04}         & Data assimilation                  & 40, 80, $120$ & 8 \\
\citeasnoun{Stappers12}    & Adjoint modelling                  & $40$ & 8 \\
\citeasnoun{Sterk12}       & Predictability of extremes         & 36 & 8 \\
\citeasnoun{Sterk17}       & Predictability of extremes         & 4, 7, 24 & 11.85, 4.4, 3.85 \\
\citeasnoun{Trevisan11}    & Data assimilation                  & 40, 60, $80$ & 8 \\
\hline
\end{tabular}
}
\caption{Recent papers with applications of the monoscale Lorenz-96 model~\eqref{eq:Lorenz96} and the main values of $n$ and $F$ that were used. Almost all values are chosen in the chaotic domain ($F=8$) of dimension $n=36$ or $40$.}
\end{table}\label{tab:AppLz96}

\subsection{Sketch of the results}
The Lorenz-96 model~\eqref{eq:Lorenz96} has an equilibrium solution given by $x_F = (F,\dots,F)$ for all $n \geq 1$ and $F \in \Real$. Clearly, for $F = 0$ this equilibrium is stable. Numerical simulations show that for $F = 1.2$ the dynamics of the model is periodic for all $n \geq 4$. This suggests that for $0 < F < 1.2$ a supercritical Hopf bifurcation occurs at which the equilibrium $x_F$ loses its stability and gives birth to a periodic attractor. Figure~\ref{fig:periods} shows that the period of the periodic attractor at $F=1.2$ is an oscillating function of the dimension $n$. Observe that the oscillations decay with $n$ and that the period seems to converge to a value of approximately 4.5. The spatiotemporal properties of these periodic orbits are further explored in Figure~\ref{fig:hovmoller} by means of so-called Hovm\"oller diagrams \cite{Hovmoller49}. In these diagrams the value of the variables $x_j(t)$ is plotted as a function of time $t$ and ``longitude'' $j$. Clearly, the waves are travelling in the direction of decreasing $j$ and their wave number increases with $n$. The numerical results clearly indicate that spatiotemporal properties of travelling waves in the Lorenz-96 model depend on the dimension $n$.

A natural question is then which properties stabilize in the limit $n\to\infty$. In this paper we will present the following results which provide a (partial) answer to this question:
\begin{itemize}

\item For all $n\geq 4$ we prove that the trivial equilibrium $x_F=(F,\dots,F)$ exhibits several Hopf or Hopf-Hopf bifurcations for the parameter value $F_{\Hf} \in \Real$. In case of a Hopf bifurcation, we also prove whether the bifurcation is sub- or supercritical.

\item For $F>0$ we prove that the first Hopf bifurcation takes place for $F \in (\tfrac{8}{9},1.19)$ and is always supercritical. The periodic attractor born at this bifurcation has the physical interpretation of a travelling wave. At the Hopf bifurcation the period of this wave is an oscillating function of $n$ which tends to $T_\infty\approx 4.86$ as $n\to \infty$ and the wave number increases linearly with $n$.

\item We encounter further bifurcations of the stable orbit beyond the value $F_{\Hf}$ for which the first Hopf bifurcation takes place. Eventually, this leads to chaotic behaviour. The diagram in Figure~\ref{fig:bifLC} shows the bifurcations following only the stable orbit for various dimensions $n$. Also, the parameter value where chaos sets in is indicated. A clear pattern for all $n$ can not be discerned from the diagram, though a pattern is observed for dimensions $n\leq100$ where $n$ is a multiple of $5$.

\item To unfold the codimension two Hopf-Hopf bifurcation we add an extra parameter to the original model~\eqref{eq:Lorenz96} via a Laplace-like diffusion term in such a way that the original model is easily retrieved. The thus obtained two-parameter system clarifies the role of the Hopf-Hopf bifurcation as organising centre and so it sheds more light on the original model, especially for $n=12$ in which case the Hopf-Hopf bifurcation is the first bifurcation for $F>0$.
\end{itemize}

From our results we can conclude that the spatiotemporal properties of the Lorenz-96 model depend on $n$. This again shows the importance of selecting an appropriate value of $n$ in specific applications. The presence of a Hopf or Hopf-Hopf bifurcation persists for all $n\geq 4$ and the $F_{\Hf}$-value of the first of these bifurcations converges to $\tfrac{8}{9}$ as $n\to\infty$. However, since the resulting waves have different wave numbers the subsequent bifurcation patterns vary with $n$. The case $n=5m$ seems to follow a more regular pattern and will be discussed in Section~\ref{sec:n=5}. Furthermore, the linear increase of the wave number with $n$ indicates that the Lorenz-96 model cannot be interpreted as a discretised partial differential equation.

\begin{figure}[ht!]
  \centering
  \includegraphics[width=\textwidth]{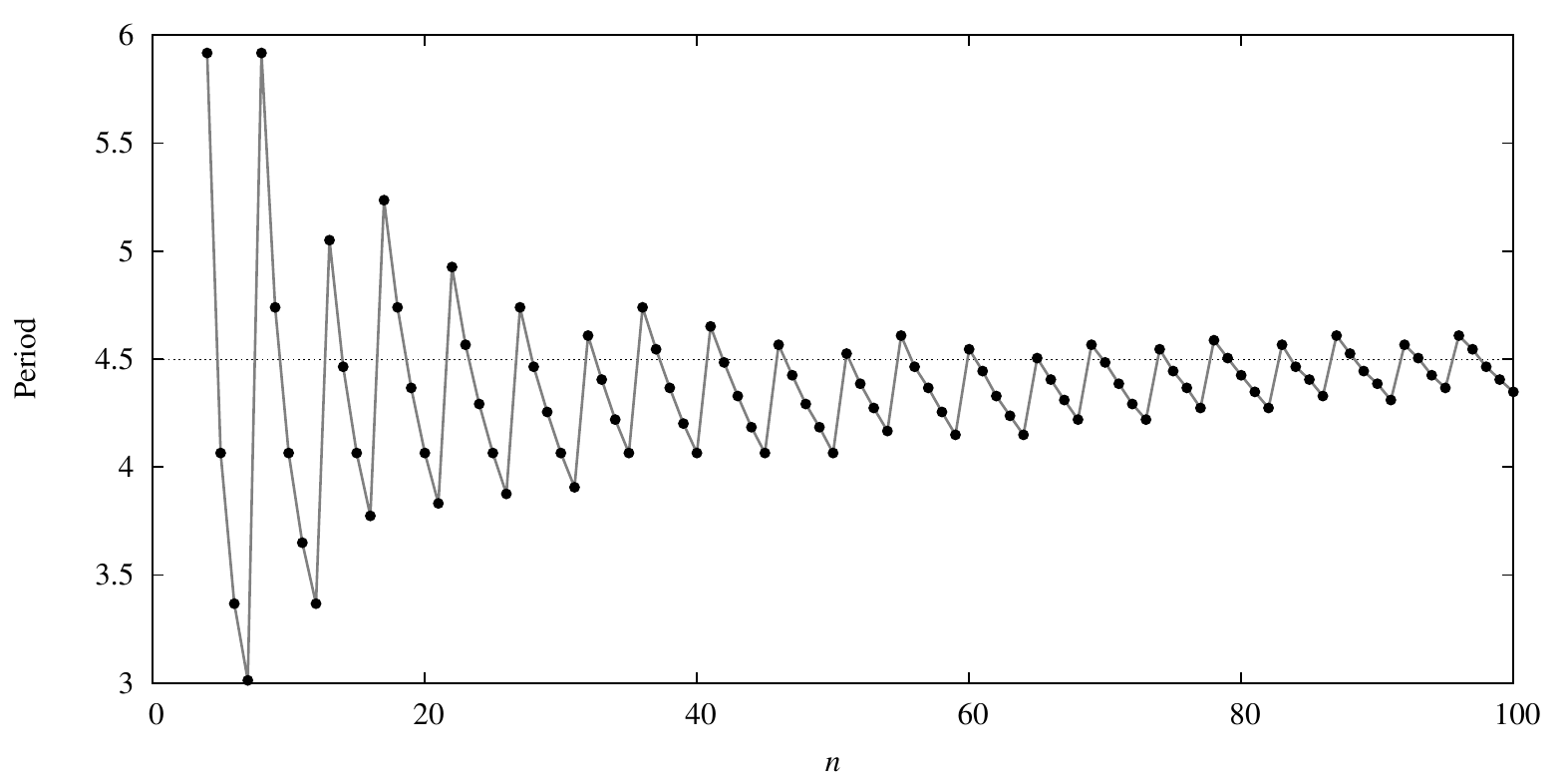}\\
  \caption{The period of the periodic attractor of the Lorenz-96 model detected for $F=1.2$ plotted as a function of the dimension $n$. Note that the period converges to approximately 4.5 as $n\rightarrow\infty$. See Figure~\ref{fig:PeriodComparison} for a comparison with the theoretical period at the Hopf bifurcation.}\label{fig:periods}
\end{figure}

\begin{figure}[ht!]
  \centering
  \includegraphics[width=0.48\textwidth]{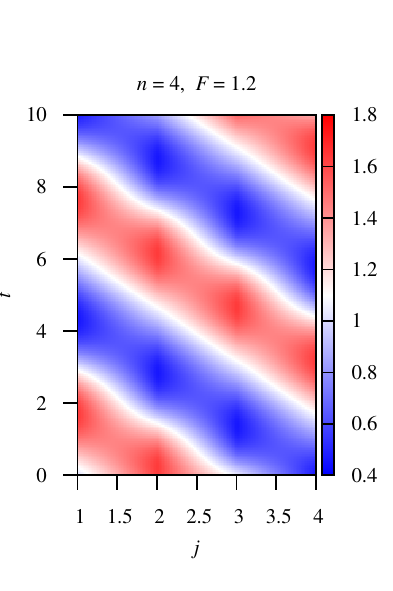}
  \includegraphics[width=0.48\textwidth]{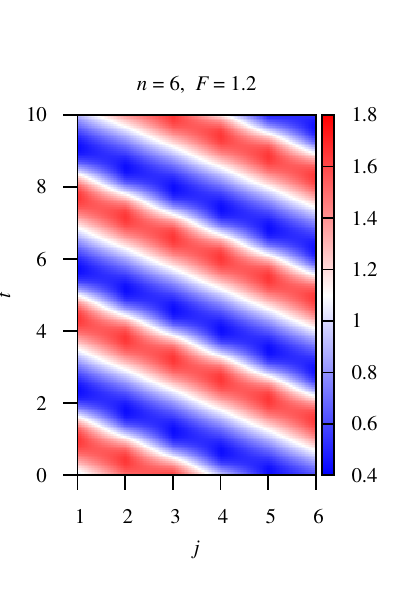} \\
  \includegraphics[width=0.48\textwidth]{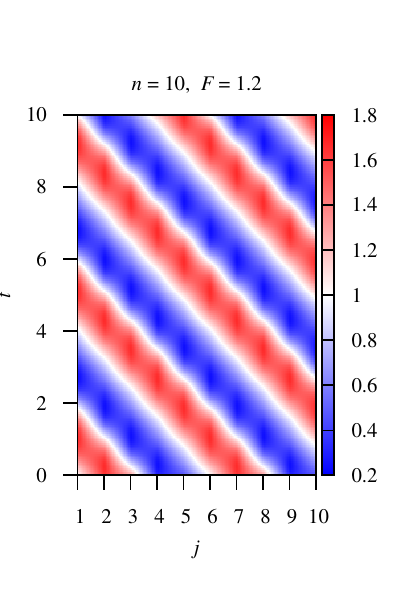}
  \includegraphics[width=0.48\textwidth]{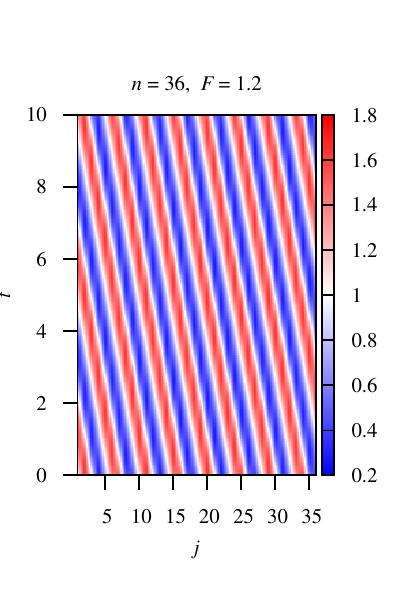}
  \caption{Hovm\"oller diagrams of periodic attractors in the Lorenz-96 model for various dimensions $n$ and parameter value $F$ right after the first Hopf bifurcation. The value of $x_j(t)$ is plotted as a function of $t$ and $j$. For visualization purposes linear interpolation between $x_j$ and $x_{j+1}$ has been applied in order to make the diagram continuous in the variable $j$. Note that both the period and the wave number depend on the choice of $n$.}\label{fig:hovmoller}
\end{figure}

\begin{figure}[p]
  \centering
  \includegraphics[width=\textwidth]{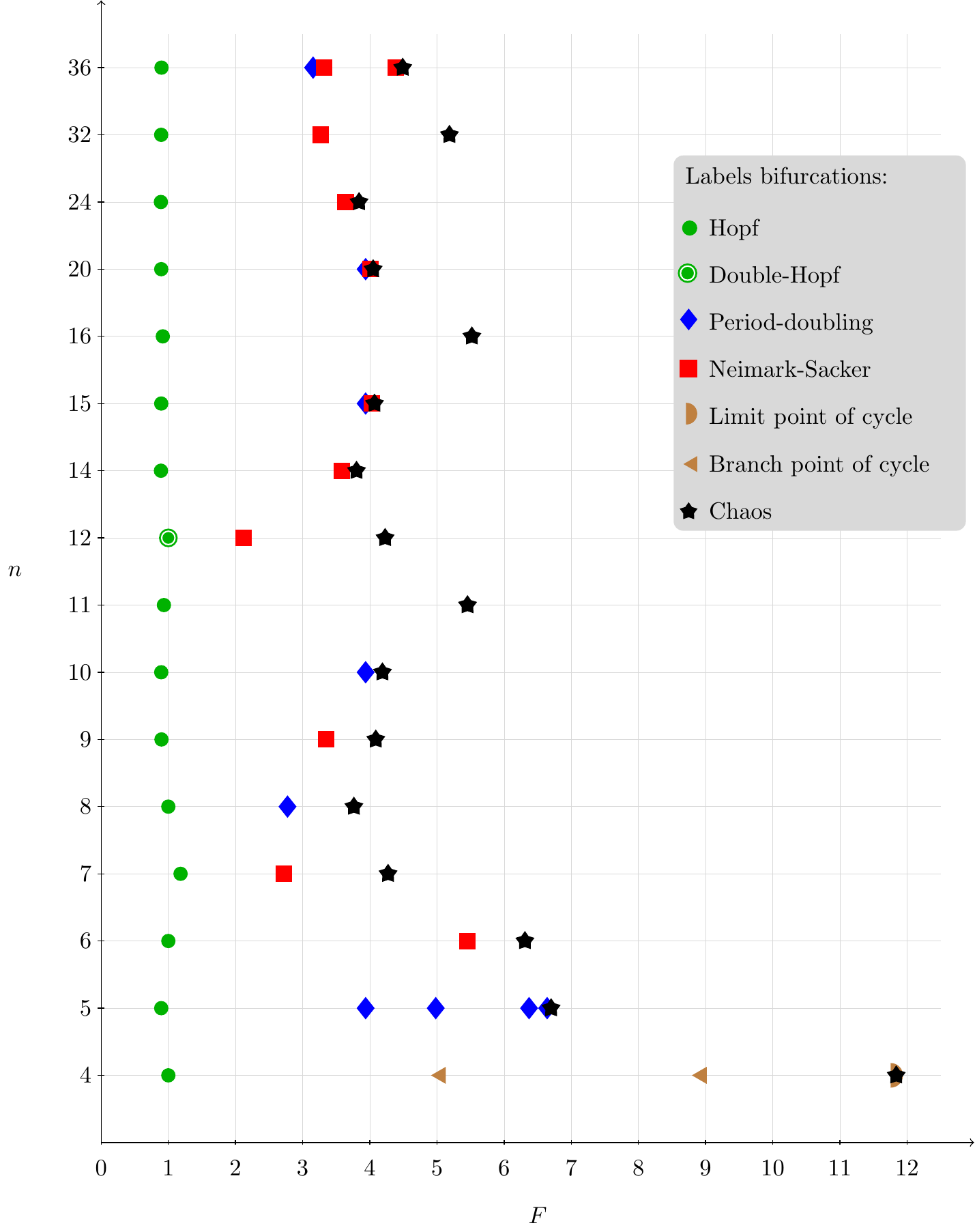}
  \caption{Diagram showing the bifurcations of the stable attractor for $F\in[0,12]$ and for various values of $n$. Each symbol denotes a bifurcation or onset of chaos at the corresponding value of $F$. The type of bifurcation is shown by the legend at the right. Note that we only show (visible) bifurcations of the stable orbits which lead eventually to chaos. Also, we do not include bifurcations of other stable branches, arising from fold bifurcations, for example.}\label{fig:bifLC}
\end{figure}

\subsection{Outline}

This paper has been organised as follows. Section~\ref{sec:AnalyticalResults} presents the analytical results of the research. It starts with some general properties of the Lorenz-96 model, followed by the main theorems of this paper, which concern the Hopf and Hopf-Hopf bifurcations. Subsequently, we show that the periodic orbits, resulting from these bifurcations, can be interpreted as travelling waves. After this, we describe the unfolding of the Hopf-Hopf bifurcation. Next, Section~\ref{sec:NumericalResults} is concerned with a numerical analysis of the dynamics after the first bifurcation for positive $F$. The proofs of all analytical results follow in Section~\ref{sec:proofs}.

\newpage
\section{Analytical results}\label{sec:AnalyticalResults}
In this section we give an overview of the basic properties of the Lorenz-96 model and state the main theorems of this paper.

\subsection{General properties}\label{sec:General}
\paragraph{Trapping Region}The Lorenz-96 model is constructed such that the quadratic part does not affect the total energy of the system,
\begin{equation*}\label{eq:Lz96totalenergy}
E = \tfrac{1}{2}\sum_j x_j^2.
\end{equation*}
This property implies the existence of a trapping region in all dimensions \cite{Lorenz84a}. We will prove this in Proposition~\ref{prop:Exttrappingregion} in the more general setting of a two-parameter unfolding of the Lorenz-96 model.

\paragraph{Equilibrium}The previous result on the existence of a global attractor shows that the attractors are found in a neighbourhood of the origin. It is easy to see that system~\eqref{eq:Lorenz96} has in any dimension the trivial equilibrium
\begin{equation}\label{eq:Lztriveq}
  x_F = (F,\ldots,F),
\end{equation}
where we take $F\in\Real$.

Due to the symmetry in our system we are able to determine the eigenvalues of $x_F$ explicitly. In any dimension, the Jacobian matrix at this equilibrium is a circulant matrix, which means that each row is a right cyclic shift of the row above it. Let us first look at the case $n < 4$, which differs from the general case $n\geq4$. If $n = 1$ or $3$, then the Jacobian matrix is equal to minus the identity matrix and so the eigenvalues are all equal to $-1$. In case $n = 2$, the first row of the Jacobian matrix is given by $(-1-F, F)$ and, hence, its eigenvalues are $\lambda_0 = -1$ and $\lambda_1 = -1-2F$. Since all these eigenvalues never become complex, one can conclude that we do not have any Hopf bifurcation of the trivial equilibrium for dimensions $n < 4$.

Next, consider the case $n \geq 4$. Denote the first row of the Jacobian matrix at $x_F$ by
\begin{equation*}\label{eq:firstrowcirculant}
(c_0, c_1, \dots, c_{n-1}),
\end{equation*}
where $c_0 = -1$, $c_1 = F$, $c_{n-2} = -F$ and $c_k = 0$ for $k \neq 0, 1, n-2$.
It follows that the eigenvalues are given by \cite{Gray06}
\begin{equation*}
  \lambda_j = \sum_{k=0}^{n-1} c_k \rho_j^k, \qquad \rho_j = \exp\left(-2\pi i \tfrac{j}{n}\right), \qquad j=0,\dots,n-1,
\end{equation*}
which can be expressed in terms of $F, j$ and $n$:
\begin{align}\label{eq:Lzsimpleevn}
  \lambda_j(F,n) &= -1 + F\rho_j^1 - F\rho_j^{n-2}\nonumber\\
    &= -1 + F\left(\exp\left(-2\pi i\tfrac{j}{n}\right) - \exp\left(4\pi i\tfrac{j}{n}\right)\right)\nonumber\\
    &= -1 + F f(j,n) + F g(j,n) i,
\end{align}
where $f$ and $g$ are defined as
\begin{equation}\label{eq:Lzevfg}
  \begin{aligned}
  f(j,n) &= \cos \tfrac{2\pi j}{n} - \cos \tfrac{4\pi j}{n},\\
  g(j,n) &= -\sin \tfrac{2\pi j}{n} - \sin \tfrac{4\pi j}{n}.
\end{aligned}
\end{equation}
The eigenvector $v_j$ corresponding to $\lambda_j$ can also be expressed in terms of $\rho_j$:
\begin{equation}\label{eq:Lzeigenvector}
v_j = \frac{1}{\sqrt{n}}\begin{pmatrix*} 1 \\ \rho_j \\ \rho_j^2 \\ \vdots \\ \rho_j^{n-1} \end{pmatrix*}.
\end{equation}

Observe that the eigenvalue $\lambda_0$ equals $-1$ and its corresponding eigenvector has all entries equal to 1. Due to the fact that $\rho_{n-j} = \bar{\rho}_j$, all the other eigenvalues and eigenvectors form complex conjugate pairs since
\begin{align}\label{eq:Lzevconj}
  \lambda_j &= \bar{\lambda}_{n-j},\\
  v_j &= \bar{v}_{n-j},\nonumber
\end{align}
except when $n$ is even, in which case the eigenvalue for $j = \tfrac{n}{2}$ is real and equals $\lambda_{n/2} = -1 - 2F$. Note as well that when $n$ is a multiple of 3 the eigenvalues for $j = \tfrac{n}{3}, \tfrac{2n}{3}$ both equal $-1$. We call the pair $\{\lambda_j, \lambda_{n-j}\}$ the \emph{$j$-th eigenvalue pair}. In the next subsection we will see that each complex eigenvalue pair has a particular parameter value $F$ for which it crosses the imaginary axis and thus causes a Hopf bifurcation.

\subsection{Hopf Bifurcations}\label{sec:Hbif}
The main part of this section is devoted to bifurcations of the trivial equilibrium~\eqref{eq:Lztriveq} for positive values of $F$. To find bifurcations in the Lorenz-96 model, we take $F$ as the bifurcation parameter and vary it along the real line. In this way we discover the occurrence of several Hopf and Hopf-Hopf bifurcations, which we summarize here in two theorems. These are preceded by a lemma which proves that we have the desired eigenvalue crossing that is needed for both cases. The proofs of these results are postponed to Section~\ref{sec:proofs}.

Before we formulate our results on the Hopf-Hopf and Hopf bifurcation in Theorem~\ref{thm:HHBif1} and~\ref{thm:HBif}, respectively, let us first state the preliminary result:
\begin{lemma}[Eigenvalue crossing]\label{lem:Bifevcrossing}
  Let $n\geq4$ and $l \in \Nat$ such that $0 < l < \tfrac{n}{2}, l\neq \tfrac{n}{3}$, then the following holds:
  \begin{enumerate}
    \item The $l$-th eigenvalue pair $\{\lambda_l, \lambda_{n-l}\}(F,n)$ of the trivial equilibrium $x_F$ of system~\eqref{eq:Lorenz96} crosses the imaginary axis transversally at the parameter value $F_{\Hf}(l,n) := 1/f(l,n)$ and thus the equilibrium changes stability.
    \item $F_{\Hf}(l,n)$ lies in the domain $\left(F_{\min}(n),-\tfrac{1}{2}\right) \cup\left[\tfrac{8}{9},F_{\max}(n)\right)$ with
  \begin{align*}
    F_{\min}(n) &= \left\{
        \begin{array}{ll}
            -\tfrac{1}{2}         &\text{if}\ n = 4, 6,\\
            \tfrac{1}{f(r+1,n)}   &\text{otherwise},
        \end{array}
    \right.\\
    F_{\max}(n) &= \left\{
        \begin{array}{ll}
            \tfrac{1}{f(2,7)} &\quad \text{if}\ n = 7,\\
            \tfrac{1}{f(1,n)} &\quad \text{otherwise},
        \end{array}
    \right.
  \end{align*}
  where $r$ is the quotient of $n$ after division by 3.
  \end{enumerate}
\end{lemma}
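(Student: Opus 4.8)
The plan is to rewrite the trigonometric quantities in product form, read off the eigenvalue crossing from there, and then reduce the location of $F_{\Hf}(l,n)$ to an elementary one-variable optimisation. By the sum-to-product identities, the functions in~\eqref{eq:Lzevfg} become
\[
  f(j,n) = 2\sin\tfrac{3\pi j}{n}\,\sin\tfrac{\pi j}{n},
  \qquad
  g(j,n) = -2\sin\tfrac{3\pi j}{n}\,\cos\tfrac{\pi j}{n}.
\]
For $0<l<\tfrac n2$ one has $0<\tfrac{\pi l}{n}<\tfrac\pi2$, so $\sin\tfrac{\pi l}{n}>0$ and $\cos\tfrac{\pi l}{n}>0$; and $0<\tfrac{3\pi l}{n}<\tfrac{3\pi}{2}$, so $\sin\tfrac{3\pi l}{n}$ vanishes only at $l=\tfrac n3$, which is excluded by hypothesis. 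Hence $f(l,n)\neq0$ and $g(l,n)\neq0$. For part~1, from~\eqref{eq:Lzsimpleevn} we have $\RE\lambda_l(F,n)=-1+F f(l,n)$, which vanishes exactly at $F=1/f(l,n)=:F_{\Hf}(l,n)$; moreover $\partial_F\RE\lambda_l=f(l,n)\neq0$ (transversality) and $\IM\lambda_l(F_{\Hf}(l,n),n)=g(l,n)/f(l,n)\neq0$, so a genuine complex-conjugate pair crosses the imaginary axis and the dimension of the unstable subspace of $x_F$ changes by $2$.

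For part~2, the sign of $F_{\Hf}(l,n)=1/f(l,n)$ equals that of $\sin\tfrac{3\pi l}{n}$: positive for $0<l<\tfrac n3$, negative for $\tfrac n3<l<\tfrac n2$, which is the origin of the two pieces of the claimed domain. Writing $\theta=\tfrac{\pi l}{n}$ and $h(\theta)=2\sin3\theta\,\sin\theta=\cos2\theta-\cos4\theta$, we have $F_{\Hf}(l,n)=1/h(\theta)$, with $h(0)=h(\tfrac\pi3)=0$ and $h(\tfrac\pi2)=-2$. Solving $h'(\theta)=-2\sin2\theta+4\sin4\theta=0$ with $\sin2\theta\neq0$ forces $\cos2\theta=\tfrac14$, at which point $h=\tfrac14-(2\cdot\tfrac1{16}-1)=\tfrac98$. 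Consequently $h$ is unimodal on $(0,\tfrac\pi3)$ (rising to $\tfrac98$, then falling to $0$), while on $(\tfrac\pi3,\tfrac\pi2)$ one has $\cos2\theta\in(-1,-\tfrac12)$, so $h'$ has no zero there and $h$ decreases strictly from $0$ to $-2$.

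These facts give the bounds. On the positive branch $h(\theta)\le\tfrac98$, so $F_{\Hf}(l,n)\ge\tfrac89$ (and, since $\cos2\theta=\tfrac14$ is impossible for $\theta$ a rational multiple of $\pi$, the value $\tfrac89$ is only approached as $n\to\infty$). Because $h$ is unimodal on $(0,\tfrac\pi3)$ and vanishes at both endpoints, its minimum over the admissible integers $l$ with $0<l<\tfrac n3$ is attained at one of the two extreme such $l$, namely $l=1$ or the largest admissible $l$ (for which $\tfrac{\pi l}{n}\approx\tfrac\pi3$); a direct comparison shows the minimiser is $l=1$ for every $n$ except $n=7$, where $l=2$ is smaller, which yields $F_{\Hf}(l,n)\le 1/f(1,n)$ in general and $\le 1/f(2,7)$ for $n=7$, i.e.\ $F_{\Hf}(l,n)<F_{\max}(n)$. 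On the negative branch $|h(\theta)|\le 2$ with equality only at $\theta=\tfrac\pi2$ (i.e.\ $l=\tfrac n2$, excluded), so $F_{\Hf}(l,n)<-\tfrac12$; and by the strict monotonicity of $h$ on $(\tfrac\pi3,\tfrac\pi2)$ the value of $h$ closest to $0$ occurs at the smallest admissible $l>\tfrac n3$, namely $l=r+1$, whence $F_{\Hf}(l,n)\ge 1/f(r+1,n)=F_{\min}(n)$. For $n=4,6$ there is no integer in $(\tfrac n3,\tfrac n2)$, so the negative branch is empty and the convention $F_{\min}(n)=-\tfrac12$ is immaterial.

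The main obstacle is the passage from the continuous function $h$ to the discrete variable $l$ on the positive branch: since $h\to0$ at \emph{both} endpoints of $(0,\tfrac\pi3)$, deciding whether $l=1$ or the largest admissible $l$ gives the smaller value of $f(l,n)$ requires a quantitative comparison, and the fact that $n=7$ is a genuine exception shows the bound is delicate. For large $n$ this is settled by the estimates $f(1,n)=2\sin\tfrac{3\pi}{n}\sin\tfrac\pi n$ being of order $n^{-2}$ versus $f$ at the largest admissible $l$ being of order $n^{-1}$, after which only finitely many small $n$ need be checked by hand; the analogous comparison on the negative branch is immediate from the strict monotonicity of $h$ there.
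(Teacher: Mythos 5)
Your part 1 and the continuous part of your analysis for part 2 are sound and essentially reproduce the paper's own argument: the paper likewise reduces to the extremal analysis of $\tilde f(y)=\cos y-\cos 2y$ (your $h(\theta)$ with $y=2\theta$), finds the maximum $\tfrac98$ at $\cos y=\tfrac14$ and the minimum $-2$ at $y=\pi$, deduces the bounds $\tfrac89$ and $-\tfrac12$, and obtains $F_{\min}(n)=1/f(r+1,n)$ from strict monotonicity on the negative branch, exactly as you do. Your reduction of the discrete minimisation of $f(l,n)$ over $0<l<\tfrac n3$ to the two extreme indices $l=1$ and the largest admissible $l'<\tfrac n3$, via unimodality, is also correct.

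The gap is the step that actually pins down $F_{\max}(n)$: the assertion that ``a direct comparison shows the minimiser is $l=1$ for every $n$ except $n=7$'' is never carried out. Your substitute --- $f(1,n)=O(n^{-2})$ versus $f(l',n)$ of order $n^{-1}$, ``after which only finitely many small $n$ need be checked by hand'' --- is a viable strategy, but as written it supplies no explicit constants, no threshold $n_0$, and no verification of the remaining cases. That is precisely where the content of the claim lives: soft estimates of the kind you indicate only settle the comparison for roughly $n\geq 14$, so $n=8,\dots,13$ must still be examined, and this range contains the delicate instances --- the genuine exception $n=7$ (to be treated separately) and the borderline case $n=10$, where $f(1,10)=f(3,10)$ so the inequality degenerates to equality (the Hopf-Hopf configuration of Corollary~\ref{cor:HHBif}). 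Without executing the comparison, the stated formula for $F_{\max}(n)$, including the $n=7$ exception, remains unproved. The paper closes exactly this step by splitting into the residues of $n$ modulo $3$, writing $f(1,n)-f(l',n)$ in product form, and reading off its sign for all $n\geq 8$; either that computation or a quantitative version of your asymptotic argument together with the explicit finite check is needed to complete the proof.
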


Due to the shape of the function $f(j,n)$, at most two eigenvalue pairs can have zero real part simultaneously for a particular value of $F$ (see Figure~\ref{fig:Lzevsplitting}). This indicates that the crossing of eigenvalue pairs, described in Lemma~\ref{lem:Bifevcrossing}, can lead to Hopf bifurcations and Hopf-Hopf bifurcations only.

\paragraph{Hopf-Hopf bifurcations}Let us first describe the Hopf-Hopf case: suppose that we have \emph{two} distinct eigenvalue pairs with $l_1$ and $l_2$ which both cross the imaginary axis at the same parameter value $F_{\HH} := F_{\Hf}(l_1,n) = F_{\Hf}(l_2,n)$. In that case we have a Hopf-Hopf bifurcation:
\begin{theorem}[Hopf-Hopf Bifurcation]\label{thm:HHBif1}
Let $l_1, l_2$ and $n$ satisfy the assumptions in Lemma~\ref{lem:Bifevcrossing} with $l_1 \neq l_2$. Then the trivial equilibrium $x_F$ exhibits a Hopf-Hopf bifurcation at $F_{\HH}$ if and only if $l_1$ and $l_2$ satisfy
  \begin{equation}\label{eq:LzHHbifcond1}
    \cos\tfrac{2\pi l_1}{n} +\cos\tfrac{2\pi l_2}{n} = \tfrac{1}{2}.
  \end{equation}
\end{theorem}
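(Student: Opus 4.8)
The plan is to reduce the existence of a Hopf--Hopf bifurcation to the linear eigenvalue condition of Lemma~\ref{lem:Bifevcrossing} and then to rewrite that condition as the identity~\eqref{eq:LzHHbifcond1}. A codimension-two Hopf--Hopf bifurcation of $x_F$ occurs at some value $F_{\HH}$ precisely when two distinct complex-conjugate eigenvalue pairs --- here the $l_1$-th and the $l_2$-th --- lie on the imaginary axis simultaneously at $F_{\HH}$, each crossing transversally with a nonzero frequency, the two frequencies are distinct, no further eigenvalue sits on the axis, and the relevant normal-form coefficients are non-degenerate. Transversality of each individual crossing is exactly Lemma~\ref{lem:Bifevcrossing}(1), and from $\RE\lambda_l(F,n)=-1+Ff(l,n)$ the $l$-th pair lies on the imaginary axis if and only if $F=F_{\Hf}(l,n)=1/f(l,n)$ (with $f(l,n)\neq0$, checked below). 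Hence the two pairs collide on the axis only at a common value, and this forces $f(l_1,n)=f(l_2,n)$; conversely, this equality produces such a value $F_{\HH}$.

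The core of the argument is the elementary identity
\[
  f(l,n)=\cos\tfrac{2\pi l}{n}-\cos\tfrac{4\pi l}{n}=1+\cos\tfrac{2\pi l}{n}-2\cos^2\tfrac{2\pi l}{n},
\]
obtained from the double-angle formula. Writing $u=\cos\tfrac{2\pi l_1}{n}$ and $v=\cos\tfrac{2\pi l_2}{n}$ gives $f(l_1,n)-f(l_2,n)=(u-v)\bigl(1-2(u+v)\bigr)$. Since $0<l_1,l_2<\tfrac{n}{2}$, the angles $\tfrac{2\pi l_1}{n},\tfrac{2\pi l_2}{n}$ lie in $(0,\pi)$ where $\cos$ is strictly monotone, so $l_1\neq l_2$ implies $u\neq v$; therefore $f(l_1,n)=f(l_2,n)$ is equivalent to $u+v=\tfrac12$, which is precisely~\eqref{eq:LzHHbifcond1}. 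This yields both implications of the equivalence.

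It remains to check that, when~\eqref{eq:LzHHbifcond1} holds, the collision at $F_{\HH}=1/f(l_1,n)$ is a genuine codimension-two point. First, $f(l_i,n)\neq0$, since $f(l,n)=0$ would force $\cos\tfrac{2\pi l}{n}\in\{1,-\tfrac12\}$, i.e.\ $l\in\{0,n/3\}$, excluded; hence $F_{\HH}$ is finite and nonzero. Second, the frequencies $\omega_i=F_{\HH}\,g(l_i,n)$ are nonzero, because $g(l,n)=-\sin\tfrac{2\pi l}{n}\bigl(1+2\cos\tfrac{2\pi l}{n}\bigr)$ vanishes only for $l\in\{0,n/2,n/3,2n/3\}$, all excluded. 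Third, $\omega_1\neq\omega_2$: from $f(l,n)^2+g(l,n)^2=2-2\cos\tfrac{6\pi l}{n}$ and $f(l_1,n)=f(l_2,n)$ one computes $g(l_1,n)^2-g(l_2,n)^2=4(u-v)(1+2uv)$, which vanishes only if $uv=-\tfrac12$; together with $u+v=\tfrac12$ this again forces $u,v\in\{1,-\tfrac12\}$, excluded. Fourth, no third pair lies on the axis: an index $l_3$ with $f(l_3,n)=f(l_1,n)$ would, by the same factorisation, satisfy both $\cos\tfrac{2\pi l_3}{n}+u=\tfrac12$ and $\cos\tfrac{2\pi l_3}{n}+v=\tfrac12$, forcing $u=v$; and the real eigenvalues $\lambda_0=-1$ and (for even $n$) $\lambda_{n/2}=-1-2F_{\HH}$ are nonzero since $F_{\HH}\notin\{0,-\tfrac12\}$ by Lemma~\ref{lem:Bifevcrossing}(2). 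The non-degeneracy of the Hopf--Hopf normal-form coefficients is generic and is addressed in Section~\ref{sec:proofs}.

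The main obstacle is not the equivalence itself --- a one-line trigonometric factorisation --- but the bookkeeping needed to rule out the degenerate sub-cases ($F_{\HH}=0$, a vanishing frequency, the resonant case $\omega_1=\omega_2$, and extra eigenvalues on the axis). Each of these reduces to showing that $\cos\tfrac{2\pi l}{n}$ cannot take the values $1$ or $-\tfrac12$, which is exactly where the hypotheses $0<l<\tfrac{n}{2}$ and $l\neq\tfrac{n}{3}$ of Lemma~\ref{lem:Bifevcrossing} are used.
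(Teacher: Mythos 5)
Your proof is correct, and it completes the key equivalence by a slightly different (and tidier) route than the paper. Both arguments reduce a Hopf--Hopf point to the simultaneous crossing condition $f(l_1,n)=f(l_2,n)$, but the paper then substitutes $x=\cos\tfrac{2\pi l}{n}$, views $f$ as the quadratic $h(x)=-2x^2+x+1$, and proves necessity from the symmetry of this parabola about its vertex $x=\tfrac14$ — which requires the extra positional bookkeeping that $\cos\tfrac{2\pi l_1}{n}$ and $\cos\tfrac{2\pi l_2}{n}$ lie on opposite sides of the maximum — and then proves sufficiency by a separate double-angle computation showing $\tilde f(y_1)=\tilde f(y_2)$. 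You instead factor the difference, $f(l_1,n)-f(l_2,n)=(u-v)\bigl(1-2(u+v)\bigr)$ with $u=\cos\tfrac{2\pi l_1}{n}$, $v=\cos\tfrac{2\pi l_2}{n}$, and use injectivity of the cosine on $(0,\pi)$ to get $u\neq v$; this yields both implications at once and dispenses with the side-of-the-vertex argument. What your version buys in addition is an explicit verification of nondegeneracy items the paper's proof leaves implicit: $F_{\HH}$ finite and nonzero, both frequencies nonzero, absence of $1{:}1$ resonance (your identity $g(l_1,n)^2-g(l_2,n)^2=4(u-v)(1+2uv)$, valid once $u+v=\tfrac12$ is in force, checks out), and no further eigenvalues on the imaginary axis — the paper only addresses nonresonance and normal-form nondegeneracy numerically for $n=12$ in Section~\ref{sec:Unfoldingn12}, and, like you, treats the normal-form coefficients as a separate matter rather than part of this theorem's proof.
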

From equation~\eqref{eq:LzHHbifcond1} we can deduce two infinite sequences of dimensions for which a Hopf-Hopf bifurcation takes place:
\begin{corollary}\label{cor:HHBif}
  Let $m\in\Nat$, then a Hopf-Hopf bifurcation occurs if we select $l_1, l_2$ and $n$ according to one of the following criteria:
  \begin{enumerate}
    \item\label{crit:HH1} $n=10 m$ and $l_1 = m, l_2 = 3 m$, which corresponds to $F_{\HH} = 2$;
    \item\label{crit:HH2} $n=12 m$ and $l_1 = 2 m, l_2 = 3 m$, which corresponds to $F_{\HH} = 1$.
  \end{enumerate}
\end{corollary}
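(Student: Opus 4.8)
The plan is to reduce the statement to Theorem~\ref{thm:HHBif1}. For each of the two families it suffices to check that the chosen triple $(l_1,l_2,n)$ satisfies the admissibility hypotheses of Lemma~\ref{lem:Bifevcrossing} (namely $n\geq4$, $0<l_i<\tfrac{n}{2}$ and $l_i\neq\tfrac{n}{3}$ for $i=1,2$, and $l_1\neq l_2$) together with the arithmetic condition~\eqref{eq:LzHHbifcond1}; the value of $F_{\HH}=F_{\Hf}(l_1,n)=1/f(l_1,n)$ is then read off from the formula for $f$ in~\eqref{eq:Lzevfg}. A pleasant feature of both families is that $\tfrac{2\pi l_i}{n}$ does not depend on $m$, so the whole verification is essentially $m$-free.

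For criterion~\ref{crit:HH1}, with $n=10m$, $l_1=m$, $l_2=3m$, we have $\tfrac{2\pi l_1}{n}=\tfrac{\pi}{5}$ and $\tfrac{2\pi l_2}{n}=\tfrac{3\pi}{5}$. Admissibility is immediate for every $m\geq1$: $n=10m\geq10$, $0<m<3m<5m=\tfrac{n}{2}$, and neither $m$ nor $3m$ equals $\tfrac{n}{3}=\tfrac{10m}{3}$ (since that would force $3=10$, respectively $m=0$). For~\eqref{eq:LzHHbifcond1} I would avoid invoking closed forms of the cosines and instead use that the real parts of the fifth roots of unity sum to $-\tfrac12$, i.e.\ $\cos\tfrac{2\pi}{5}+\cos\tfrac{4\pi}{5}=-\tfrac12$; combined with $\cos\tfrac{\pi}{5}=-\cos\tfrac{4\pi}{5}$ and $\cos\tfrac{3\pi}{5}=-\cos\tfrac{2\pi}{5}$ this yields $\cos\tfrac{\pi}{5}+\cos\tfrac{3\pi}{5}=\tfrac12$, which is exactly~\eqref{eq:LzHHbifcond1}. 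For the parameter value, note that $\tfrac{4\pi l_1}{n}=\tfrac{2\pi}{5}$ and $\tfrac{2\pi l_2}{n}=\tfrac{3\pi}{5}$ are supplementary, so $\cos\tfrac{4\pi l_1}{n}=-\cos\tfrac{2\pi l_2}{n}$, and hence $f(l_1,n)=\cos\tfrac{2\pi l_1}{n}-\cos\tfrac{4\pi l_1}{n}=\cos\tfrac{2\pi l_1}{n}+\cos\tfrac{2\pi l_2}{n}=\tfrac12$, giving $F_{\HH}=2$.

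For criterion~\ref{crit:HH2}, with $n=12m$, $l_1=2m$, $l_2=3m$, everything is elementary: $\tfrac{2\pi l_1}{n}=\tfrac{\pi}{3}$ and $\tfrac{2\pi l_2}{n}=\tfrac{\pi}{2}$, so $n=12m\geq12$, $0<2m<3m<6m=\tfrac{n}{2}$, and neither $2m$ nor $3m$ equals $\tfrac{n}{3}=4m$. Condition~\eqref{eq:LzHHbifcond1} reads $\cos\tfrac{\pi}{3}+\cos\tfrac{\pi}{2}=\tfrac12+0=\tfrac12$, and $f(l_1,n)=\cos\tfrac{\pi}{3}-\cos\tfrac{2\pi}{3}=\tfrac12+\tfrac12=1$, whence $F_{\HH}=1$. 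In each case Theorem~\ref{thm:HHBif1} then delivers the claimed Hopf--Hopf bifurcation (and evaluating $f(l_2,n)$ in place of $f(l_1,n)$ reproduces the same $F_{\HH}$, consistent with the fact that~\eqref{eq:LzHHbifcond1} forces $f(l_1,n)=f(l_2,n)$).

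I do not expect a genuine obstacle here: the corollary is essentially the verification that two explicit infinite families solve the Diophantine--trigonometric equation~\eqref{eq:LzHHbifcond1}. The only points that need a little care are (i) evaluating the fifth-root cosines in criterion~\ref{crit:HH1} by an exact argument rather than numerically, for which the roots-of-unity identity above suffices, and (ii) noting that the non-degeneracy and (non-)resonance conditions implicit in the phrase ``Hopf--Hopf bifurcation'' are already absorbed into Theorem~\ref{thm:HHBif1}, so that citing that theorem is all that remains once~\eqref{eq:LzHHbifcond1} and the admissibility hypotheses have been checked.
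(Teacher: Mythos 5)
Your proposal is correct and takes essentially the same route as the paper: both reduce the corollary to verifying condition~\eqref{eq:LzHHbifcond1} (and hence invoking Theorem~\ref{thm:HHBif1}) for the two explicit families, the only cosmetic difference being that the paper quotes the closed forms $\cos\tfrac{\pi}{5}=\tfrac{1}{4}(1+\sqrt{5})$, $\cos\tfrac{3\pi}{5}=\tfrac{1}{4}(1-\sqrt{5})$ where you use the roots-of-unity identity instead. Your explicit checks of the admissibility hypotheses of Lemma~\ref{lem:Bifevcrossing} and of the values $F_{\HH}=2$ and $F_{\HH}=1$ are correct and merely make explicit what the paper leaves implicit.
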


\begin{remark}
  Equation~\eqref{eq:LzHHbifcond1} gives a necessary and sufficient condition for the occurrence of a Hopf-Hopf bifurcation. However, the explicit values of $l_1, l_2$ and $n$, given in Corollary~\ref{cor:HHBif}, possibly do not provide all occasions where a Hopf-Hopf bifurcation occurs.
\end{remark}

\paragraph{Hopf bifurcations}On the other hand, if equation~\eqref{eq:LzHHbifcond1} is not satisfied then we have only one eigenvalue pair crossing the imaginary axis, which implies a Hopf bifurcation:
\begin{theorem}[Hopf Bifurcation]\label{thm:HBif}
Let $l$ and $n$ be as in Lemma~\ref{lem:Bifevcrossing}. If the $l$-th eigenpair is the \emph{only one} crossing the imaginary axis at the corresponding parameter value $F_{\Hf}(l,n)$, then the equilibrium $x_F$ exhibits a Hopf bifurcation at $F_{\Hf}$.
  The first Lyapunov coefficient for this bifurcation is given by
  \begin{equation*}
  \ell_1(l,n) = \frac{4}{n}\tan(\tfrac{\pi l}{n})\sin^2(\tfrac{3\pi l}{n})\frac{5\cos(\tfrac{2\pi l}{n})+8\cos(\tfrac{4\pi l}{n})-2\cos(\tfrac{6\pi l}{n})-8}{4\cos(\tfrac{2\pi l}{n})-4\cos(\tfrac{4\pi l}{n})+9}.
  \end{equation*}
  Fix $y_0 \in (0,\pi)$ such that
  \begin{equation*}
    5\cos y_0+8\cos 2 y_0 - 2\cos 3 y_0 -8 = 0,
  \end{equation*}
  then $\ell_1(l,n)$ is
  \begin{itemize}
    \item positive if $l$ and $n$ satisfy $0 < \tfrac{l}{n} < \tfrac{y_0}{2\pi} \approx 0.08825746$, which corresponds to a \emph{subcritical} bifurcation;
    \item negative if $\tfrac{l}{n}\in \left(\tfrac{y_0}{2\pi},\tfrac{1}{2}\right)\setminus\{\tfrac{1}{3}\}$ holds, which corresponds to a \emph{supercritical} bifurcation.
  \end{itemize}
\end{theorem}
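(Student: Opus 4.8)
The first assertion needs almost no new work. By Lemma~\ref{lem:Bifevcrossing}(1) the pair $\{\lambda_l,\lambda_{n-l}\}$ crosses the imaginary axis transversally at $F_{\Hf}(l,n)=1/f(l,n)$, so transversality in the parameter is already in hand. It remains to verify the genuine Hopf conditions at $F=F_{\Hf}$. First, $\omega_0:=\IM\lambda_l(F_{\Hf},n)=F_{\Hf}\,g(l,n)\neq0$: by~\eqref{eq:Lzevfg} one has $g(l,n)=-\sin\tfrac{2\pi l}{n}\bigl(1+2\cos\tfrac{2\pi l}{n}\bigr)$, which is nonzero precisely because $0<l<\tfrac n2$ and $l\neq\tfrac n3$. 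Second, $\pm i\omega_0$ must be simple and no other eigenvalue may sit on the imaginary axis: $\lambda_0=-1$; for even $n$ one has $\lambda_{n/2}=-1-2F_{\Hf}\neq0$ since Lemma~\ref{lem:Bifevcrossing}(2) keeps $F_{\Hf}$ away from $-\tfrac12$; any further pair on the axis would contradict the hypothesis that only the $l$-th pair crosses at $F_{\Hf}$; and $i\omega_0\neq-i\omega_0$ because $\omega_0\neq0$. Hence $x_F$ undergoes a non-degenerate Hopf bifurcation at $F_{\Hf}(l,n)$.

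The heart of the theorem is the formula for $\ell_1$. I would start from the standard coordinate-free expression for the first Lyapunov coefficient at a Hopf point, written through the critical eigenvector $q$ ($Aq=i\omega_0q$, $\omega_0>0$), the adjoint eigenvector $p$ ($A^{\mathsf T}p=-i\omega_0p$, $\langle p,q\rangle=1$), and the quadratic and cubic Taylor forms $B,C$ of the right-hand side of~\eqref{eq:Lorenz96}. Two structural features collapse this formula drastically. The nonlinearity is purely quadratic, so $C\equiv0$ and
\begin{equation*}
  \ell_1(l,n)=\frac{1}{2\omega_0}\RE\Bigl(-2\bigl\langle p,B(q,A^{-1}B(q,\bar q))\bigr\rangle+\bigl\langle p,B(\bar q,(2i\omega_0 I-A)^{-1}B(q,q))\bigr\rangle\Bigr),
\end{equation*}
with $B(u,w)_j=u_{j-1}w_{j+1}+u_{j+1}w_{j-1}-u_{j-1}w_{j-2}-u_{j-2}w_{j-1}$. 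Moreover $A$ is a real circulant matrix, hence normal, so the eigenvectors~\eqref{eq:Lzeigenvector} of $A$ diagonalise $A^{\mathsf T}$ as well; choosing $q$ to be whichever of $v_l,v_{n-l}$ has eigenvalue with positive imaginary part, orthonormality of the $v_j$ gives simply $p=q$. Keeping this orientation straight is what fixes the overall sign of $\ell_1$, and it is the first place a careless computation goes astray.

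Next I would evaluate the bilinear form in the Fourier basis. A direct computation gives $B(v_a,v_b)=\tfrac1{\sqrt n}\,h(a,b)\,v_{a+b}$ (indices modulo $n$), with
\begin{equation*}
  h(a,b)=\rho_a^{-1}\rho_b+\rho_a\rho_b^{-1}-\rho_a^{-1}\rho_b^{-2}-\rho_a^{-2}\rho_b^{-1},\qquad \rho_j=\exp\!\bigl(-2\pi i\tfrac{j}{n}\bigr).
\end{equation*}
Substituting this into the formula and using $\lambda_0=-1$, every vector occurring is a multiple of $q$, the brackets reduce to scalars, and $\ell_1(l,n)$ becomes an explicit function of $h(l,l)$, $h(l,0)$, $h(l,n-l)$, $h(n-l,2l)$, $\lambda_{2l}$ and $\omega_0$. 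The rest is trigonometric bookkeeping; the identities that make it close up are $h(l,n-l)=-2f(l,n)$; $h(l,l)\,h(n-l,2l)=4\cos\tfrac{6\pi l}{n}-4=-8\sin^2\tfrac{3\pi l}{n}$, which is \emph{real}; $f(l,n)=2\sin\tfrac{3\pi l}{n}\sin\tfrac{\pi l}{n}$; $\omega_0=\cot\tfrac{\pi l}{n}$, valid because $l\neq\tfrac n3$ (so that in particular $q=v_{n-l}$); and, crucially, the cancellation $|2i\omega_0-\lambda_{2l}|^2=4\cos\tfrac{2\pi l}{n}-4\cos\tfrac{4\pi l}{n}+9$. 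Assembling these yields exactly the stated closed form for $\ell_1(l,n)$. This algebraic reduction, together with the orientation bookkeeping, is the main obstacle; everything else is routine.

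For the sign of $\ell_1(l,n)$, note that on the admissible range $0<\tfrac ln<\tfrac12$ the factors $\tfrac4n$, $\tan\tfrac{\pi l}{n}$ and $\sin^2\tfrac{3\pi l}{n}$ are positive, the last vanishing only at $\tfrac ln=\tfrac13$, which is excluded; and the denominator equals $4f(l,n)+9$, which lies in $\bigl(1,\tfrac{27}{2}\bigr]$ because $f(l,n)\in\bigl(-2,\tfrac98\bigr]$ (seen by writing $f$ as a downward parabola in $\cos\tfrac{2\pi l}{n}$), hence is positive; this also shows $2i\omega_0-\lambda_{2l}\neq0$. Therefore $\sign\ell_1(l,n)=\sign P\!\bigl(\tfrac{2\pi l}{n}\bigr)$ with $P(y)=5\cos y+8\cos 2y-2\cos 3y-8$. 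Putting $c=\cos y$ and using $\cos 2y=2c^2-1$, $\cos 3y=4c^3-3c$ turns $P$ into the cubic $p(c)=-8c^3+16c^2+11c-16$. I would then show $p$ has a unique zero $c_0\in(-1,1)$: $p(1)=3>0$ and $p(-1)=-3<0$, while $p'(c)=-24c^2+32c+11$ has exactly one root in $(-1,1)$, so $p$ decreases and then increases on that interval and is negative at the interior critical point, forcing a single sign change, from $-$ to $+$, as $c$ passes $c_0$. Since $\cos$ is decreasing on $(0,\pi)$, $P(y)>0$ for $0<y<y_0:=\arccos c_0$ and $P(y)<0$ for $y_0<y<\pi$. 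Taking $y=\tfrac{2\pi l}{n}$ gives the claimed dichotomy with threshold $\tfrac ln=\tfrac{y_0}{2\pi}$; since $c_0$ is characterised by $p(c_0)=0$, equivalently by $5\cos y_0+8\cos 2y_0-2\cos 3y_0-8=0$, a numerical evaluation confirms $\tfrac{y_0}{2\pi}\approx0.08825746$.
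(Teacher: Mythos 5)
Your route is essentially the paper's: Kuznetsov's invariant expression with $C\equiv 0$, the circulant (hence normal) Jacobian giving $p=q$ equal to a Fourier eigenvector, reduction of the quadratic form in the Fourier basis (your identity $B(v_a,v_b)=\tfrac{1}{\sqrt n}\,h(a,b)\,v_{a+b}$ is a packaged version of the paper's geometric-series computation, which kills all modes except $j=0$ and $j=n-2l$), the same closed form, and the same sign analysis of the numerator --- your substitution $c=\cos y$ leading to the cubic $-8c^3+16c^2+11c-16$ is equivalent to the paper's study of $L'(y)=\sin y\,(24\cos^2 y-32\cos y-11)$, with the same interior critical value $\tfrac23-\sqrt{65/72}\approx-0.283$, and your positivity bound $4f(l,n)+9>1$ for the denominator is a slightly sharper form of the paper's remark.

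There is one slip, and it sits exactly at the orientation bookkeeping you yourself flag as the danger point: although you state $q=v_{n-l}$ (correct, since $\lambda_{n-l}=+i\omega_0$), the scalars you then list, $h(l,l)$, $h(l,0)$, $h(n-l,2l)$ and $\lambda_{2l}$, are those of the computation with $q=v_l$. With $q=v_{n-l}$ one has $B(q,q)\propto v_{n-2l}$, so the resolvent is evaluated at $\lambda_{n-2l}=\bar\lambda_{2l}$, and the ``crucial cancellation'' must read $|2i\omega_0-\bar\lambda_{2l}|^2=4\cos\tfrac{2\pi l}{n}-4\cos\tfrac{4\pi l}{n}+9$; as written with $\lambda_{2l}$ it is false (for $n=12$, $l=1$ the left-hand side is $\approx 151.7$ while the right-hand side is $\approx 10.46$). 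This matters because the second contribution to $\ell_1$ requires $\RE\bigl[(2i\omega_0-\bar\lambda_{2l})^{-1}\bigr]$, whose numerator $1-f(2l,n)/f(l,n)$ you also need (the modulus alone is not enough); using the unconjugated eigenvalue changes $\RE\,\ell_{1,b}$ and hence $\ell_1$. The mismatch is harmless for the $h$-factors, since $h(l,n-l)=-2f(l,n)$ and the product equal to $4\cos\tfrac{6\pi l}{n}-4$ are real and conjugation-invariant, but the resolvent term is not. With that conjugation corrected, your assembly does reproduce the stated formula and the rest of your argument, including the transversality and sign discussion, matches the paper's proof in Section~\ref{sec:LC1}.
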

The number of possible Hopf bifurcations for a given dimension $n$ is exactly equal to the number of conjugate eigenvalue pairs which satisfy Lemma~\ref{lem:Bifevcrossing}. Using equation~\eqref{eq:Lzevconj}, we can count the number of such eigenvalue pairs by the number of eigenvalues with $0< j < \tfrac{n}{2}$, which gives the number $\lceil n/2-1\rceil$ (we need the ceiling-function here if $n$ is odd). However, as described above, if $n$ is a multiple of 3, then the eigenvalue pair with $j=\tfrac{n}{3}$ is not complex, so in this case the number of such eigenvalue pairs equals $\lceil n/2-2\rceil$. For the actual number of Hopf bifurcations, these numbers should be reduced by the number of Hopf-Hopf bifurcations.

We now restrict our attention to the parameter range $F \geq 0$. For $F=0$ the equilibrium $x_F$ is clearly stable. We are interested in the smallest value of $F>0$ at which the equilibrium bifurcates and becomes unstable. From the previous results we know that this must be either a Hopf or a Hopf-Hopf bifurcation.

\begin{proposition}\label{prop:firsthopf}
Let $n \geq 4$ be fixed. For $F>0$ the \emph{first} Hopf or Hopf-Hopf bifurcation occurs for the eigenpair with index
\begin{equation*}
    l_1(n) = \argmax_{0 < l < n/3} f(l,n),
\end{equation*}
which satisfies the bounds
\begin{equation*}
    \frac{n}{6} \leq l_1(n) \leq \frac{n}{4},
\end{equation*}
except for $n=7$, in which case we have to take $l_1=1$.

In particular, if the first bifurcation is a Hopf bifurcation, then this bifurcation is supercritical.
\end{proposition}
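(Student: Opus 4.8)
The plan is to prove three things in turn: (i) the first bifurcation for $F>0$ is carried by the eigenpair of index $l_1(n)=\argmax_{0<l<n/3}f(l,n)$; (ii) the bounds $\tfrac{n}{6}\le l_1(n)\le\tfrac{n}{4}$ for $n\neq7$; (iii) supercriticality when that bifurcation is of Hopf type. Claim (iii) will fall out of (ii) and Theorem~\ref{thm:HBif}, so the real work is (i)--(ii).

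For (i), I would first observe that by~\eqref{eq:Lzsimpleevn} the real part $\RE\lambda_l(F,n)=-1+Ff(l,n)$ is \emph{affine} in $F$; hence for $F>0$ the $l$-th eigenpair acquires nonnegative real part only if $f(l,n)>0$, and then precisely for $F\ge F_{\Hf}(l,n)=1/f(l,n)$ by Lemma~\ref{lem:Bifevcrossing}. Writing $\theta=2\pi l/n\in(0,\pi)$ and $u=\cos\theta$, one computes $f(l,n)=\cos\theta-\cos2\theta=(1-u)(2u+1)$, so $f(l,n)>0\iff u>-\tfrac12\iff l<n/3$ (and $f(n/3,n)=0$). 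Since $x_F$ is stable at $F=0$, the equilibrium first destabilises at $F=1/\max_{0<l<n/3}f(l,n)$, through the eigenpair(s) of index $l_1(n)=\argmax_{0<l<n/3}f(l,n)$, which is admissible for Lemma~\ref{lem:Bifevcrossing} because $0<l_1<n/3$; a unique maximiser gives a Hopf bifurcation, two (necessarily consecutive) maximising indices a Hopf-Hopf bifurcation.

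For (ii) the crux is that $l\mapsto f(l,n)$ is \emph{discretely unimodal} on $\{1,\dots,\lceil n/3\rceil-1\}$. I would establish this via a telescoping computation with product-to-sum identities, giving
\begin{equation*}
f(l+1,n)-f(l,n)=2\sin\tfrac{\pi}{n}\,\sin\tfrac{(2l+1)\pi}{n}\Bigl(4\cos\tfrac{\pi}{n}\cos\tfrac{(2l+1)\pi}{n}-1\Bigr);
\end{equation*}
for $0<l<n/3$ the first two factors are positive, so the sign of the increment is that of $4\cos\tfrac{\pi}{n}\cos\tfrac{(2l+1)\pi}{n}-1$, which strictly decreases in $l$, so $f(\cdot,n)$ increases and then decreases and $l_1(n)$ is the integer nearest $\tfrac{n}{2\pi}\arccos\bigl(\tfrac{1}{4\cos(\pi/n)}\bigr)$ (or, in a tie, either of the two nearest --- exactly the consecutive-index Hopf-Hopf case, cf.~\eqref{eq:LzHHbifcond1}). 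For $n\ge13$ it then suffices to sign the increment at the two boundary steps: at $l=\lceil n/6\rceil-1$ one has $\tfrac{(2l+1)\pi}{n}<\tfrac{\pi}{3}+\tfrac{\pi}{n}$ and $4\cos\tfrac{\pi}{n}\cos\bigl(\tfrac{\pi}{3}+\tfrac{\pi}{n}\bigr)=1+2\cos\bigl(\tfrac{\pi}{3}+\tfrac{2\pi}{n}\bigr)\ge1$ since $\tfrac{\pi}{3}+\tfrac{2\pi}{n}\le\tfrac{\pi}{2}$, so the increment is still positive and $l_1(n)\ge\lceil n/6\rceil\ge n/6$; symmetrically, at $l=\lfloor n/4\rfloor$ one has $\tfrac{(2l+1)\pi}{n}>\tfrac{\pi}{2}-\tfrac{\pi}{n}$ and $4\cos\tfrac{\pi}{n}\sin\tfrac{\pi}{n}=2\sin\tfrac{2\pi}{n}\le1$, so the increment is already negative and $l_1(n)\le\lfloor n/4\rfloor\le n/4$. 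The remaining dimensions $4\le n\le12$ I would check by evaluating $f(l,n)$ at the few admissible $l$; this confirms $\tfrac{n}{6}\le l_1(n)\le\tfrac{n}{4}$ in each case except $n=7$, where $f(1,7)>f(2,7)$ forces $l_1=1<\tfrac{7}{6}$.

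For (iii): when the first bifurcation is Hopf, $l_1(n)$ is the unique maximiser, and by (ii) we have $\tfrac{l_1(n)}{n}\in[\tfrac16,\tfrac14]$ for $n\neq7$ while $\tfrac{l_1(7)}{7}=\tfrac17$. Since $\tfrac{y_0}{2\pi}\approx0.0883<\tfrac16$, $\tfrac17>\tfrac{y_0}{2\pi}$ and $\tfrac{l_1(n)}{n}\le\tfrac14<\tfrac13$, in every case $\tfrac{l_1(n)}{n}\in\bigl(\tfrac{y_0}{2\pi},\tfrac12\bigr)\setminus\{\tfrac13\}$, so Theorem~\ref{thm:HBif} gives $\ell_1(l_1,n)<0$, i.e.\ the bifurcation is supercritical. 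I expect the main obstacle to be the unimodality step: deriving the telescoping identity cleanly, controlling the signs of its three factors uniformly on $0<l<n/3$, and making the two boundary estimates together with the finite verification over $4\le n\le12$ watertight, since that is exactly where the sole exception $n=7$ must be isolated.
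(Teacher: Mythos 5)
Your proposal is correct, but for the key step (the bounds $\tfrac{n}{6}\le l_1(n)\le\tfrac{n}{4}$) it takes a genuinely different route from the paper. The paper argues much more softly: it first notes that for every $n\geq 4$ except $n=7$ there is at least one integer $l\in[\tfrac{n}{6},\tfrac{n}{4}]$ (take $l=1$ for $n=4,5,6$, $l=2$ for $n=8,\dots,11$, and use that the interval has length $>1$ for $n\geq 12$), and then simply observes that $\tilde f(y)=\cos y-\cos 2y$ satisfies $\tilde f(y)\geq 1$ on $[\tfrac{\pi}{3},\tfrac{\pi}{2}]$ while $0<\tilde f(y)<1$ on $(0,\tfrac{\pi}{3})\cup(\tfrac{\pi}{2},\tfrac{2\pi}{3})$; hence the maximiser of $f(\cdot,n)$ over $0<l<\tfrac{n}{3}$ must lie in $[\tfrac{n}{6},\tfrac{n}{4}]$, with $n=7$ handled by the inequality $f(1,7)>f(2,7)$ already established in the proof of Lemma~\ref{lem:Bifevcrossing}. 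You instead prove discrete unimodality of $l\mapsto f(l,n)$ via the telescoping identity
\begin{equation*}
f(l+1,n)-f(l,n)=2\sin\tfrac{\pi}{n}\,\sin\tfrac{(2l+1)\pi}{n}\Bigl(4\cos\tfrac{\pi}{n}\cos\tfrac{(2l+1)\pi}{n}-1\Bigr),
\end{equation*}
(which I checked and is correct, as are your sign arguments and the two boundary estimates, valid already for $n\geq 12$), and you close the gap $4\le n\le 12$ by direct evaluation. This is heavier machinery and requires the finite check, but it buys strictly more information: unimodality of $f(\cdot,n)$, a near-explicit location of the maximiser, and the fact that a tie at the maximum (the Hopf-Hopf case for the first bifurcation, e.g.\ $n=12$ with $l\in\{2,3\}$) can only occur at two consecutive indices --- none of which the paper's comparison argument yields. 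Two small cautions: your parenthetical ``integer nearest $\tfrac{n}{2\pi}\arccos\bigl(\tfrac{1}{4\cos(\pi/n)}\bigr)$'' needs a rounding/tie convention (it is not needed for the bounds, so you could drop it), and when $l_1(n)$ is a two-element set the upper and lower bound arguments should be stated as applying to both elements, which your monotonicity of the bracketed factor does deliver. Parts (i) and (iii) of your argument coincide with the paper's reasoning.
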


\begin{remark}
For $n=12$ the first bifurcation is not a Hopf bifurcation, but a Hopf-Hopf bifurcation. In this case we have  $l_1(n) = \{2,3\}$ as shown in Corollary~\ref{cor:HHBif}; see Section~\ref{sec:Unfoldingn12} for a more detailed exposition of the dynamics in this case.
\end{remark}

\subsection{Travelling waves}\label{sec:travellingwaves}
A fluid is said to be hydrodynamically unstable when small perturbations of the flow can grow spontaneously, drawing energy from the mean flow. At a Hopf bifurcation an equilibrium loses its stability and gives birth to a periodic orbit. In the context of a fluid this can be interpreted as a steady flow becoming unstable to an oscillatory perturbation, such as a travelling or standing wave. Hopf bifurcations are found in many geophysical models as the first bifurcation which destabilizes a steady flow \cite{Lucarini07,Broer11,Dijkstra05,Sterk10}. In this section we show that periodic orbits of the Lorenz-96 model born at a Hopf bifurcation, as described in the previous section, can be interpreted as travelling waves.

Under the conditions of Theorem~\ref{thm:HBif} a Hopf bifurcation associated with the $l$-th eigenpair occurs at $F_{\Hf} = 1/f(l,n)$. In this case equation~\eqref{eq:Lzsimpleevn} gives
\begin{equation*}
    \lambda_l = -\omega_0 i    \qquad\text{with}\qquad
    \omega_0 = \frac{\cos\tfrac{\pi l}{n}}{\sin\tfrac{\pi l}{n}},
\end{equation*}
where we take $\omega_0$ by convention to be the absolute value of the imaginary part at the bifurcation value.
Observe that
\begin{equation*}
    P(t) = \sqrt{F - F_{\Hf}}\big(\cos(\omega_0 t)\RE(v_l) - \sin(\omega_0 t)\IM(v_l)\big).
\end{equation*}
is a periodic orbit of the Lorenz-96 model which is linearized around the trivial equilibrium $x_F$. For $F-F_{\Hf}>0$ sufficiently small the function $P(t)$ is a good approximation of the periodic orbit of the nonlinear system, which allows us to determine the physical properties of the wave.

Using equation~\eqref{eq:Lzeigenvector} gives the $j$-th component of $P(t)$ as
\begin{equation*}
    P_j(t) = \sqrt{\frac{F - F_{\Hf}}{n}}\cos\left(\omega_0 t - \frac{2\pi j}{n}l\right),
\end{equation*}
which is indeed the expression for a travelling wave. In this expression the integer $l$ is the wave number and $\tfrac{2\pi j}{n}$ plays the role of discrete longitude. The temporal frequency of the wave is given by $\omega_0$ which implies that its period is given by
\begin{equation}\label{eq:period}
    T = \tfrac{2\pi}{\omega_0} = 2\pi\tan\tfrac{\pi l}{n}.
\end{equation}
The level curves of $P_j(t)$ are given by the lines $\omega_0 t - \tfrac{2\pi j}{n}l = \text{constant}$, which are decreasing in the $(j,t)$-plane. This implies that waves travel in the direction of decreasing $j$, which is indeed observed in Figure~\ref{fig:hovmoller}.

Furthermore, it is easy to see that a small $l$ implies a small period and a large wave length (i.e.~a long and fast wave), whereas a larger $l$ gives a larger period and a smaller wave length (i.e.~a short and slow wave). Proposition~\ref{prop:firsthopf} shows that the wave number for the first bifurcation for $F\geq 0$, $l_1(n)$, increases linearly with $n$. In the limit $n\rightarrow\infty$ the following relations hold for the wave number $l_1(n)$ and the period:

\begin{proposition}\label{prop:period-infinity}
In the limit $n\to\infty$, the period of the periodic attractor born at the first Hopf bifurcation is given by
\begin{equation*}
    T_\infty = \lim_{n\to\infty} 2\pi\tan\left(\frac{\pi l_1(n)}{n}\right)
    = 2\pi\tan(\tfrac{1}{2}\arccos(\tfrac{1}{4})) \approx 4.867.
\end{equation*}
Similarly, the quotient of $n$ with the wave number $l_1(n)$ satisfies
\begin{equation*}
    \lim_{n\to\infty}\frac{n}{l_1(n)} = \frac{2\pi}{\arccos(\tfrac{1}{4})} \approx 4.767.
\end{equation*}
\end{proposition}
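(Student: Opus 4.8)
The plan is to convert the discrete optimisation defining $l_1(n)$ into a one–variable calculus problem on the continuum and then to show that the discrete maximiser converges to the continuous one. Write $h(\theta)=\cos\theta-\cos 2\theta$, so that by~\eqref{eq:Lzevfg} one has $f(l,n)=h\!\left(\tfrac{2\pi l}{n}\right)$ and hence $l_1(n)=\argmax_{0<l<n/3}h\!\left(\tfrac{2\pi l}{n}\right)$. First I would analyse $h$ on $(0,\tfrac{2\pi}{3})$: since $h'(\theta)=-\sin\theta+2\sin 2\theta=\sin\theta\,(4\cos\theta-1)$ and $\sin\theta>0$ there, $h$ is strictly increasing on $(0,\theta^{*}]$ and strictly decreasing on $[\theta^{*},\tfrac{2\pi}{3})$, where $\theta^{*}:=\arccos\tfrac14$ is the unique interior critical point; moreover $h''(\theta^{*})=-\cos\theta^{*}+4\cos 2\theta^{*}=-\tfrac14-\tfrac72=-\tfrac{15}{4}<0$. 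By Proposition~\ref{prop:firsthopf} we may assume $n\neq 7$ and $\tfrac16\le\tfrac{l_1(n)}{n}\le\tfrac14$, so that $\phi_n:=\tfrac{2\pi l_1(n)}{n}$ lies in the compact interval $[\tfrac{\pi}{3},\tfrac{\pi}{2}]\subset(0,\tfrac{2\pi}{3})$, which contains $\theta^{*}$.

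The heart of the argument is the claim $\phi_n\to\theta^{*}$. Because the admissible points $\tfrac{2\pi l}{n}$ are $\tfrac{2\pi}{n}$–spaced, some admissible $l$ satisfies $\bigl|\tfrac{2\pi l}{n}-\theta^{*}\bigr|\le\tfrac{2\pi}{n}$, and Lipschitz continuity of $h$ on $[\tfrac{\pi}{3},\tfrac{\pi}{2}]$ then gives $h(\phi_n)\ge h(\theta^{*})-\tfrac{C}{n}$ for some $C>0$. Conversely, since $h\in C^{2}$, $h''(\theta^{*})<0$, and $\theta^{*}$ is the unique maximiser of $h$ on $[\tfrac{\pi}{3},\tfrac{\pi}{2}]$, the quotient $\theta\mapsto\bigl(h(\theta^{*})-h(\theta)\bigr)/(\theta-\theta^{*})^{2}$ extends to a continuous, strictly positive function on that interval, so $h(\theta^{*})-h(\theta)\ge c\,(\theta-\theta^{*})^{2}$ uniformly for some $c>0$. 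Combining the two bounds yields $c\,(\phi_n-\theta^{*})^{2}\le\tfrac{C}{n}$, whence $\phi_n\to\theta^{*}$ with rate $O(n^{-1/2})$; if the discrete $\argmax$ happens to be non–unique the same estimate applies to each maximiser, so the conclusions below are unaffected.

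Finally I would substitute. By~\eqref{eq:period} the period is $T=2\pi\tan\bigl(\tfrac{\pi l_1(n)}{n}\bigr)=2\pi\tan\bigl(\tfrac{\phi_n}{2}\bigr)$, and since $\tfrac{\phi_n}{2}\to\tfrac{\theta^{*}}{2}\in(0,\tfrac{\pi}{2})$, a point of continuity of $\tan$, it follows that $T\to 2\pi\tan\bigl(\tfrac12\arccos\tfrac14\bigr)$; the half–angle identity $\tan^{2}\tfrac{\theta^{*}}{2}=\tfrac{1-\cos\theta^{*}}{1+\cos\theta^{*}}=\tfrac{3/4}{5/4}=\tfrac35$ turns this into $2\pi\sqrt{3/5}\approx 4.867$. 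Likewise $\tfrac{n}{l_1(n)}=\tfrac{2\pi}{\phi_n}\to\tfrac{2\pi}{\theta^{*}}=\tfrac{2\pi}{\arccos(1/4)}\approx 4.767$. I expect the only genuinely delicate step to be the discrete–to–continuous convergence $\phi_n\to\theta^{*}$, i.e.\ quantifying the quadratic decay of $h$ near its maximum and pairing it with the spacing of the admissible indices; the remaining steps are routine differentiation and trigonometric bookkeeping.
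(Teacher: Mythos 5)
Your proof is correct and follows essentially the same route as the paper: both reduce the claim to the convergence of $\tfrac{2\pi l_1(n)}{n}$ to $\arccos\tfrac14$, the maximiser of $\tilde f(y)=\cos y-\cos 2y$, and then substitute into equation~\eqref{eq:period}. The only difference is that you spell out the discrete-to-continuous convergence (spacing of admissible indices plus quadratic decay of $\tilde f$ near its maximum), a step the paper treats as immediate; your added detail is accurate and consistent with Proposition~\ref{prop:firsthopf}.
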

\begin{proof}
Both results follow immediately from the fact that
\begin{equation*}
    \lim_{n\to\infty} \frac{2\pi l_1(n)}{n} = \arccos(\tfrac{1}{4}),
\end{equation*}
by the maximum of $\tilde{f}(y) = \cos y-\cos 2y$, combined with equation~\eqref{eq:period}.
\end{proof}

Proposition~\ref{prop:period-infinity} explains the features that can be observed in Figures~\ref{fig:periods} and~\ref{fig:hovmoller}. The period of the waves tends to a finite limit as $n\to \infty$ whereas the spatial wave number is unbounded.

Figure~\ref{fig:PeriodComparison} compares the period of the periodic attractor for parameter values near and at the bifurcation value. The black curve is taken from Figure~\ref{fig:periods} and shows the period of the periodic attractor, computed numerically at the value $F=1.2$ for all $n\in[4,100]$. The red curve is the period of the periodic attractor computed via the theoretical formula~\eqref{eq:period} for $l=l_1(n)$ with $n\in[4,100]$, i.e.~exactly at the bifurcation value $F_{\Hf}(l_1,n)$. The difference is caused by the fact that the value $F=1.2$ used for the numerical computation is in most cases more than $0.2$ apart from the exact value $F_{\Hf}(l_1,n)$.

\begin{figure}[h!]
  \centering
  \includegraphics[width=\textwidth]{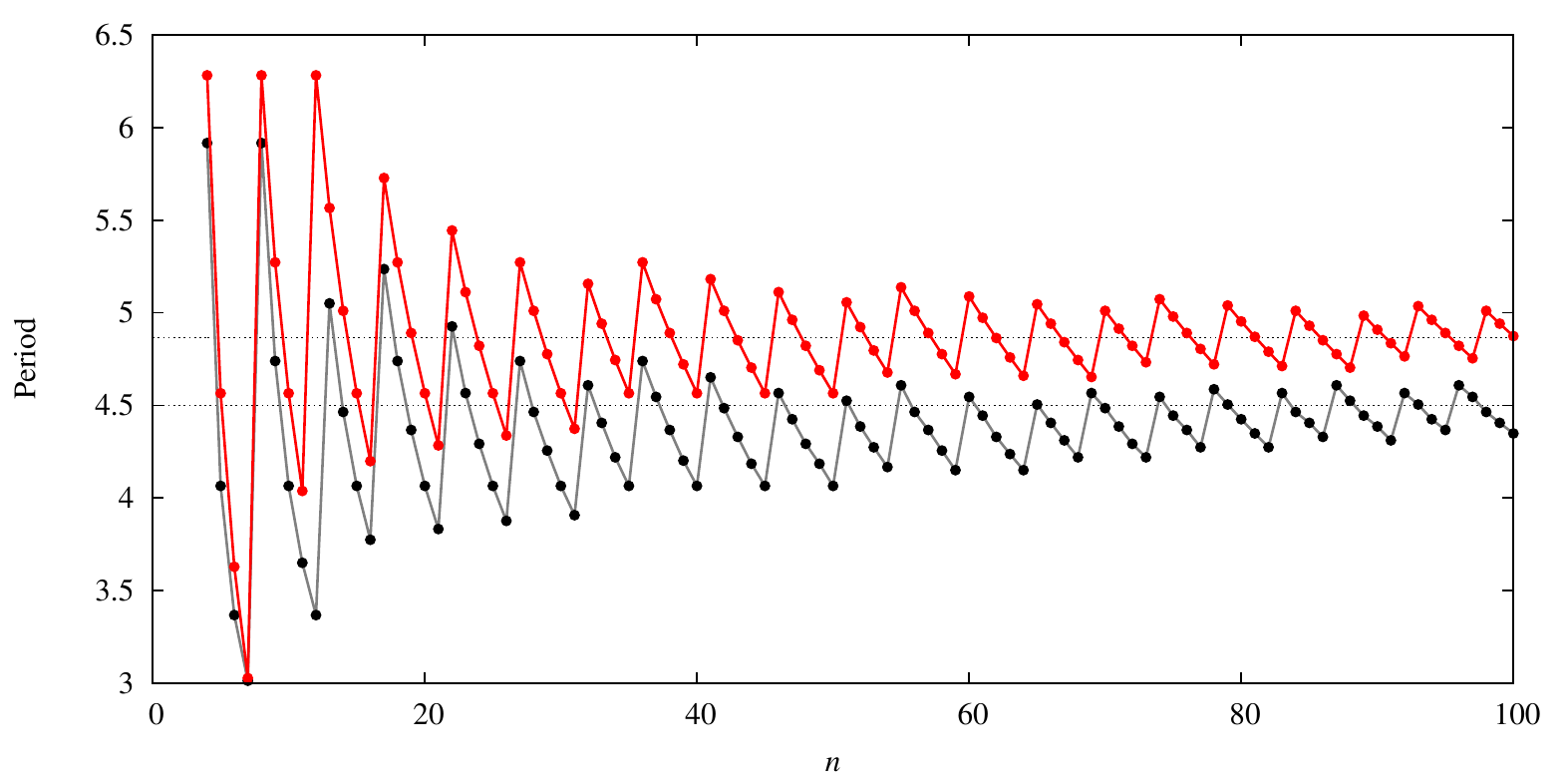}\\
  \caption{Black curve: the period of the periodic attractor detected at $F=1.2$ plotted as a function of $n$. Red curve: the period of the periodic attractor at the Hopf bifurcation as given by equation~\eqref{eq:period}. Note that both curves have a different horizontal asymptote.}
\label{fig:PeriodComparison}
\end{figure}

\subsection{Unfolding for the Hopf-Hopf bifurcation}\label{sec:unfolding}
In many systems one finds bifurcations of codimension less than or equal to the dimension $p$ of the parameter space that are subordinate to certain bifurcation points of codimension $p+1$. To study the qualitative dynamics around such codimension $p+1$ points, one can embed the system in a family of systems parameterised by $p+1$ parameters \cite{Wiggins03}. In this way, the codimension $p+1$ bifurcations act as organising centres of the bifurcation diagram.

Corollary~\ref{cor:HHBif} shows that for $n=12$ the trivial equilibrium $x_F$ loses stability through a codimension two Hopf-Hopf bifurcation at $F_{\HH}=1$. In order to unfold this codimension two  bifurcation completely an extra parameter is needed. In the following we propose an unfolding for general $n$ and take the codimension two Hopf-Hopf bifurcation as an organising centre for this family of systems. Thereafter we will describe the case $n=12$ in more detail and show the role of the Hopf-Hopf bifurcation as organising centre. In section~\ref{sec:n=12} we will show by numerical computations how it dominates the dynamics in its neighbourhood and even influences the phase space for larger parts of the parameter space. A similar approach is taken in \cite{BroerSaleh05,BroerSaleh07,Broer02}, showing the existence and influence of several codimension three, respectively two, points that act as an organising centre.

We choose to add a Laplace-like diffusion term multiplied by a new parameter $G$ to our original equations~\eqref{eq:Lorenz96eq} to obtain the two-parameter system for general dimension $n$ with equations
\begin{equation}\label{eq:Lorenz96eqExt}
  \dot{x}_j =  x_{j-1}(x_{j+1}-x_{j-2}) - x_j + G (x_{j-1}-2x_j+x_{j+1}) + F, \qquad j=1, \ldots, n,
\end{equation}
and keep the boundary condition~\eqref{eq:Lorenz96bc} as it is. The parameter value $G=0$ returns the original Lorenz-96 model.

\subsubsection{General dimensions}
\paragraph{Trapping region}We start the exploration of the two-parameter Lorenz-96 system with the following observation:
\begin{proposition}[Trapping Region]\label{prop:Exttrappingregion}
The two-parameter system~\eqref{eq:Lorenz96eqExt} has a trapping region for any dimension $n\in\Nat$ and for all $F\in\Real$ and $G> -\tfrac{1}{4}$.
\end{proposition}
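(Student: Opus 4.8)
The plan is to exhibit an explicit ``energy'' function whose sublevel sets eventually trap the flow, following the standard recipe for dissipative systems with energy-preserving quadratic nonlinearity. Let $E(x) = \tfrac{1}{2}\sum_j x_j^2$ be the total energy. First I would compute $\dot E$ along solutions of~\eqref{eq:Lorenz96eqExt}. The key structural fact is that the quadratic terms $x_{j-1}(x_{j+1}-x_{j-2})$ contribute nothing: summing $x_j \cdot x_{j-1}(x_{j+1}-x_{j-2})$ over $j$ (with indices mod $n$) telescopes to zero by the same cancellation that holds in the original Lorenz-96 model — this is exactly the property quoted in the ``Trapping Region'' paragraph of Section~\ref{sec:General}. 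So the quadratic part drops out regardless of $G$, and I am left with
\begin{equation*}
  \dot E = -\sum_j x_j^2 + G\sum_j x_j(x_{j-1}-2x_j+x_{j+1}) + F\sum_j x_j.
\end{equation*}

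Next I would show the diffusion term is dissipative (or at worst mildly so) when $G > -\tfrac14$. Using the circulant/summation-by-parts identity $\sum_j x_j(x_{j-1}-2x_j+x_{j+1}) = -\sum_j (x_{j+1}-x_j)^2 \le 0$ for $G\ge 0$ this term only helps; the real work is the range $-\tfrac14 < G < 0$, where I bound $\big|\sum_j x_j(x_{j-1}-2x_j+x_{j+1})\big| \le 4\sum_j x_j^2$ (the Laplacian matrix has spectral norm at most $4$, its eigenvalues being $2-2\cos(2\pi k/n)\in[0,4)$). Hence $G\sum_j x_j(x_{j-1}-2x_j+x_{j+1}) \ge -4|G|\sum_j x_j^2 = 4G\sum_j x_j^2$ when $G<0$, so
\begin{equation*}
  \dot E \le (-1 + 4G)\sum_j x_j^2 + F\sum_j x_j = (4G-1)\,2E + F\sum_j x_j,
\end{equation*}
and $4G-1<0$ precisely under the hypothesis $G>-\tfrac14$. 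Finally I would absorb the linear forcing term via Cauchy–Schwarz, $|F\sum_j x_j| \le |F|\sqrt{n}\,\sqrt{2E}$, giving $\dot E \le 2(4G-1)E + |F|\sqrt{2nE}$, which is strictly negative once $E$ is large enough — concretely once $\sqrt{2E} > |F|\sqrt{n}/(1-4G)$. Therefore any ball $\{E \le R\}$ with $R$ exceeding this threshold is forward invariant and absorbing, which is the desired trapping region; one can also close it off to a compact set and note it attracts all orbits.

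The only mildly delicate step is the sharp constant $4$ for the discrete Laplacian on the cycle, and checking that $-\tfrac14$ is exactly the borderline — but this is just reading off the eigenvalues $2-2\cos(2\pi k/n)$, which lie in $[0,4)$ for every $n$, so $4|G|<1$ is both what the computation needs and what the statement asserts. Everything else is routine summation-by-parts on the circulant index set plus one application of Cauchy–Schwarz; no obstacle of substance arises. (A cosmetic alternative is to complete the square in $\dot E$ directly rather than splitting off the forcing term, which yields the same threshold.)
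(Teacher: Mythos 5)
Your strategy is the right one and, modulo one repair, coincides with the paper's proof: the paper also takes $E=\tfrac12\sum_j x_j^2$, notes that the quadratic terms cancel, writes the remaining part as $\langle x,Ax\rangle+F\sum_j x_j$ with $A$ the symmetric circulant matrix whose eigenvalues are $-1-2G\left(1-\cos\tfrac{2\pi j}{n}\right)$, bounds $\langle x,Ax\rangle$ by the largest eigenvalue times $\|x\|^2$ (your splitting of $-\sum_j x_j^2$ plus a spectral-norm bound on the discrete Laplacian is the same estimate), and finishes with Cauchy--Schwarz to get an absorbing ball of radius $-\sqrt{n}|F|/\lambda^A_{\max}$.

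However, as written your treatment of the range $-\tfrac14<G<0$ contains a genuine sign error. You derive the \emph{lower} bound $G\sum_j x_j(x_{j-1}-2x_j+x_{j+1})\ge 4G\sum_j x_j^2$ and then use it as if it were an \emph{upper} bound on a term that enters $\dot E$ with a plus sign; the resulting inequality $\dot E\le(-1+4G)\sum_j x_j^2+F\sum_j x_j$ is false for $G<0$ (test a constant vector $x_j\equiv c$: the diffusion term vanishes identically, while your bound subtracts the positive quantity $4|G|nc^2$), and the accompanying claim that ``$4G-1<0$ precisely under the hypothesis $G>-\tfrac14$'' is also incorrect, since $4G-1<0$ iff $G<\tfrac14$. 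The correct step is the \emph{upper} bound $G\sum_j x_j(x_{j-1}-2x_j+x_{j+1})=|G|\sum_j(x_{j+1}-x_j)^2\le 4|G|\sum_j x_j^2=-4G\sum_j x_j^2$ for $G<0$, which gives $\dot E\le -(1+4G)\,2E+|F|\sqrt{2nE}$, and $1+4G>0$ is exactly the hypothesis $G>-\tfrac14$; with this coefficient the rest of your argument (large-$E$ negativity, forward-invariant absorbing ball) goes through unchanged. One further small correction: the eigenvalues $2-2\cos(2\pi k/n)$ of the cycle Laplacian attain the value $4$ when $n$ is even (at $k=n/2$), so they lie in $[0,4]$ rather than $[0,4)$; this does not harm your estimate (you only need the spectral norm $\le 4$), and it is in fact why $-\tfrac14$ is the sharp uniform threshold, whereas for odd $n$ slightly smaller $G$ would still do, as the paper notes.
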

This result means that we can expect an attractor to exist in the region with $G> -\tfrac{1}{4}$, but in the half-plane below this line an attractor does not necessarily exist. Therefore, the results for the two-parameter system are relevant only for parameter values $G> -\tfrac{1}{4}$.

\paragraph{Equilibrium and Hopf bifurcations}The two-parameter system~\eqref{eq:Lorenz96eqExt} has the same trivial equilibrium~\eqref{eq:Lztriveq} for all $F, G \in \Real$. The Jacobian matrix at this equilibrium is a circulant matrix with first row equal to
  \begin{equation*}
    (c_0, c_1, \dots, c_{n-1}) = (-1-2G, F+G, 0, \ldots, 0, -F, G).
  \end{equation*}
We can express its eigenvalues $\kappa_j$ with $j=0,\dots,n-1$ easily in terms of $F$ and $G$ \cite{Gray06}:
  \begin{align}\label{eq:LzExtevn}
    \hspace{-2.3em}
    \kappa_j(F,G,n) &= \sum_{k=0}^{n-1} c_k \rho_j^k\nonumber\\
        &= -1-2G + (F+G)\rho_j^1 - F\rho_j^{n-2} + G\rho_j^{n-1}\nonumber\\
        &= -1-2G + (F+G)\exp\left(- \tfrac{2\pi j}{n}i\right) - F\exp\left(\tfrac{4\pi j}{n}i\right) + G\exp\left(\tfrac{2\pi j}{n} i\right)\nonumber\\
        &= -1 - 2G\left(1-\cos\tfrac{2\pi j}{n}\right) + F f(j,n) + F g(j,n) i,
  \end{align}
where $f$ and $g$ are given by equation~\eqref{eq:Lzevfg}. The corresponding eigenvector $v_j$ is again given by~\eqref{eq:Lzeigenvector}. Note that equation~\eqref{eq:Lzevconj} holds for $\kappa$ instead of $\lambda$ with similar restrictions, that is, if $j=\tfrac{n}{2}$, we have $\kappa_{n/2} = -1-2F-4G$; and if $j=\tfrac{n}{3}, \tfrac{2n}{3}$, then we have $\kappa_j = -1-3G$. This shows that the case $l=\tfrac{n}{3}$ can not give a Hopf bifurcation and that the additional restriction $0<l<\tfrac{n}{2}$ provides all possible complex eigenvalue pairs. Note also that at $F=0$ all eigenvalues are real, so no Hopf bifurcation is then possible.

The following lemma demonstrates that the two-parameter system~\eqref{eq:Lorenz96eqExt} can exhibit as many different Hopf bifurcations as system~\eqref{eq:Lorenz96}.

\begin{lemma}\label{lem:HopfExt}
Let $n\geq 4$ and $l\in\Nat$ such that $0<l<\tfrac{n}{2}, l\neq\tfrac{n}{3}$, then the trivial equilibrium $x_F$ of system~\eqref{eq:Lorenz96eqExt} exhibits a Hopf bifurcation on the linear bifurcation curves
  \begin{equation}\label{eq:L96ExtHopf}
    G = H_l(F,n) = \frac{F f(l,n)-1}{2(1-\cos\tfrac{2\pi l}{n})},
  \end{equation}
where $F\in\Real\backslash\{0\}$.
\end{lemma}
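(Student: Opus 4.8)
The plan is to verify the standard nondegeneracy conditions for a Hopf bifurcation directly from the closed form~\eqref{eq:LzExtevn} of the eigenvalues, in the same spirit as the proof of Lemma~\ref{lem:Bifevcrossing}. Split $\kappa_l$ into real and imaginary parts,
\[
  \RE(\kappa_l)= -1 - 2G\Bigl(1-\cos\tfrac{2\pi l}{n}\Bigr) + F f(l,n), \qquad
  \IM(\kappa_l)= F\,g(l,n).
\]
Since $0<l<\tfrac{n}{2}$ we have $\cos\tfrac{2\pi l}{n}\neq 1$, so $1-\cos\tfrac{2\pi l}{n}>0$ and the equation $\RE(\kappa_l)=0$ has the unique solution $G=H_l(F,n)$ of~\eqref{eq:L96ExtHopf}; as a function of $F$ this is affine, so its graph is a line, which accounts for the ``linear bifurcation curves'' in the statement.

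Next I would confirm that along this curve $\kappa_l$ is a genuinely complex eigenvalue, i.e.\ $\IM(\kappa_l)\neq 0$. Using the sum-to-product identity $g(l,n)=-\sin\tfrac{2\pi l}{n}-\sin\tfrac{4\pi l}{n}=-2\sin\tfrac{3\pi l}{n}\cos\tfrac{\pi l}{n}$, the factor $\cos\tfrac{\pi l}{n}$ vanishes only for $l=\tfrac{n}{2}$ and, in the range $0<l<\tfrac{n}{2}$, the factor $\sin\tfrac{3\pi l}{n}$ vanishes only for $l=\tfrac{n}{3}$; both are excluded by hypothesis, so $g(l,n)\neq 0$ and hence $\IM(\kappa_l)=F\,g(l,n)\neq 0$ because $F\neq 0$. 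Since~\eqref{eq:Lzevconj} holds for $\kappa$, the partner $\kappa_{n-l}$ is the complex conjugate, so on the curve we have a purely imaginary conjugate pair $\pm i\omega_0$ with $\omega_0=|F\,g(l,n)|>0$. For transversality I would fix $F$ and move in the $G$-direction: $\tfrac{\partial}{\partial G}\RE(\kappa_l)=-2\bigl(1-\cos\tfrac{2\pi l}{n}\bigr)\neq 0$, so the pair crosses the imaginary axis with nonzero speed.

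The one point that needs a genuine (if still short) argument is that, at a typical point of the curve $G=H_l(F,n)$, no other eigenvalue sits on the imaginary axis, so that the crossing is of codimension one and the classical Hopf theorem applies. For this I would substitute $G=H_l(F,n)$ into $\RE(\kappa_j)$ for an index $j\notin\{l,n-l\}$; the result is again an affine function of $F$, and a direct computation shows its constant term equals $-1+\dfrac{1-\cos(2\pi j/n)}{1-\cos(2\pi l/n)}$, which vanishes precisely when $\cos\tfrac{2\pi j}{n}=\cos\tfrac{2\pi l}{n}$, i.e.\ when $j\in\{l,n-l\}$. Hence for $j\notin\{l,n-l\}$ the function $\RE(\kappa_j)\big|_{G=H_l}$ is not identically zero, so it vanishes for at most isolated values of $F$ (and those isolated points are exactly the Hopf--Hopf points picked out by Theorem~\ref{thm:HHBif1}); in particular $\pm i\omega_0$ is a simple pair of eigenvalues away from that discrete set. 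Combining this with the crossing and transversality already established yields a Hopf bifurcation all along $G=H_l(F,n)$, which is the assertion of the lemma.

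All the computations involved are elementary trigonometric manipulations of~\eqref{eq:LzExtevn}; the only place where a little structural insight is needed is the last step, and there it is precisely the affine dependence of $\RE(\kappa_j)\big|_{G=H_l}$ on $F$ --- inherited from the linearity of~\eqref{eq:LzExtevn} in $F$ and $G$ --- that does the work of ruling out accidental extra crossings.
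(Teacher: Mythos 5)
Your argument is correct and essentially the same as the paper's: both read off the curve from the explicit circulant eigenvalues~\eqref{eq:LzExtevn} by setting $\RE(\kappa_l)=0$, use $F\neq 0$ together with $l\neq\tfrac{n}{2},\tfrac{n}{3}$ to guarantee a nonzero purely imaginary conjugate pair, and verify a transversal crossing (the paper differentiates in $F$, giving $f(l,n)\neq 0$, while you differentiate in $G$; either works since the Hopf locus is a codimension-one curve in the $(F,G)$-plane). Your closing step ruling out additional imaginary-axis eigenvalues away from isolated values of $F$ is a refinement the paper leaves implicit; the only small caveat is your parenthetical identification of those exceptional points with the Hopf--Hopf points of Theorem~\ref{thm:HHBif1} --- in the two-parameter system they are the intersections of distinct Hopf lines (or points where a real eigenvalue such as $\kappa_{n/2}$ or $\kappa_{n/3}$ vanishes, giving fold--Hopf degeneracies), not literally the $G=0$ points of that theorem.
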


The Hopf bifurcation points are now turned into straight lines in the $(F,G)$-plane. Along these curves~\eqref{eq:L96ExtHopf} it is possible to determine the type of the bifurcation by computing the first Lyapunov coefficient explicitly in a similar manner as done in the proof of Theorem~\ref{thm:HBif} (see Section~\ref{sec:LC1}), but we will not repeat the procedure here.

\paragraph{Hopf-Hopf bifurcations}It is obvious that the intersections of the Hopf-lines cause Hopf-Hopf bifurcations. One can find all Hopf-Hopf bifurcation points of the trivial equilibrium by equating two Hopf-lines from formula~\eqref{eq:L96ExtHopf} with different $l$. Under the assumption that all nondegeneracy conditions are satisfied, the truncated normal form for a Hopf-Hopf bifurcation reads \cite{Kuznetsov04}
\begin{equation}\label{eq:HHnormalform}
\left\{\begin{aligned}
    \dot{\xi}_2 &= \xi_2(\mu_2 - \sigma \xi_2 - \theta \xi_3 + \Theta\xi_3^2),\\
    \dot{\xi}_3 &= \xi_3(\mu_3 - \sigma \delta\xi_2 - \xi_3 + \Delta\xi_2^2),\\
    \dot{\ph}_2 &= \omega_2,\\
    \dot{\ph}_3 &= \omega_3,
    \end{aligned}\right.
\end{equation}
after a suitable choice of phase variables. (Note that we use indices 2 and 3 for these phase variables which is most convenient for our discussion of the case $n=12$ in Section~\ref{sec:Unfoldingn12}.) Here, $\mu_j$ is defined as $\mu_j := \RE\kappa_j$, $\sigma = \pm 1$ and $\theta, \delta, \Theta, \Delta$ are other normal form coefficients. The sign $\sigma$ mainly determines the type of behaviour near the Hopf-Hopf point \cite{Kuznetsov04}. In any case, two Neimark-Sacker bifurcation (NS) curves emanate from the Hopf-Hopf point. The directions of these NS-curves depend on $\theta$ and $\delta$ and can be computed up to first order via the real part of the eigenvalues at the Hopf-Hopf point \cite{Kuznetsov04}. Note that the type of dynamics we have around the Hopf-Hopf bifurcation point does not depend on the choice of unfolding~\eqref{eq:Lorenz96eqExt}, since the normal form coefficients should be evaluated at the bifurcation point.

\subsubsection{Unfolding for $n = 12$}\label{sec:Unfoldingn12}
In this section we describe the interesting situation of dimension $n=12$ more explicitly, using the results we obtained for general dimensions. Therefore, consider system~\eqref{eq:Lorenz96eqExt} and let $n=12$. The eigenvalues of the Jacobian matrix belonging to the trivial equilibrium $x_F$ are given by equation~\eqref{eq:LzExtevn}.

A Hopf bifurcation for the $l$-th eigenvalue pair (with $0<l<6, l\neq4$) occurs along the Hopf-lines in equation~\eqref{eq:L96ExtHopf}. So, we obtain explicitly the following Hopf bifurcation curves as a function of $F\in\Real\setminus\{0\}$:
\begin{equation}\label{eq:Hopfcurvesn12}
\begin{aligned}
    H_1(F,12) &= \frac{2+(1-\sqrt{3})F}{2\sqrt{3}-4};\\
    H_2(F,12) &= F-1;\\
    H_3(F,12) &= \tfrac{1}{2}(F-1); \\
    H_5(F,12) &= -\frac{2+(1+\sqrt{3})F}{2\sqrt{3}+4}.
\end{aligned}
\end{equation}
The curves for $l=2,3$ intersect each other at $(F,G)=(1,0)$, which is the Hopf-Hopf bifurcation point we discovered in the original system. It is easy to see that for $G=0$ this is the first bifurcation of $x_F$ one encounters by increasing the parameter $F$, since the only Hopf bifurcation with positive $F_{\Hf}$-value has $F_{\Hf}(1,12) \approx 2.732051 > F_{\HH}$. Observe that there is no resonance present at the Hopf-Hopf point (since there $\omega_2 = \IM(\kappa_2(1,H_2(1,12),12)) = \sqrt{3}$ and $\omega_3 = \IM(\kappa_3(1,H_3(1,12),12)) = 1$). Furthermore, in our particular situation, numerical computation of the normal form coefficients using \textsc{MatCont} \cite{Dhooge11} yields the following values
\begin{equation*}
 (\sigma, \vartheta, \delta, \Theta, \Delta) = (1, 1.414, 1.258, -0.200, 0.678),
\end{equation*}
showing that the bifurcation is indeed nondegenerate and that the normal form~\eqref{eq:HHnormalform} is valid.

From the value of $\sigma$ it follows that the dynamics of system~\eqref{eq:Lorenz96eqExt} is of ``type I in the simple case'' as described by \cite{Kuznetsov04}. This means that the two NS-curves are the only bifurcation curves that emanate from the Hopf-Hopf point and in between these curves there exists a region in the $(F,G)$-plane where two stable periodic orbits coexist with an unstable 2-torus. These NS-curves correspond to the limit cycles with $l=2,3$ and are approximated by, respectively,
\begin{align*}
    \mu_3 &= \delta \mu_2 + \mathcal{O}(\mu_2^2), \quad \mu_2 >0,\\
    \mu_2 &= \theta \mu_3 + \mathcal{O}(\mu_3^2), \quad \mu_3 >0.
\end{align*}
Removing higher order terms and solving for $G$ gives the following linear curves in $F$:
\begin{equation}\label{eq:NScurvelinearn12}
\begin{aligned}
    G = T_2(F) &= \frac{1-\delta}{2-\delta}(F-1),\\
    G = T_3(F) &= \frac{1-\theta}{1-2\theta}(F-1).
\end{aligned}
\end{equation}
These lines are tangent to the real NS-curve at the Hopf-Hopf point.

Figure~\ref{fig:Lorenz96n12ExtLocalBifDiagram} displays the local bifurcation diagram with the Hopf-lines and approximated NS-curves for $l=2,3$ together with the phase portraits for each region. In the next section, we verify these results numerically.
  \begin{figure}[ht!]
  \centering
     \includegraphics[width=0.6\textwidth]{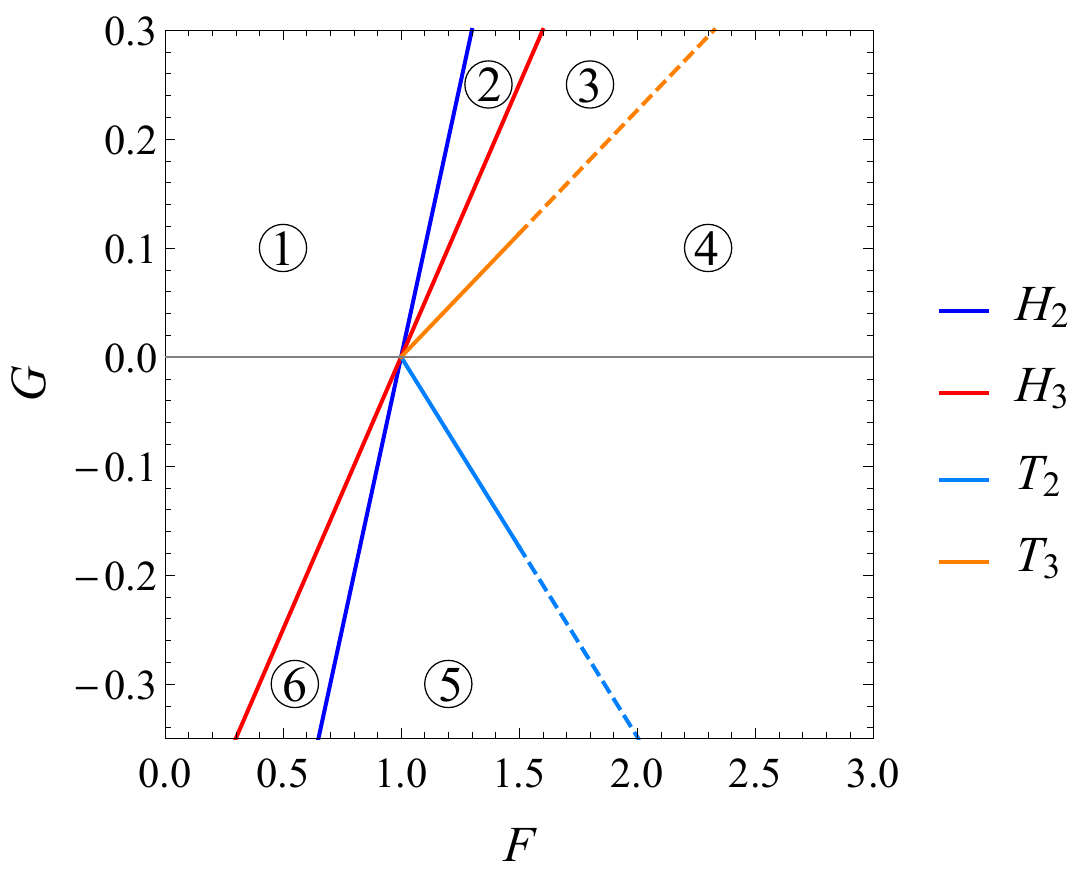}\\
    \vspace{1em}
    \includegraphics[width=0.24\textwidth]{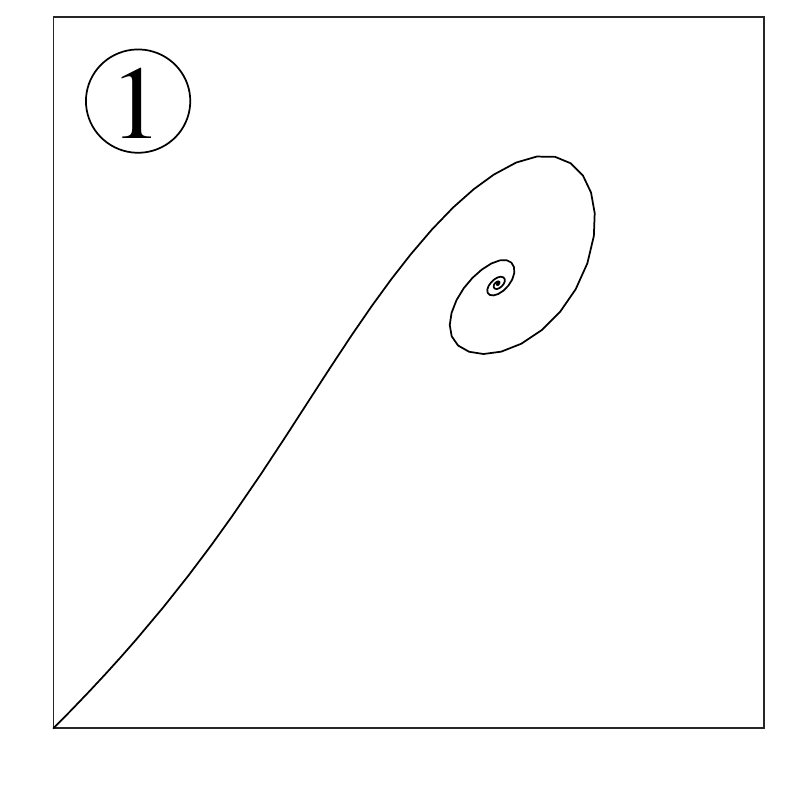}
    \includegraphics[width=0.24\textwidth]{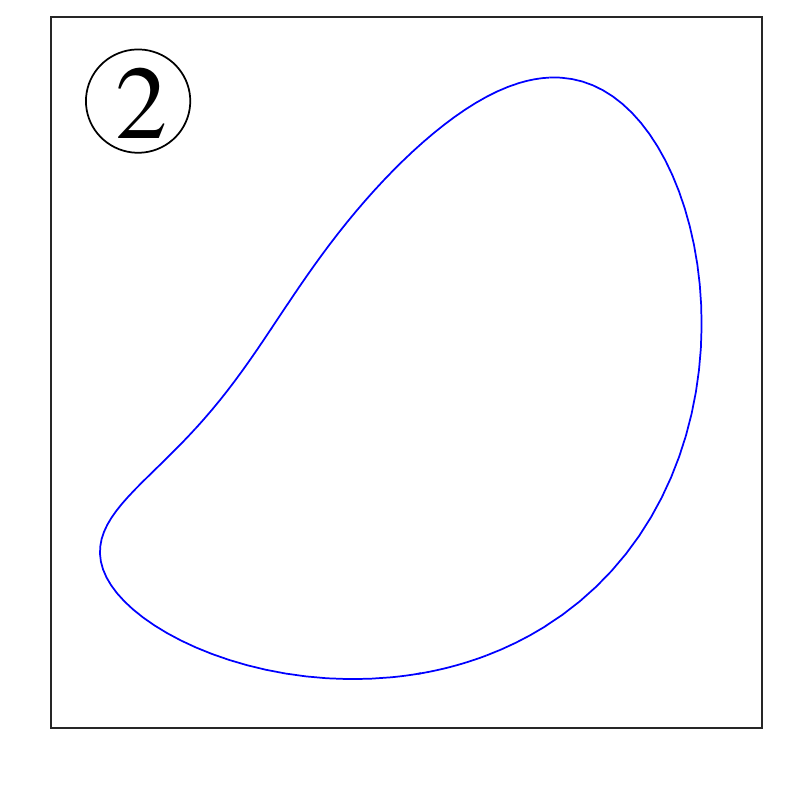}
    \includegraphics[width=0.24\textwidth]{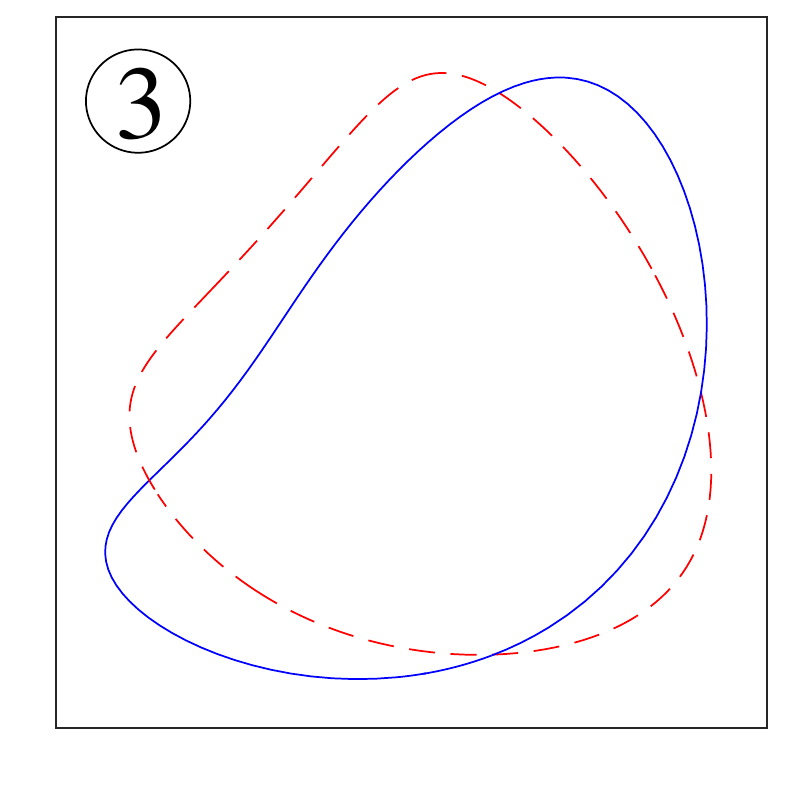}\\
    \includegraphics[width=0.24\textwidth]{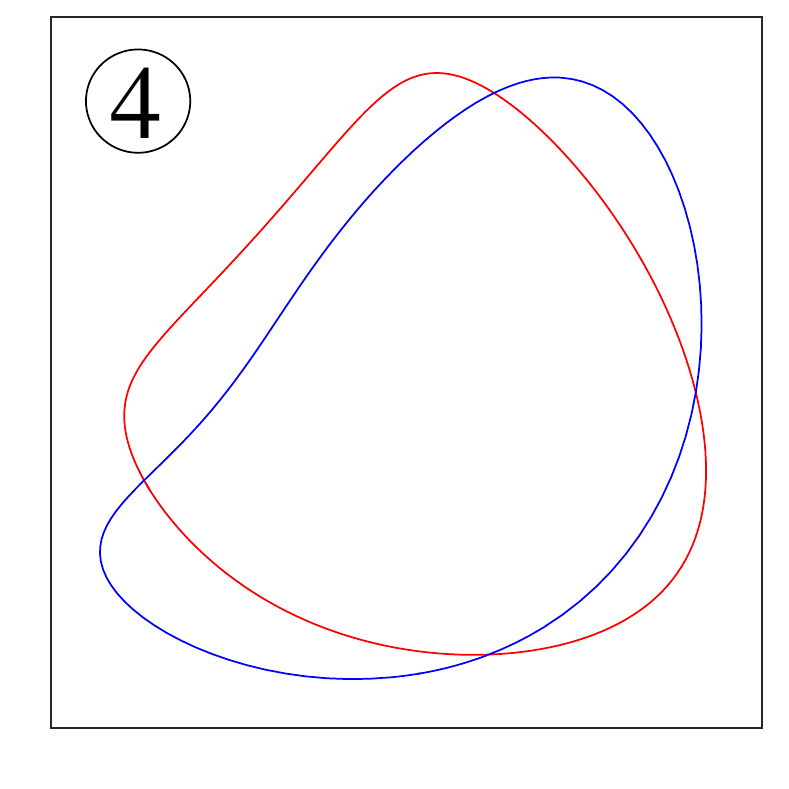}
    \includegraphics[width=0.24\textwidth]{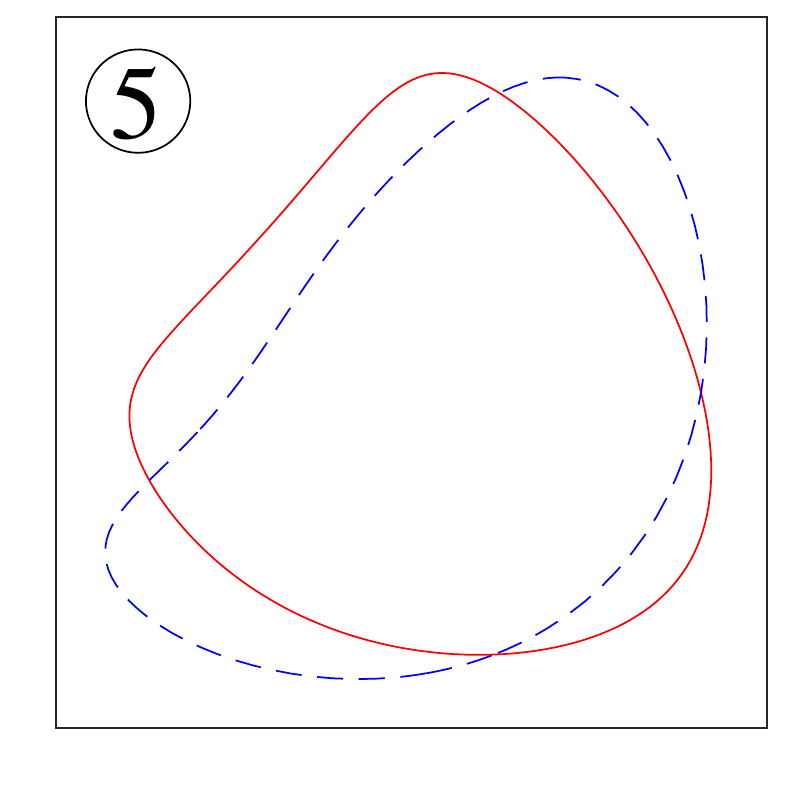}
    \includegraphics[width=0.24\textwidth]{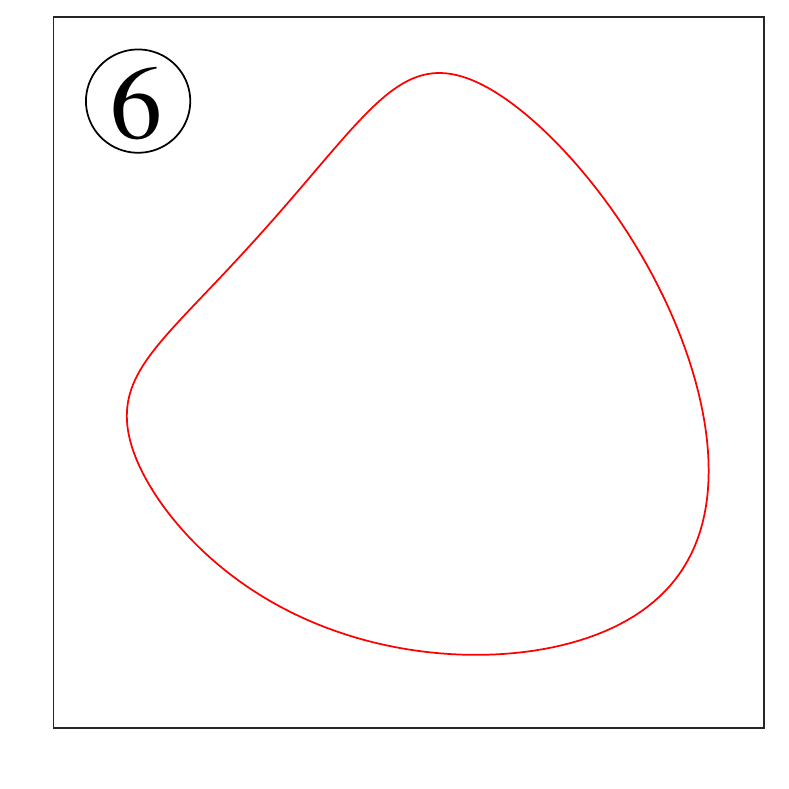}
    \caption{Local bifurcation diagram for $n=12$ near the Hopf-Hopf bifurcation point (top panel). The lines $H_2$ and $H_3$ are the exact Hopf bifurcation curves from~\eqref{eq:Hopfcurvesn12}, whereas the lines $T_2$ and $T_3$ are the linear approximations of the Neimark-Sacker curves~\eqref{eq:NScurvelinearn12}. Compare with the numerical results presented in Figures~\ref{fig:Lorenz96n12ExtBifDiag1C} and~\ref{fig:Lorenz96n12Lyapdiagram}.
    The phase portraits (lower panels) show the dynamics in each of the domains in the top figure. In region 1 there is only one stable equilibrium. In all other domains at least one stable periodic orbit exists. Here, a blue orbit corresponds to wave number 2, a red orbit corresponds to wave number 3, while a dashed line means that the orbit is unstable. Moreover, besides the two stable periodic attractors in region 4 an unstable 2-torus is present, which is not shown in the corresponding phase portrait. Note also that the phase portrait of region 5, respectively 6, is similar to that of region 3, respectively 2, though the stable attractor has wave number 3 instead of 2.}\label{fig:Lorenz96n12ExtLocalBifDiagram}
  \end{figure}

\section{Dynamics beyond the Hopf and Hopf-Hopf bifurcation}\label{sec:NumericalResults}
In Section~\ref{sec:AnalyticalResults} we have proven that the trivial equilibrium $x_F$ of the Lorenz-96 model~\eqref{eq:Lorenz96} loses stability through either a supercritical Hopf or a Hopf-Hopf bifurcation for all dimensions $n \geq 4$. At these bifurcations a periodic attractor is born which has the physical interpretation of a travelling wave (see Section~\ref{sec:travellingwaves}).

In this section we explore the dynamics of the Lorenz-96 model numerically for dimensions up to $n=100$ and beyond the first Hopf bifurcation. Our emphasis is on the bifurcations through which the periodic attractor loses stability. A natural question is to what extent these bifurcations depend on the dimension $n$. Moreover, for a few dimensions we comment on routes to chaos using tools such as continuation, integration, Poincar\'e sections and Lyapunov exponents. The numerical analysis was carried out using mainly the original Lorenz-96 model~\eqref{eq:Lorenz96}. The two-parameter model, however, turns out to be useful to explain the features observed in the one-parameter model. Whenever the two-parameter system~\eqref{eq:Lorenz96eqExt} was used, this is stated explicitly. Otherwise, $G$ is assumed to be equal to $0$. (Recall that we retrieve the original model from the two-parameter system by setting $G=0$.)

An overview of the bifurcations for several dimensions is presented in the diagram in Figure~\ref{fig:bifLC}. As can be seen from this diagram there are various routes to chaos. We will discuss a few of them below. Moreover, for all dimensions shown, except for $n=4$, chaos sets in for $F\in(3,7)$.

\subsection{Dimension $n = 4$}
In the four dimensional Lorenz-96 model there is only one Hopf bifurcation at $F_{\Hf}(1,4)=1$. Continuing the periodic attractor originating from this bifurcation in $F$ and plotting its period against $F$ gives the diagram in Figure~\ref{fig:Lorenz96n4LCF-Period+Stability}. The original periodic orbit disappears through a fold bifurcation at $F_{SN} \approx 11.83823$. Chaos is observed for parameter values $F\geq 11.84$. Figure~\ref{fig:Lorenz96n4LCF1190+1183} compares the periodic attractor for $F=11.83$ with the chaotic attractor for $F=11.9$, while Figure~\ref{fig:Lorenz96n4LCFTimes} shows time series of the first variable for both parameter values. Observe that the  dynamics for $F=11.9$ alternates between approximate periodic behaviour and chaotic behaviour. This is the classical type 1 intermittency scenario as described in \cite{Pomeau80,Eckmann81}. Note that for intermittency we not only need an attractor that has disappeared, but we also need the global dynamics to be such that a typical evolution recurrently visits the part of state space where the attractor disappeared. In our case, this global dynamics might be provided by a heteroclinic structure, as we will show below.

\begin{figure}[h!]
  \centering
    \includegraphics[width=\textwidth]{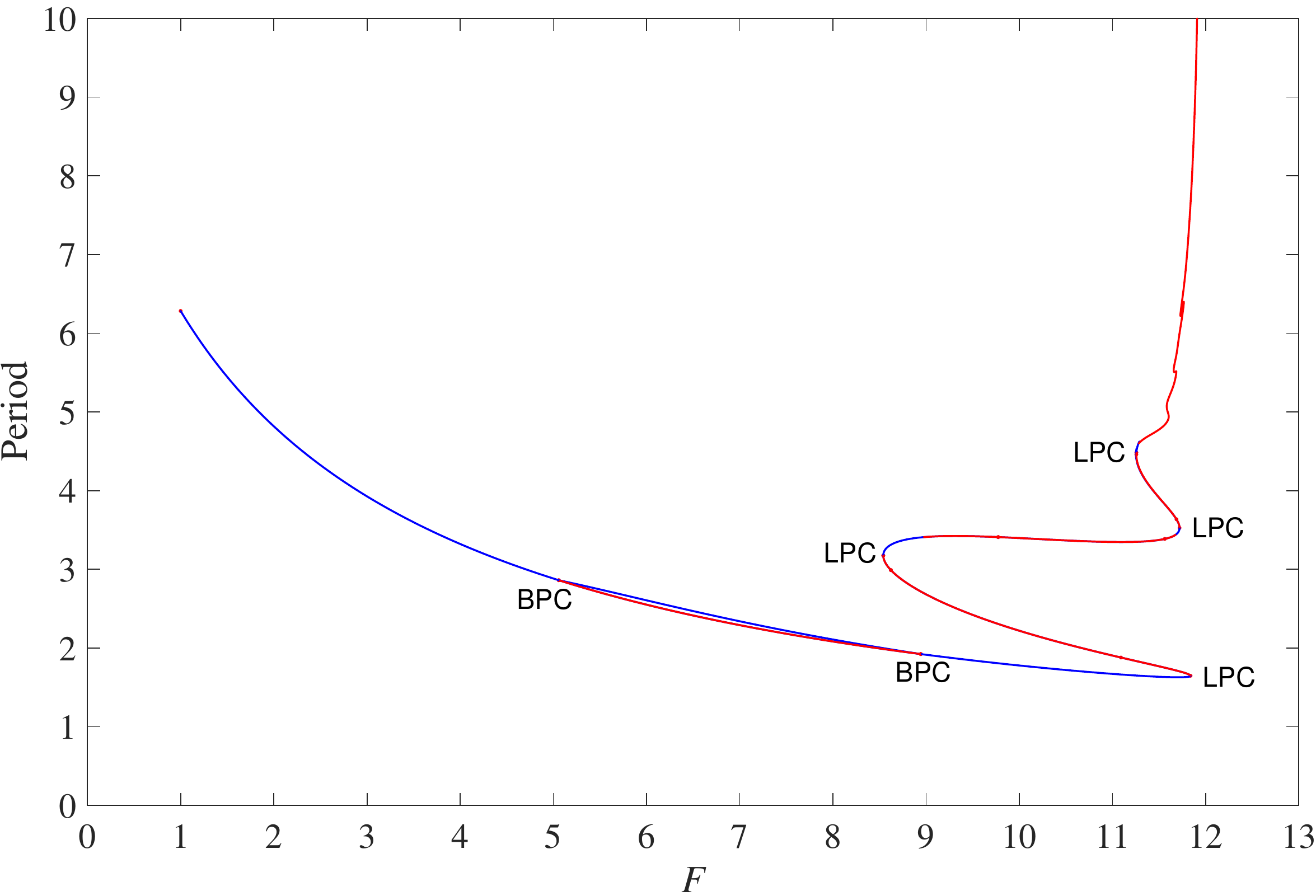}\\
    \caption{Continuation of the periodic orbit originating from the first Hopf bifurcation for $n=4$. For parameter values where the cycle is stable, the curve is coloured blue; where it is unstable, it is coloured red. The periodic attractor remains stable until $F \approx 5.06$ where it exchanges stability with another periodic attractor. However, at $F \approx 8.93$ the original periodic attractor gains stability again. Also, from $F \approx 8.540498$ additional limit cycles are created through fold bifurcations of limit cycles. Finally, at $F_{SN} \approx 11.8382$, it disappears through a saddle-node bifurcation.}\label{fig:Lorenz96n4LCF-Period+Stability}
\end{figure}

\begin{figure}[ht!]
  \centering
    \includegraphics[width=0.49\textwidth]{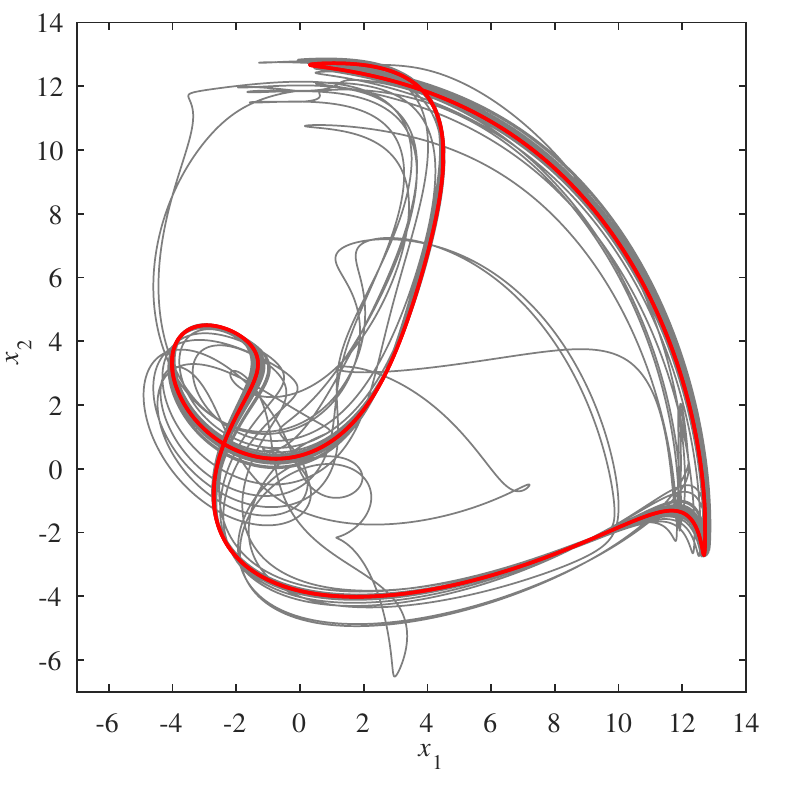}
    \caption{Plot of the attractors for $n=4$ and $F=11.83$ (red) and $F=11.9$ (grey). For the value $F=11.83$ we have a stable periodic orbit, whereas $F=11.9$ gives a chaotic attractor which partly resembles the red, stable periodic orbit. See also Figure~\ref{fig:Lorenz96n4LCFTimes}.}
\label{fig:Lorenz96n4LCF1190+1183}
\end{figure}
\begin{figure}[ht!]
  \centering
  \includegraphics[width=\textwidth]{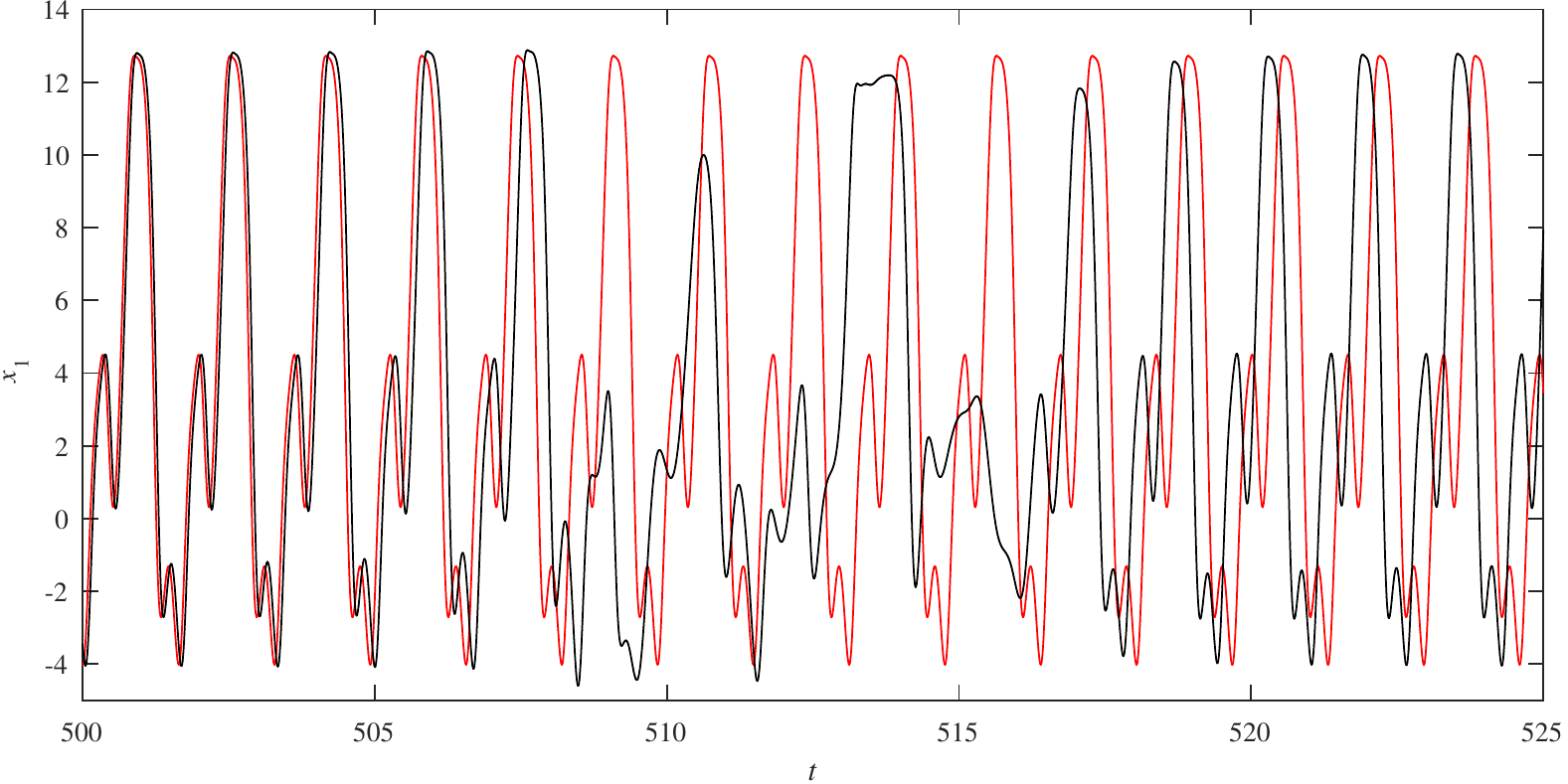}\\
  \caption{Time series of the first coordinate for the attractor with $n=4$ and $F=11.83$ (red, periodic) and $F=11.9$ (black, chaotic). The black curve shows alternating dynamics between periodic and chaotic behaviour which is typical for intermittency.}\label{fig:Lorenz96n4LCFTimes}
\end{figure}
At $F \approx 8.540498$ an additional limit cycle appears through a fold bifurcation of limit cycles, which is stable for only a short interval. This bifurcation is followed by more fold bifurcations, which accumulate at $F \approx 11.77$, as can be seen from Figure~\ref{fig:Lorenz96n4LCF-Period+Stability}.  This phenomenon suggests a homoclinic or heteroclinic structure \cite{Kuznetsov04}. Similar behaviour has been observed in other atmospheric models \cite{Veen03}. Analysis of the system for this parameter value indicates a heteroclinic structure. At $F\approx 8.898979$, namely, two pairs of four equilibria appear through a fold bifurcation. By numerical continuation we found the following coordinates for these equilibria at $F=12.081216$ (the importance of this value will become clear in a moment):
\begin{align}\label{eq:Lz96n4eqfold}
  x^1 &= (-1.182161, -0.233114, 11.543085, 1.126287),\\
  x^2 &= (-2.668230, -1.166341, 6.813306, 1.848383),\nonumber
\end{align}
while the other six equilibria can be obtained by a cyclic shift of the entries. Both types of equilibria are hyperbolic saddles with three, resp.\ two, stable eigenvalues. However, only at $F\approx12.081216$ (which is in the chaotic region) we have numerically detected a heteroclinic cycle between the equilibria $x^1$, using \textsc{MatCont}. A continuation of these connections in the $(F,G)$-plane for the two-parameter system does not yield any other value $F$ for which a heteroclinic cycle exist at $G=0$. The heteroclinic cycle for $(F,G)\approx(12.081216,0)$ is shown in Figure~\ref{fig:Lorenz96n4Heteroclinic}. Notice the similarity between the right panel and the attractor in Figure~\ref{fig:Lorenz96n4LCF1190+1183}.

\begin{figure}[h!]
  \centering
    \includegraphics[width=0.49\textwidth]{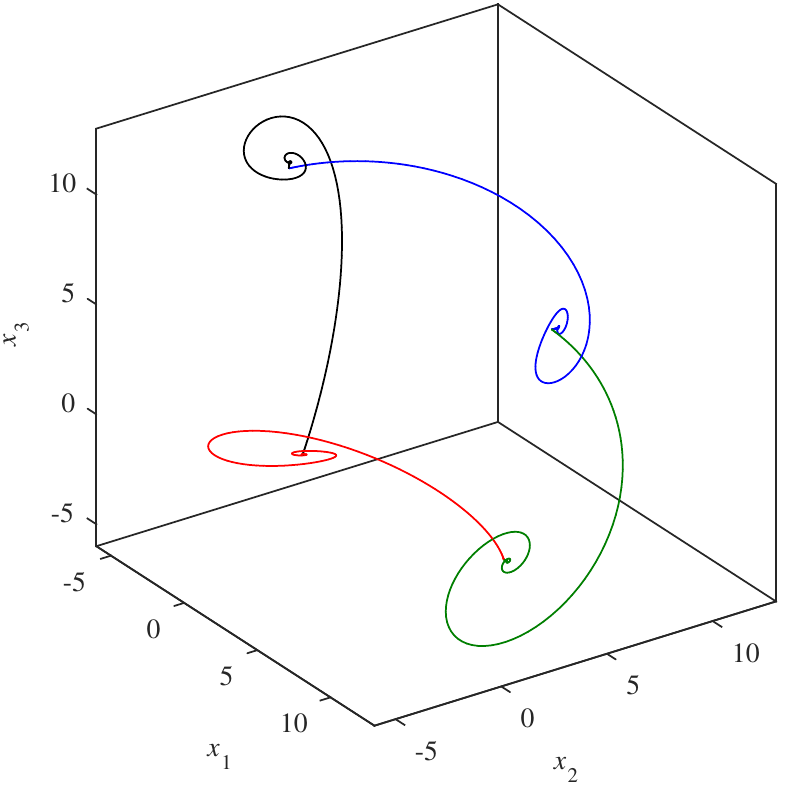}
    \includegraphics[width=0.49\textwidth]{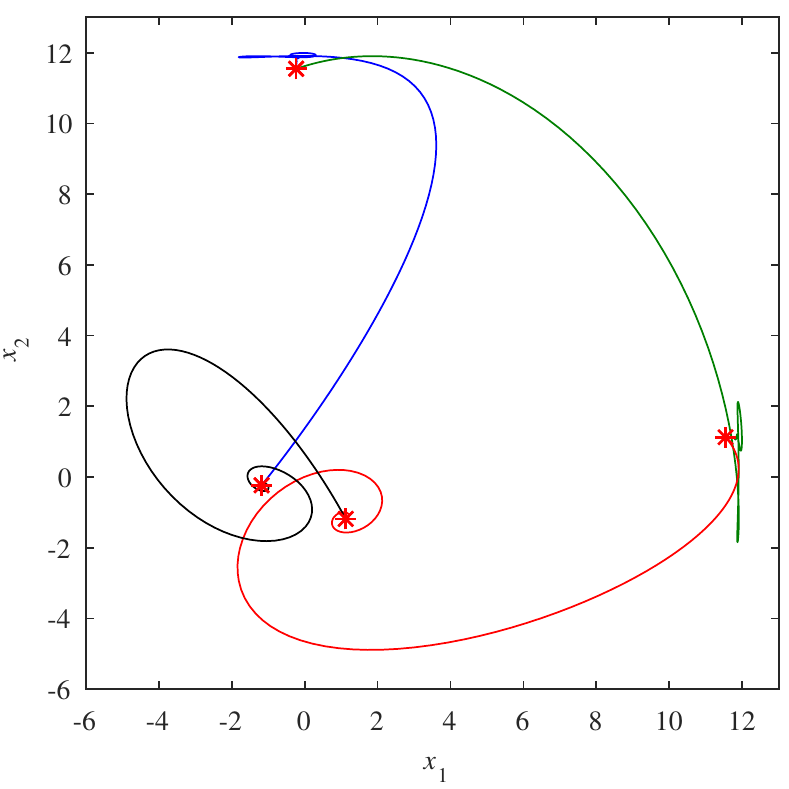}
    \caption{Heteroclinic cycle with four orbits connecting the equilibria~\eqref{eq:Lz96n4eqfold} for $n=4$ and $F=12.081216$ in three coordinates (left panel). The right panel is a projection on the $(x_1,x_2)$-plane and shows also the location of the equilibria. Notice the resemblance to the periodic attractor in Figure~\ref{fig:Lorenz96n4LCFTimes}.}
\label{fig:Lorenz96n4Heteroclinic}
\end{figure}

\subsection{Dimension $n = 5$}\label{sec:n=5}
For $n = 5$, the first bifurcation after the Hopf bifurcation at $F_{\Hf}(1, 5) \approx 0.8944272$ is a period-doubling bifurcation (PD) which occurs at $F_{\PD,1} \approx 3.937853$. The next three PDs occur for the parameter values  $F_{\PD,2}\approx 4.981884$, $F_{\PD,3} \approx 6.371496$, $F_{\PD,4} \approx 6.640968$, respectively.

The bifurcation diagrams in Figure~\ref{fig:dim5-bifos} suggest that a cascade of period-doubling bifurcations takes place. After the cascade, a chaotic attractor is detected at $F=6.72$, see Figure~\ref{fig:dim5-sa}. The Poincar\'e section of this attractor appears to have the structure of a fattened curve. This suggests that the attractor is of H\'enon-like type, which means that it is the closure of an unstable manifold of an unstable periodic point of the Poincar\'e map. We have numerically detected an unstable periodic orbit at $F=6.72$ which corresponds to an unstable period-3 point for the Poincar\'e return map to the section $\Sigma = \{x_1 = 5\}$. The unstable manifold of this period-3 point was computed with standard numerical techniques which are described in \cite{Simo90}. Figure~\ref{fig:dim5-wu} shows a magnification of the unstable manifold along
with the attractor of the Poincar\'e map. The two plots are in very good agreement with each other. Therefore, we conjecture the attractor in Figure~\ref{fig:dim5-sa} to be the closure of the unstable manifold of a saddle periodic orbit.

The PDs persist for each $n = 5m$ with $m=1,\dots,10$. In all these cases, the bifurcation values of the first Hopf bifurcation and the first period-doubling are exactly the same as in the case of $n = 5$ (see Figure~\ref{fig:bifLC5m}). From $n = 55$ on the pattern deviates, because the parameter value of the first Hopf bifurcation changes: now a Neimark-Sacker bifurcation (NS) is the first bifurcation after the Hopf bifurcation, but the torus originating from this bifurcation disappears for slightly larger $F$ and we seem to have again the PD-pattern with a PD at $F_{\PD} = 3.937853$ as before.

We conjecture that this phenomenon can be explained by the wave number of the periodic attractor after the Hopf bifurcation. It turns out that for $n = 5m$ with $m=1,\dots,10$ the wave number of this attractor is exactly equal to $m$. Hence, $n/l\in \Nat$ and therefore the attractor has repeating coordinates with $x_{j+5} = x_j$ for all $1\leq j \leq n$ and indices modulo $n$. This implies that the travelling wave is contained in an invariant subspace of $\Real^n$ defined by
\begin{equation}\label{eq:invmanWl}
    W^5 = \{x\in\Real^n: x_{j+5} = x_j\ \textrm{for all}\ 1\leq j \leq n\},
\end{equation}
where the index $j$ has to be taken modulo $n$. For any $n=5m$, the dynamics restricted to $W^5$ is governed by the Lorenz-96 model for $n=5$. We will further explore this property in forthcoming work \cite{Kekem17b}. However, for $n\geq 55$ this phenomenon breaks down, since the wave number $l$ of the periodic attractor no longer satisfies the relation $n/l \in\Nat$. Nevertheless, it can happen that this periodic attractor becomes unstable and that again the periodic attractor with wave number $m$ takes up stability. However, this is not guaranteed to happen in general, especially for high dimensions, since the quotient $n/l_1(n)$ converges to a non-integer number for $n\rightarrow\infty$ as shown in Proposition~\ref{prop:period-infinity}. Therefore, for increasing $n$ there is an increasing number of periodic attractors whose wave numbers $l$ satisfy $\tfrac{n}{l_1(n)} < \tfrac{n}{l} < 5$ and so it will become more rare to find a stable periodic attractor which inherits its dynamics from the case $n=5$.

\begin{figure}[ht!]
\includegraphics[width=0.49\textwidth]{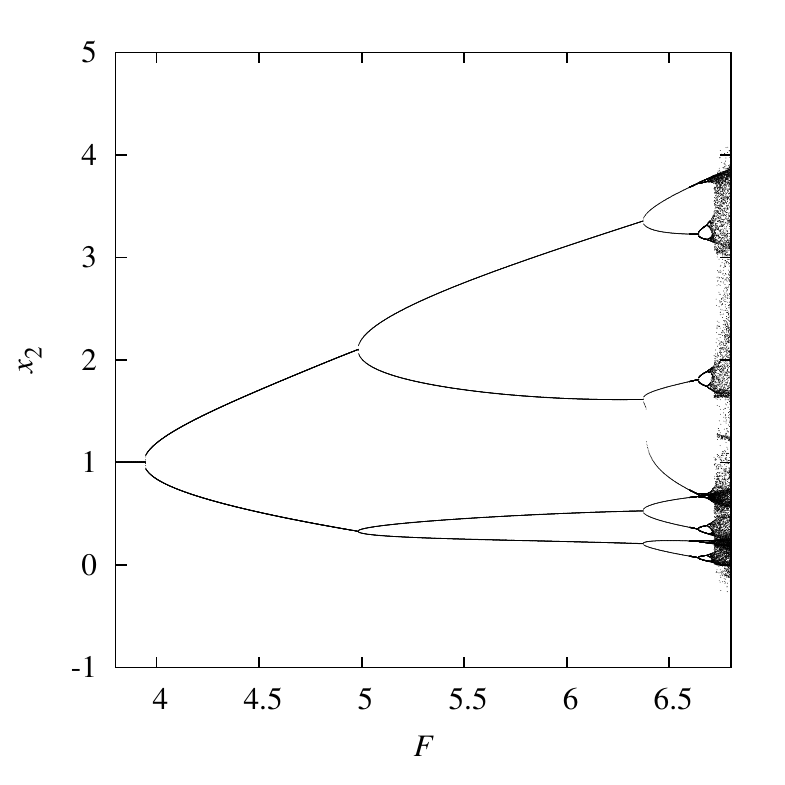}
\includegraphics[width=0.51\textwidth]{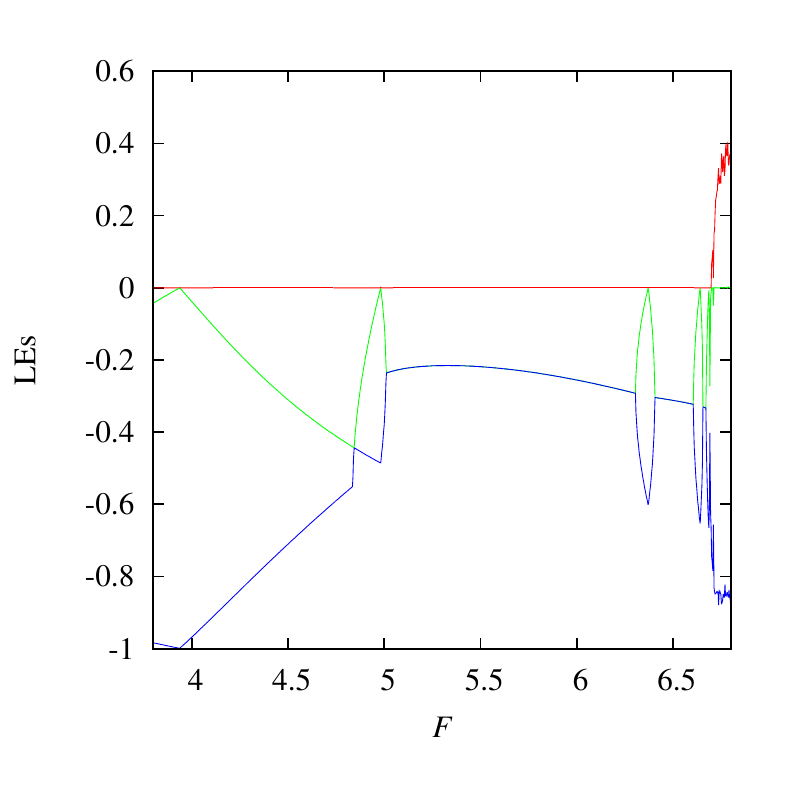}
\caption{Bifurcation diagrams for dimension $n = 5$. The left panel shows the attractors of the Poincar\'e return map defined by the section $\Sigma = \{x_1 = 0\}$; the right panel shows the three largest Lyapunov exponents of the Lorenz-96 model as a function of the parameter $F$.}\label{fig:dim5-bifos}
\end{figure}

\begin{figure}[ht!]
\includegraphics[width=0.49\textwidth]{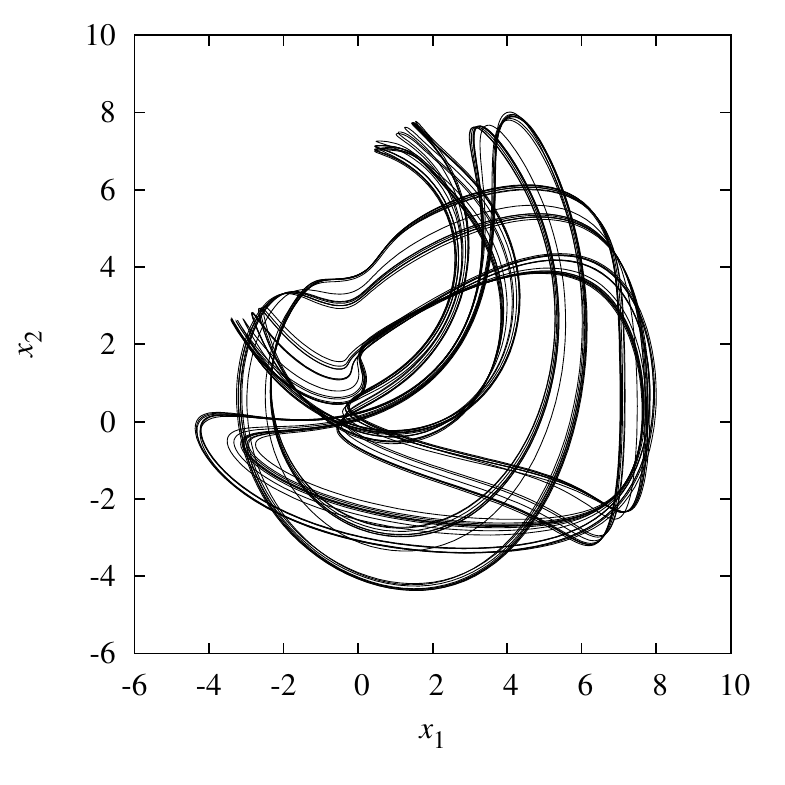}
\includegraphics[width=0.49\textwidth]{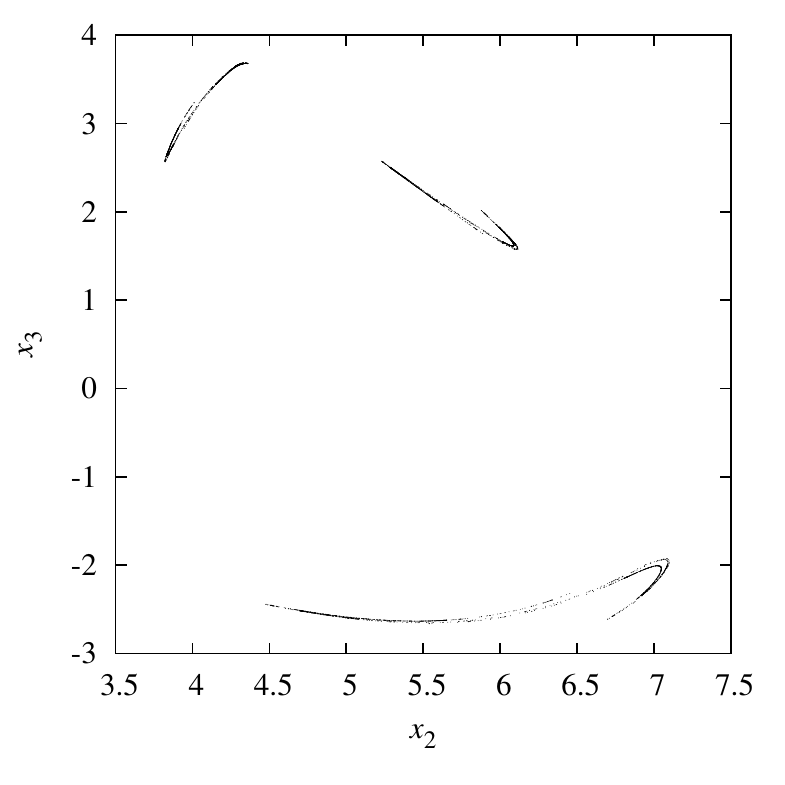}
\caption{A chaotic attractor (left panel) for $(n,F) = (5,6.72)$, which is after the PD-cascade, and the corresponding Poincar\'e section defined by $\Sigma = \{x_1 = 5\}$ (right panel). The latter appears to have the structure of a fattened curve, see also Figure~\ref{fig:dim5-wu}.}\label{fig:dim5-sa}
\end{figure}

\begin{figure}[ht!]
\includegraphics[width=0.49\textwidth]{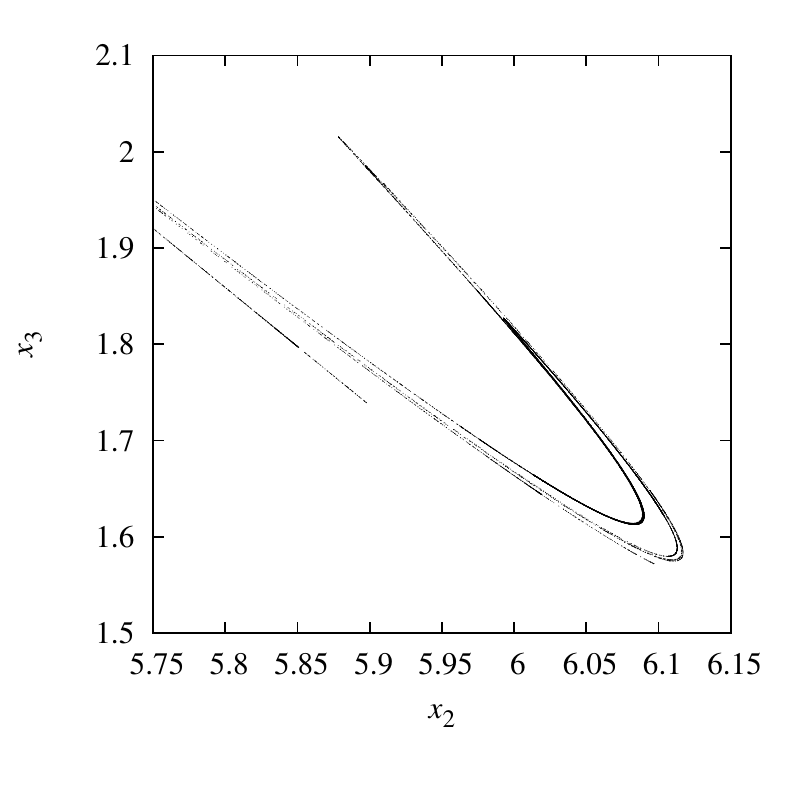}
\includegraphics[width=0.49\textwidth]{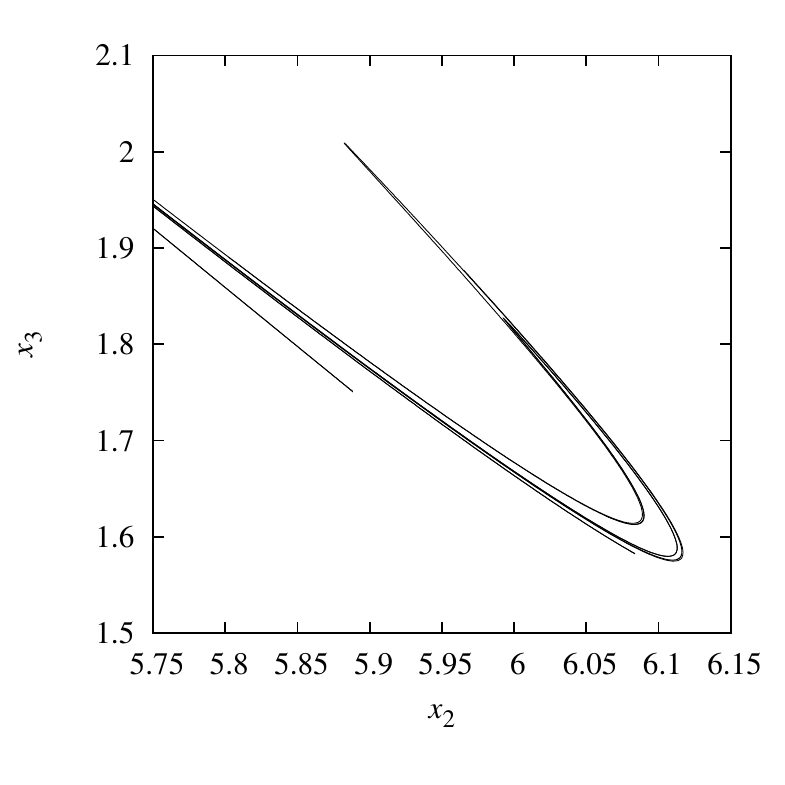}
\caption{Magnification of the Poincar\'e section in the right panel of Figure~\ref{fig:dim5-sa} (left panel) and the unstable manifold of the period-3 point of the Poincar\'e return map at the same parameter values (right panel). The plots agree very well with each other which suggests that the attractor in Figure~\ref{fig:dim5-sa} is the closure of the unstable manifold of the period-3 point.}\label{fig:dim5-wu}
\end{figure}

\begin{figure}[p]
\centering
\includegraphics[width=\textwidth]{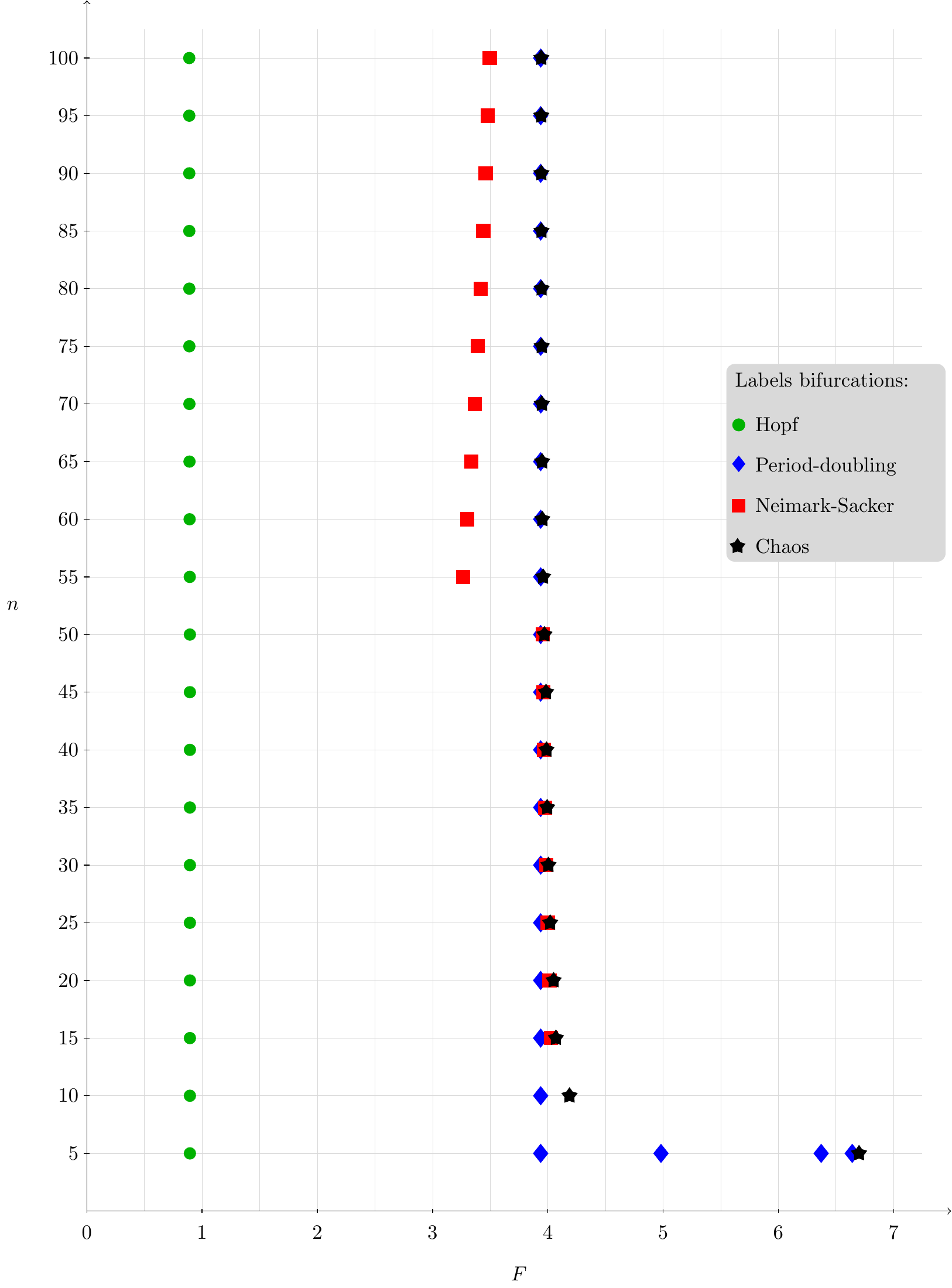}
\caption{Diagram showing the bifurcations of the trivial equilibrium and the stable attractor originating from one or more subsequent bifurcations for $n = 5m$, $m = 1,\ldots,20$. Each shape denotes a bifurcation at the corresponding value of $F$ or represents the point where chaos sets in. The type of bifurcation is shown by the legend at the right. Note that we only show (visible) bifurcations of the stable orbits which lead eventually to chaos. For $n\geq55$, the Hopf bifurcation value changes and a Neimark-Sacker bifurcation appears before the usual period-doubling bifurcation that persists up to at least dimension $n=100$.}\label{fig:bifLC5m}
\end{figure}

\subsection{Dimension $n = 6$}\label{sec:n=6}
For $n=6$ the first bifurcation after the Hopf bifurcation at $F_{\Hf}(1, 6) = 1$ is an NS, which occurs at $F_{\NS} \approx 5.456661$. At this bifurcation the periodic attractor loses stability and gives birth to a quasi-periodic attractor in the form of a 2-dimensional torus, see Figure~\ref{fig:dim6-2torus}. The Lyapunov diagram in Figure~\ref{fig:dim6-bifos} clearly shows alternating intervals of periodic behaviour and quasi-periodic behaviour. This phenomenon can be clarified by the two-parameter system. In the $(F,G)$-plane this alternation organizes itself in the form of the well-known Arnol'd resonance tongues, which emanate from the NS-curve \cite{Kuznetsov04}. For a better visualization of these tongues the affine transformation $(F,G) = (U+6V+1, 0.35\,V-0.25)$ has been used to obtain the right panel of Figure~\ref{fig:dim6-bifos}.

\begin{figure}[ht!]
\includegraphics[width=0.49\textwidth]{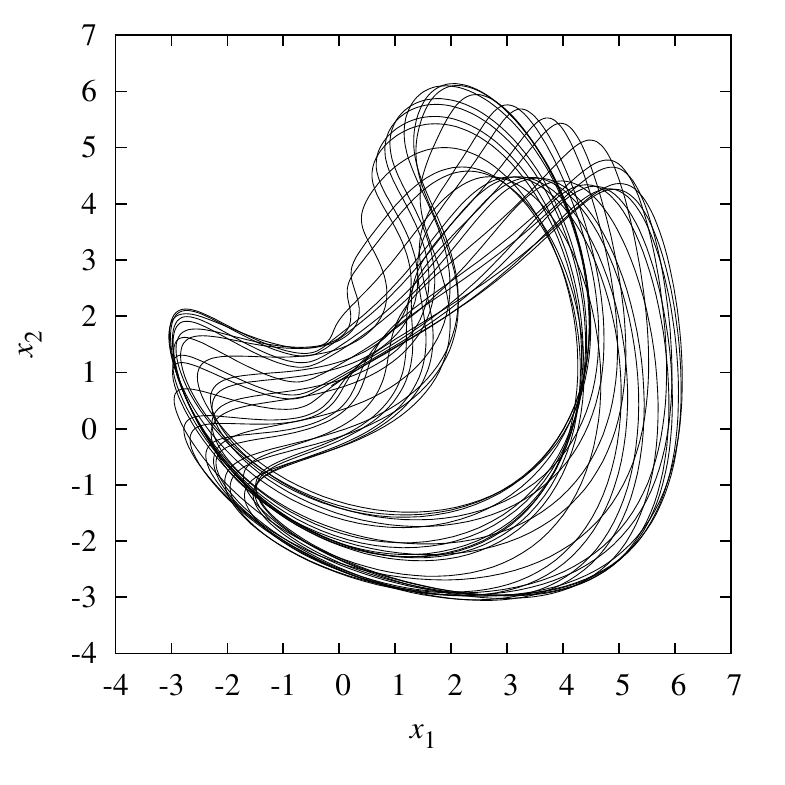}
\includegraphics[width=0.49\textwidth]{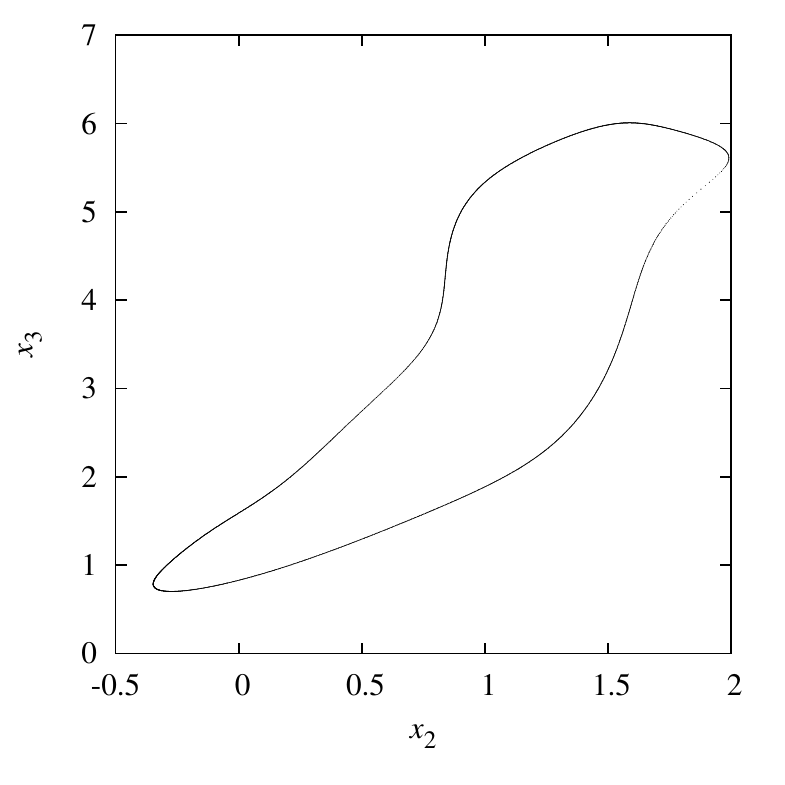}
\caption{A 2-torus attractor (left panel) for $(n,F,G) = (6, 5.6, 0)$ after the NS-bifurcation and the corresponding invariant circle of the Poincar\'e return map defined by the section $\Sigma=\{x_1 = 0\}$ (right panel).}\label{fig:dim6-2torus}
\end{figure}

\begin{figure}[ht!]
\includegraphics[width=0.49\textwidth]{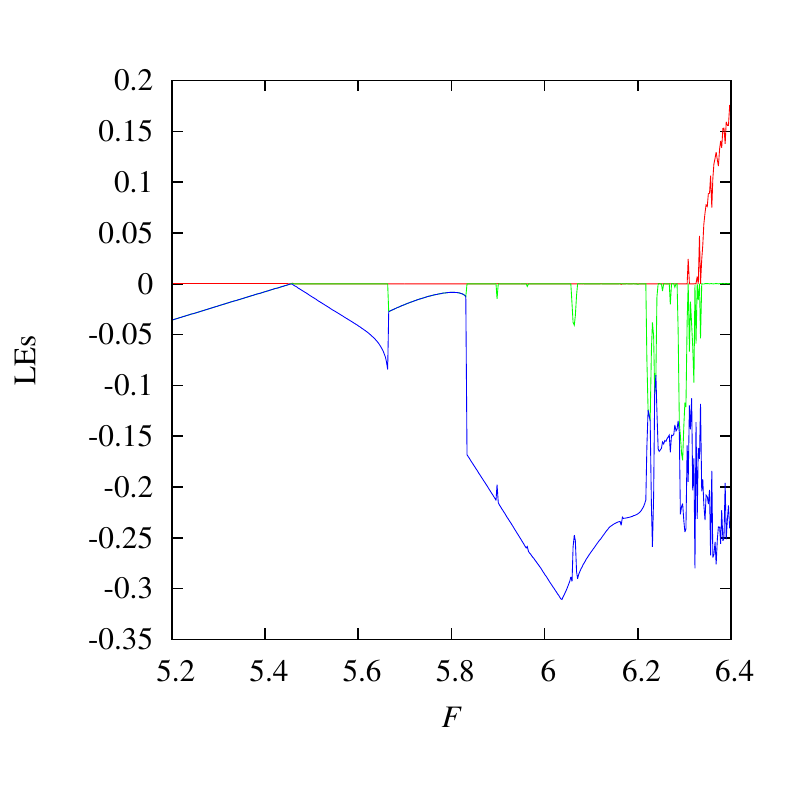}
\includegraphics[width=0.49\textwidth]{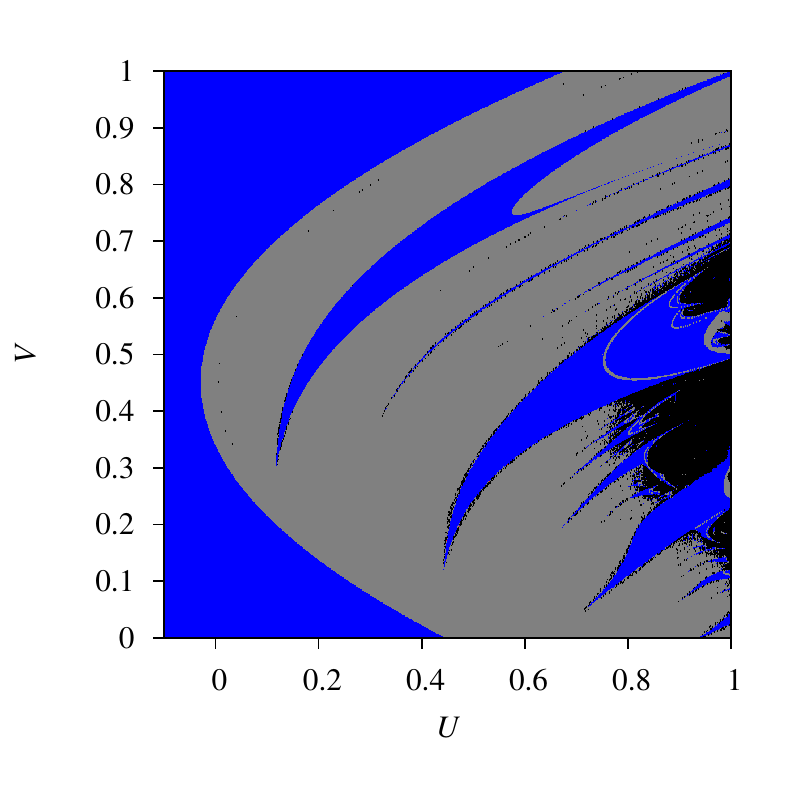}
\caption{The three largest Lyapunov exponents of the Lorenz-96 model as a function of the parameter $F$ for $n=6$ and $G=0$ (left panel) and a Lyapunov diagram in the parameters $(U,V)$ defined by the affine transformation $(F,G) = (U+6V+1, 0.35\,V-0.25)$ (right panel). The colour coding for the right panel is almost the same as in Table~\ref{tab:LegendLyapdiag}, except that blue indicates a periodic attractor for wave number $l=1$. The Arnol'd tongues emanating from the NS-curve are clearly visible.}\label{fig:dim6-bifos}
\end{figure}

\subsection{Dimension $n = 12$}\label{sec:n=12}
Part of the dynamics for this dimension is already explained in Section~\ref{sec:Unfoldingn12}. Here we present the results of our numerical exploration which support the analytical results very well.

Recall that the first bifurcation of the trivial equilibrium for $G=0$ is a Hopf-Hopf bifurcation, which is rather exceptional for the Lorenz-96 model. This codimension two point acts as an organising centre, as explained in Section~\ref{sec:unfolding}. Two codimension one NS-curves originate from this bifurcation point, each corresponding to one of the wave numbers $l=2$ or $3$. The local bifurcation diagram obtained by  \textsc{MatCont} is presented in Figure~\ref{fig:Lorenz96n12ExtBifDiag1C} and should be compared with the analytically computed bifurcation diagram in Figure~\ref{fig:Lorenz96n12ExtLocalBifDiagram}.
\begin{figure}[ht!]
\centering
\includegraphics[width=0.5\textwidth]{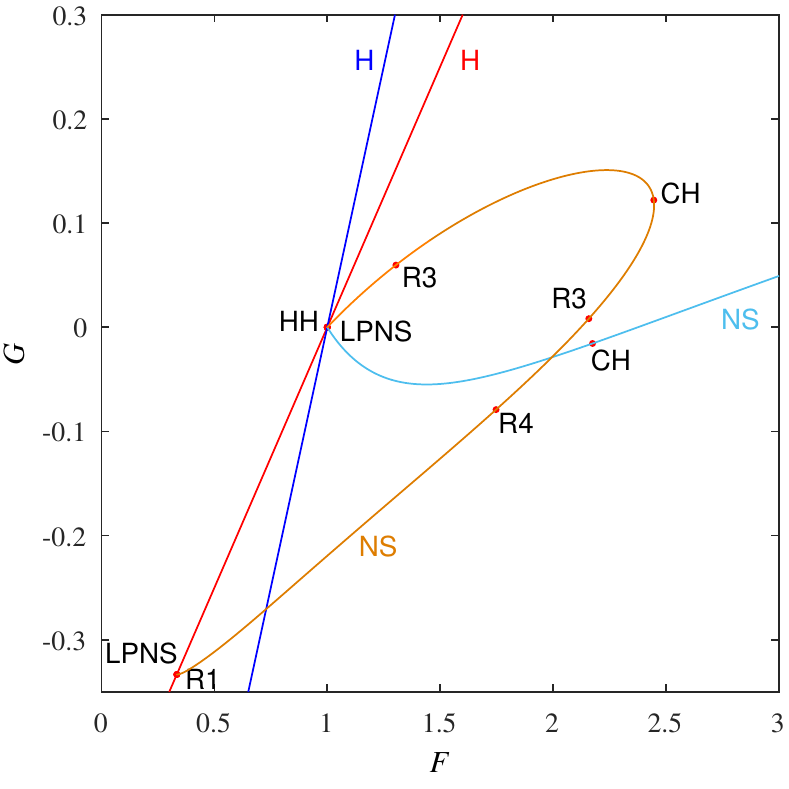}\\
\caption{Local bifurcation diagram for $n=12$ near the Hopf-Hopf bifurcation point obtained by numerical continuation. The blue and red lines are the Hopf bifurcation curves for $l=2$ and $l=3$, respectively. The light-blue and orange curves are NS-curves for the periodic orbit originating from the Hopf bifurcation with $l=2$ and $l=3$, respectively. The NS-curve for $l=3$ ends at the corresponding Hopf line. Also compare with Figure~\ref{fig:Lorenz96n12ExtLocalBifDiagram} and~\ref{fig:Lorenz96n12Lyapdiagram} both indicating the (global) dynamics in each region.}\label{fig:Lorenz96n12ExtBifDiag1C}
\end{figure}

In the region enclosed by both NS-curves multistability occurs, due to the coexistence of the periodic attractors for both $l=2$ and $l=3$. Both attractors are plotted for the same parameter values $(F,G) = (1.5,0)$ in Figure~\ref{fig:dim12-wave}. Together with their Hovm\"oller diagrams in Figure~\ref{fig:dim12-hov}, this shows that both waves are of a different nature. Multistability is also reflected by the Lyapunov diagrams in Figure~\ref{fig:Lorenz96n12Lyapdiagram}. The left (resp.\ right) panel is obtained by fixing the value of the parameter $F$ and increasing (resp.\ decreasing) the parameter $G$. Along each vertical line in the parameter plane we have used the last point on the attractor detected in the previous step as an initial condition for the next one. In both diagrams we have used a grid of size 1000 by 1000. See Table~\ref{tab:LegendLyapdiag} for an explanation of the colouring for each region. Figure~\ref{fig:Lorenz96n12Lyapdiagram} clearly shows that there is a region in the parameter plane where two different periodic attractors coexist. Also note that the bifurcation curves of Figure~\ref{fig:Lorenz96n12ExtBifDiag1C} are clearly visible in these diagrams.  Lastly, Figure~\ref{fig:Lorenz96n12Lyapdiagram} shows that the Hopf-Hopf bifurcation influences a large portion of the parameter space as well as the phase space.

\begin{figure}[ht!]
  \centering
    \includegraphics[width=0.49\textwidth]{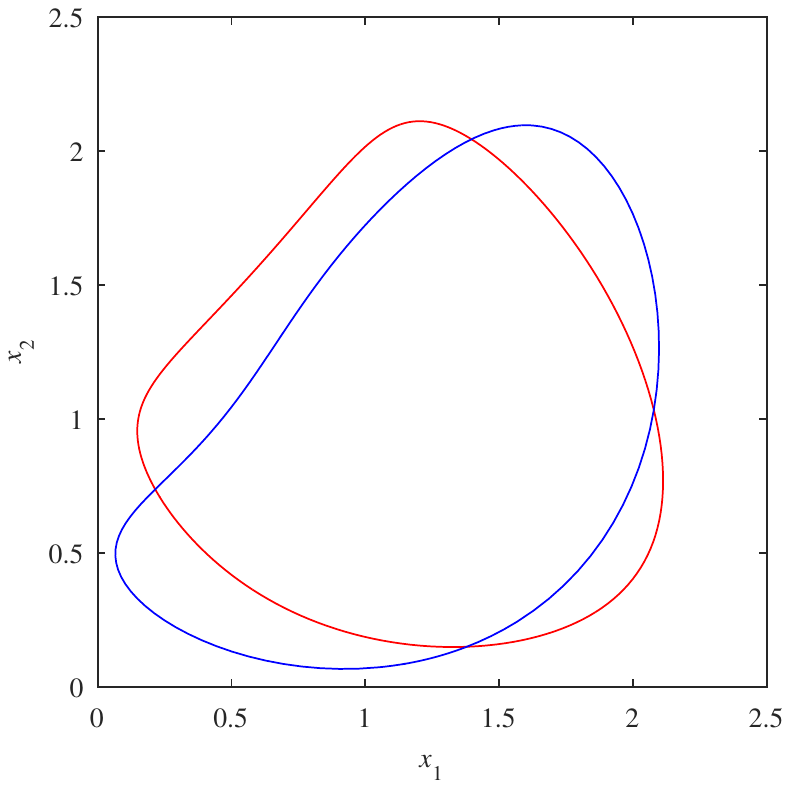}
    \caption{Periodic attractors with wavenumbers $l=2$ (blue) and $l=3$ (red) for $(n,F,G)=(12,1.5,0)$, which is in the region enclosed by the two NS-curves where multistability occurs.}\label{fig:dim12-wave}
\end{figure}
\begin{figure}[ht!]
  \centering
    \includegraphics[width=0.49\textwidth]{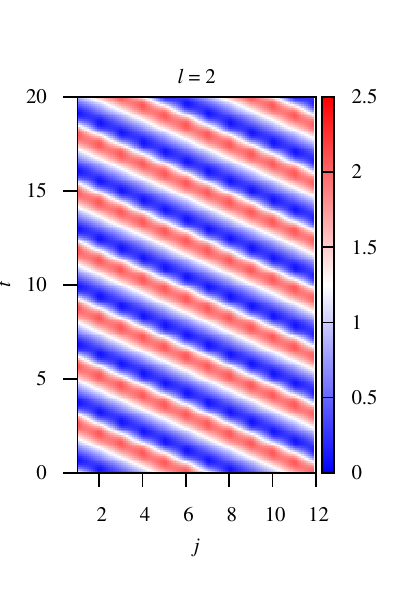}
    \includegraphics[width=0.49\textwidth]{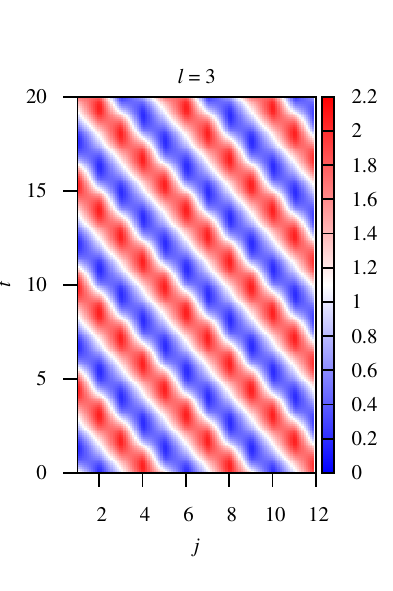}\\
    \caption{Hovm\"oller diagrams of the periodic attractors from Figure~\ref{fig:dim12-wave} with $l=2$ (left panel) and $l=3$ (right panel) for $(n,F,G)=(12,1.5,0)$. The value of $x_j(t)$ is plotted as a function of $t$ and $j$. For visualization purposes linear interpolation between $x_j$ and $x_{j+1}$ has been applied in order to make the diagram continuous in the variable $j$. Note that the difference in both the period and the wave number is clearly visible.}\label{fig:dim12-hov}
\end{figure}

\begin{figure}[p]
  \centering
    \includegraphics[width=0.49\textwidth]{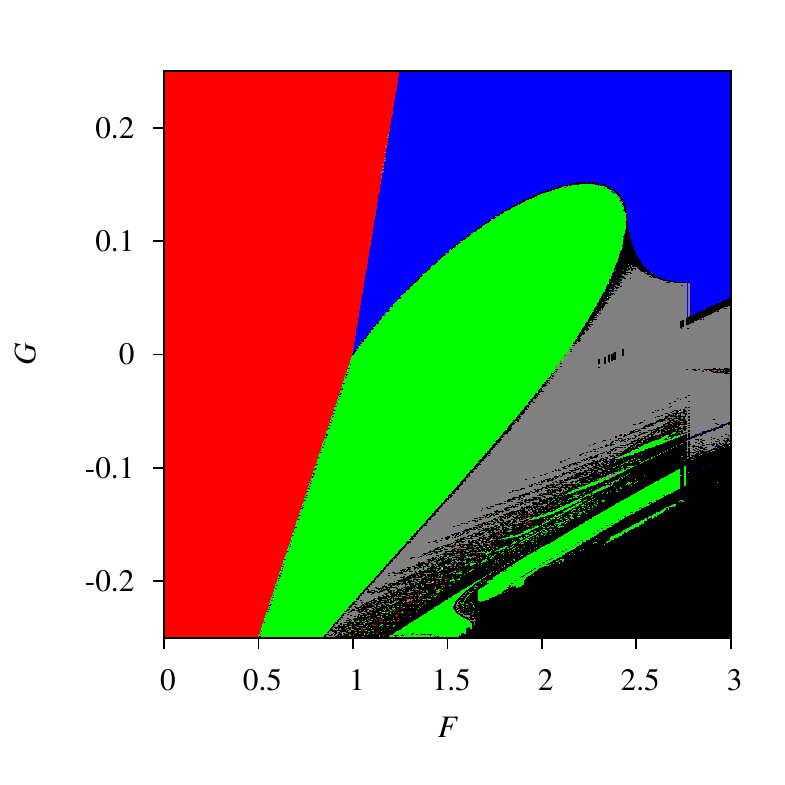}
    \includegraphics[width=0.49\textwidth]{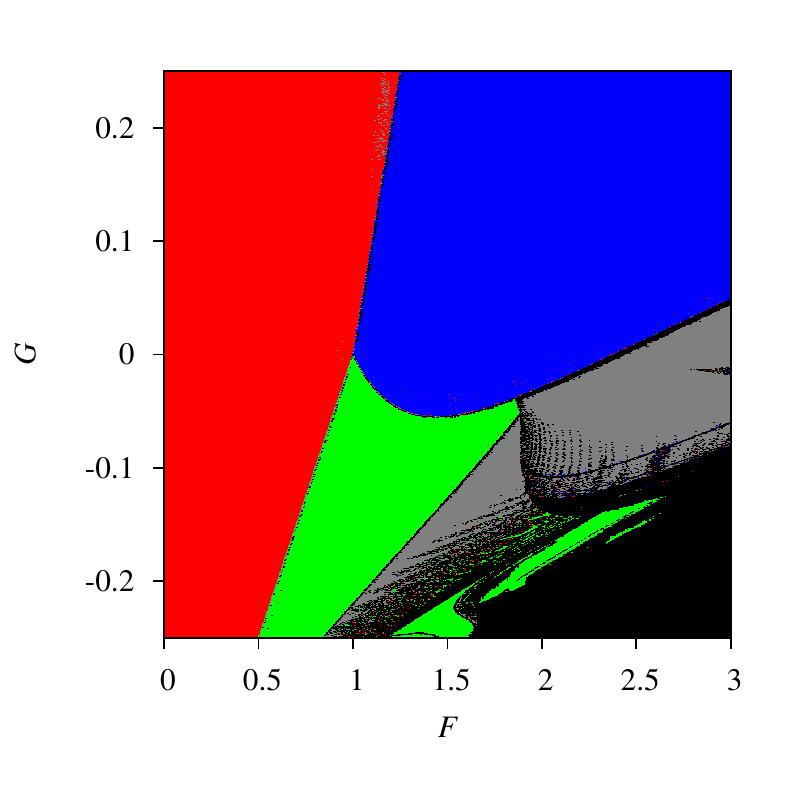}\\
    \caption{Lyapunov diagrams for $n=12$ and domain $(F,G) \in [0,3]\times[-0.25,0.25]$, computed from bottom to top (left panel) and from top to bottom (right panel). See Table~\ref{tab:LegendLyapdiag} for the colour coding. Note that the bifurcation curves shown in Figure~\ref{fig:Lorenz96n12ExtLocalBifDiagram} are clearly visible.}\label{fig:Lorenz96n12Lyapdiagram}
\end{figure}

\begin{table}[p]\centering
\caption{Colour coding for the Lyapunov diagram in Figure
\ref{fig:Lorenz96n12Lyapdiagram}.}
\label{tab:LegendLyapdiag}
\begin{tabular}{ll}
\hline\noalign{\smallskip}
Colour & Type of attractor\\
\noalign{\smallskip}\hline\noalign{\smallskip}
Red   & Stable equilibrium \\
Blue  & Periodic attractor for $l=2$ \\
Green & Periodic attractor for $l=3$ \\
Grey  & Quasi-periodic attractor \\
Black & Chaotic attractor \\
\noalign{\smallskip}\hline
\end{tabular}
\end{table}

\subsection{Dimension $n = 36$}
For $n=36$, we observe again coexistence of attractors, like in the case $n=12$. The Hopf-Hopf bifurcation that induces this phenomenon occurs at the intersection of the Hopf-lines for wave numbers $l=7$ and $l=8$ where $(F,G) = (0.919586, 0.0144084)$, i.e.~close to the $F$-axis. Note that these wave numbers correspond to the first two Hopf bifurcations of the trivial equilibrium for $F>0$ and $G=0$. The Hopf-Hopf bifurcation is of the same type as for $n=12$ (see Section~\ref{sec:Unfoldingn12}), meaning that only two NS-curves arise from the codimension two point. The local bifurcation diagram in Figure~\ref{fig:Lorenz96n36ExtBifDiag} shows these two curves together with their corresponding Hopf-lines. The blue NS-curve (corresponding to $l=7$) intersects the line $G=0$ at $F \approx 0.9092541$, so we can observe multistability in the one-parameter model~\eqref{eq:Lorenz96} for $F$ larger than this value. Again, the Hopf-Hopf bifurcation point acts as an organising centre.

In Figures~\ref{fig:dim36-wave7} and~\ref{fig:dim36-wave8} the Lyapunov diagrams are shown for $l=7$ and $l=8$, respectively, with $G=0$ fixed. For wave number $l=7$, the first bifurcation after the Hopf bifurcation at $F_{\Hf}(7,36)\approx0.902474$ is the mentioned NS at $F \approx 0.9092541$, which is followed by another NS at $F\approx4.389139$. The resulting quasi-periodic attractor then bifurcates to a 3-torus (see below). For $l=8$, a stable periodic attractor originates from a Hopf bifurcation at $F_{\Hf}(8,36) \approx0.898198$. This attractor exhibits a PD at $F\approx3.155456$ and becomes unstable via a subcritical NS at $F\approx3.162597$, which can be seen from the right panel of Figure~\ref{fig:dim36-wave8}. The only stable attractor after $F\approx3.162597$ is the one with wave number $l=7$. This is reflected in the Lyapunov diagrams of Figure~\ref{fig:dim36-wave8}, where the Lyapunov exponents take up the values for $l=7$ right after the subcritical NS at $F\approx3.162597$ (compare with Figure~\ref{fig:dim36-wave7}). These observations show that the region of multistability is bounded for $G=0$.

The Lyapunov diagram in the right panel of Figure~\ref{fig:dim36-wave7} suggests that for $G=0$ a 3-torus exists in a small interval of $F$-values before chaotic attractors are observed. Figure~\ref{fig:dim36-3torus} shows a 3-torus attractor for $(n,F,G)=(36,4.45,0)$ together with a Poincar\'e section defined by $\Sigma =\{x_1=2\}$. This type of behaviour has also been observed for $n=24$ (not shown). Newhouse, Ruelle and Takens \cite{Newhouse78} proved that small perturbations of a quasi-periodic flow on the 3-torus can lead to strange Axiom A attractors. Concrete routes of the NRT-scenario were reported in \cite{Broer08,Broer08A} in the setting of a model map for the Hopf-saddle-node bifurcation in diffeomorphisms. Some techniques to study bifurcations of 3-tori in continuous-time dynamical systems are described in \cite{Kamiyama15}. Unravelling the bifurcations of 3-tori and the associated routes to chaos in the Lorenz-96 model is left for future research.

\begin{figure}[ht!]
\centering
\includegraphics[width=\textwidth]{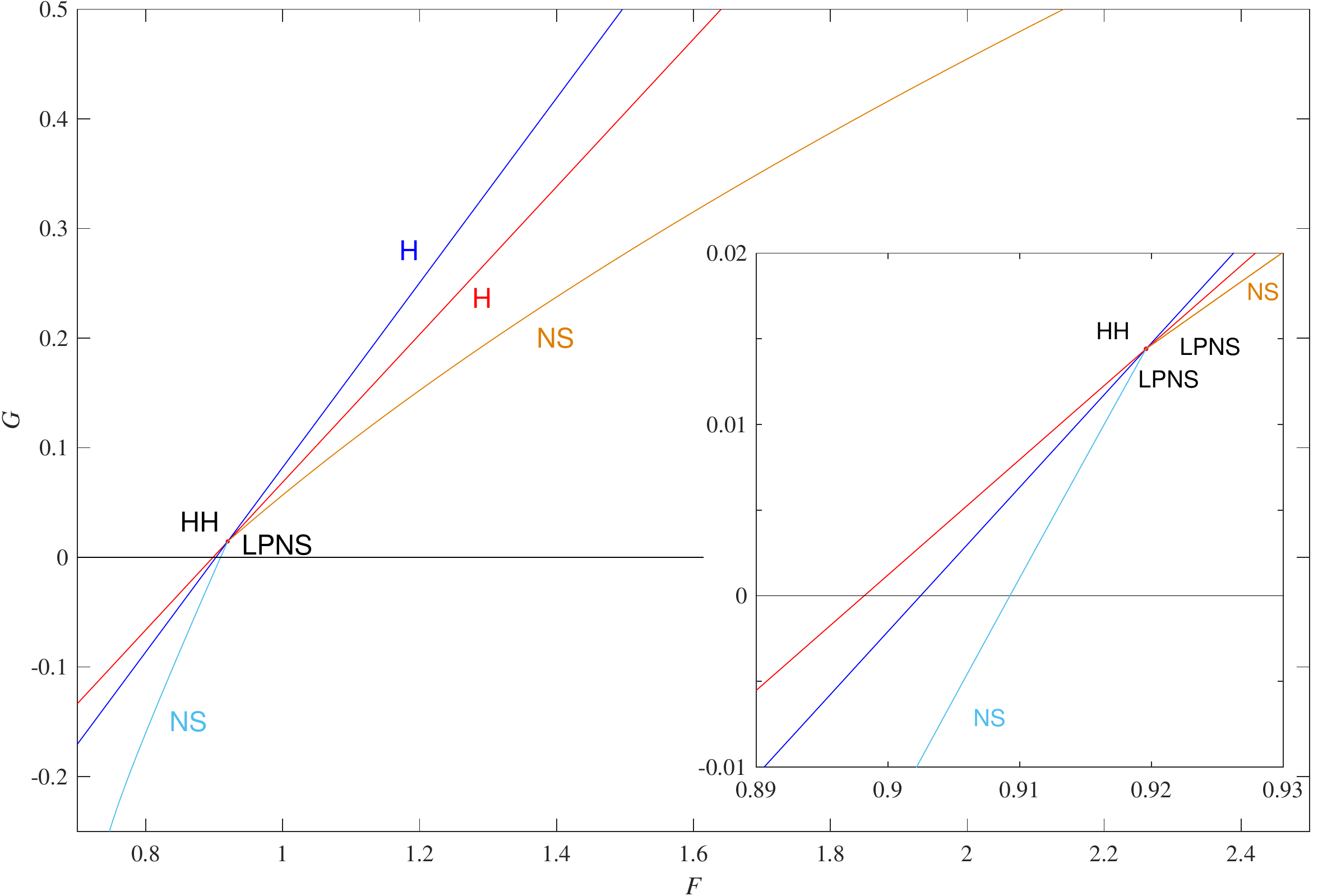}
\caption{Local bifurcation diagram for $n=36$ near the Hopf-Hopf bifurcation point at $(F,G) = (0.919586, 0.0144084)$ obtained by numerical continuation. The blue and red lines are the Hopf bifurcation curves for $l=7$ and $l=8$, respectively. The light-blue and orange curves are NS-curves for the periodic orbit originating from the Hopf bifurcation with $l=7$ and $l=8$, respectively. The box magnifies the region around the Hopf-Hopf point and the line $G=0$.}
\label{fig:Lorenz96n36ExtBifDiag}
\end{figure}

\begin{figure}[p]
\includegraphics[width=0.49\textwidth]{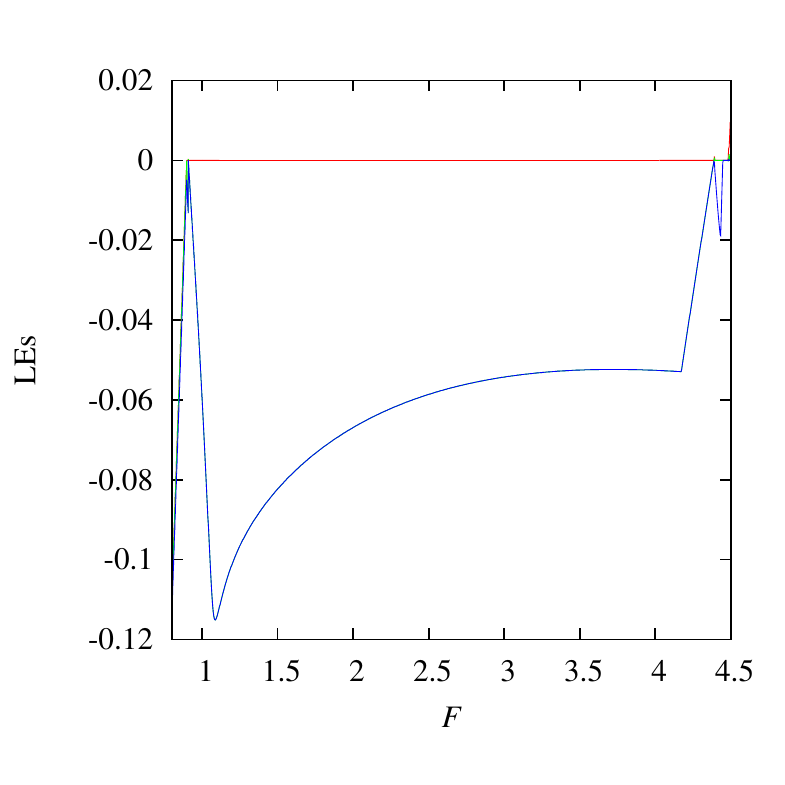}
\includegraphics[width=0.49\textwidth]{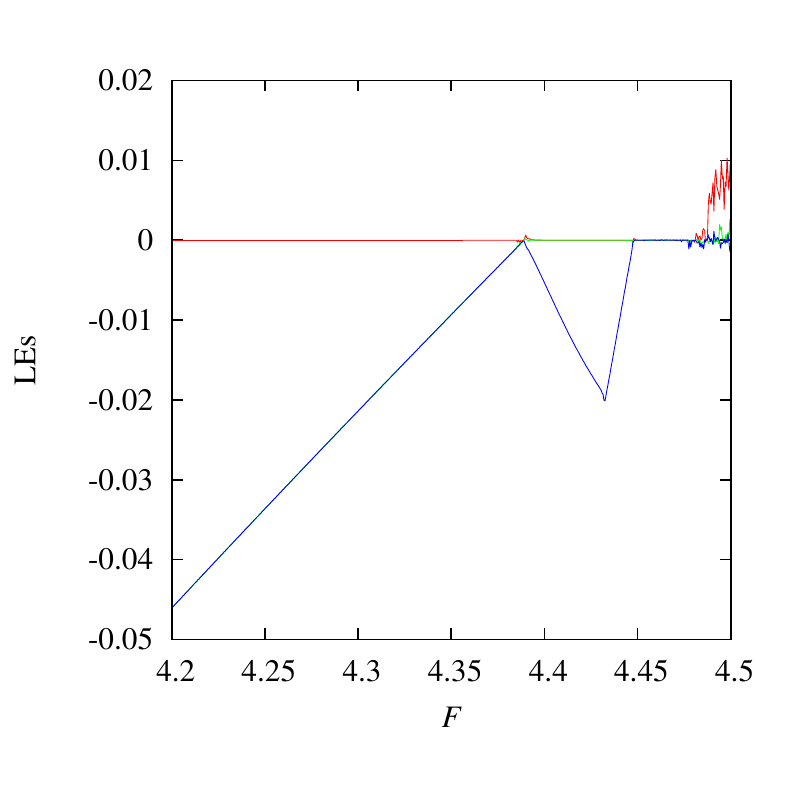}
\caption{The three largest Lyapunov exponents of the Lorenz-96 model as a function of the parameter $F$ for $n=36$ and wave number $l=7$ (left panel).  The right panel is a magnification of the right part of the left panel, which displays the appearance of a 3-torus for $F\in[4.45, 4.48]$. In both panels $G=0$.}\label{fig:dim36-wave7}
\end{figure}

\begin{figure}[p]
\includegraphics[width=0.49\textwidth]{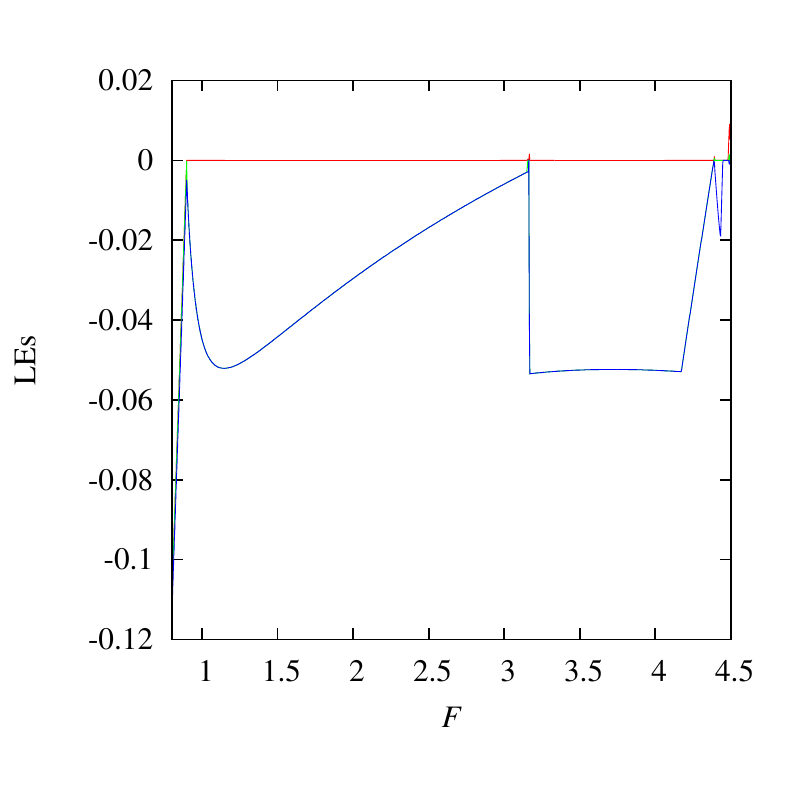}
\includegraphics[width=0.49\textwidth]{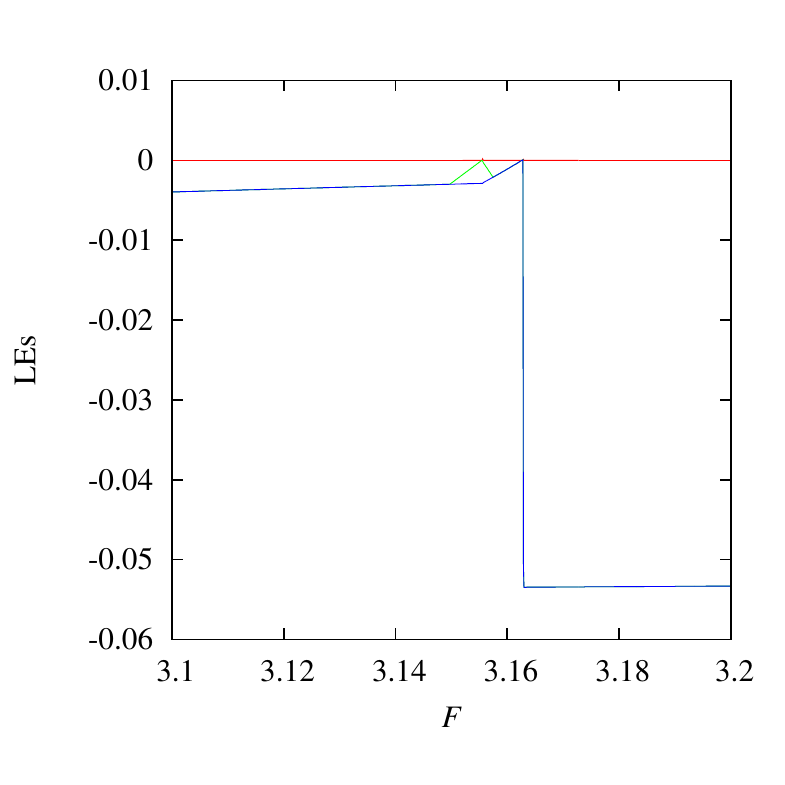}
\caption{The three largest Lyapunov exponents of the Lorenz-96 model as a function of the parameter $F$ for $n=36$ and wave number $l=8$ (left panel). The right panel shows a magnification of the left panel around $F=3.15$, showing the disappearance of the stable attractor for $l=8$ at $F> 3.163$. For larger $F$ the Lyapunov exponents take up the values of the stable attractor with wavenumber $l=7$. In both panels $G=0$.}\label{fig:dim36-wave8}
\end{figure}

\begin{figure}[ht!]
\includegraphics[width=0.49\textwidth]{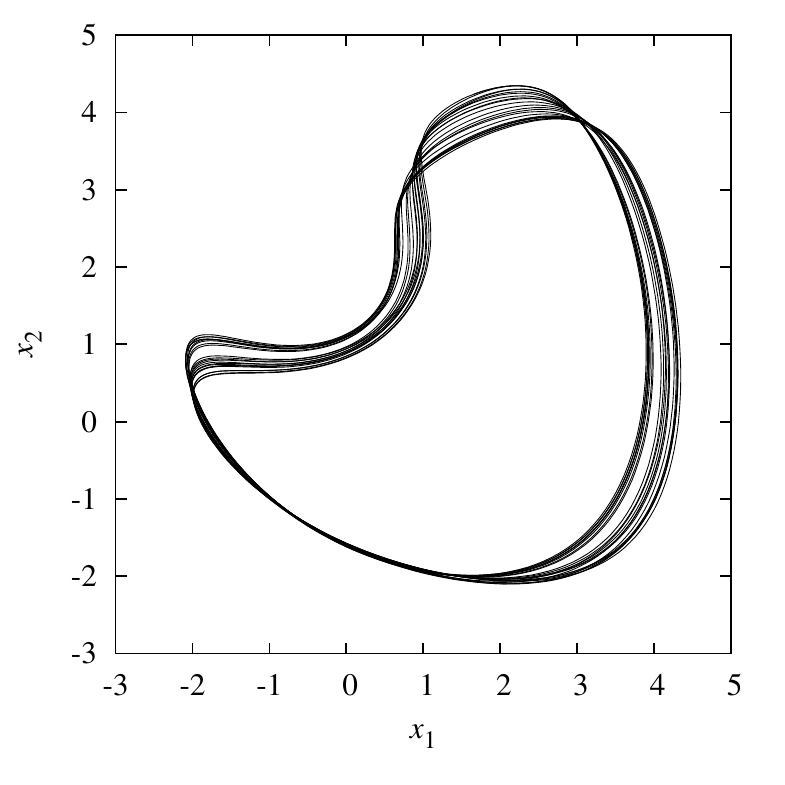}
\includegraphics[width=0.49\textwidth]{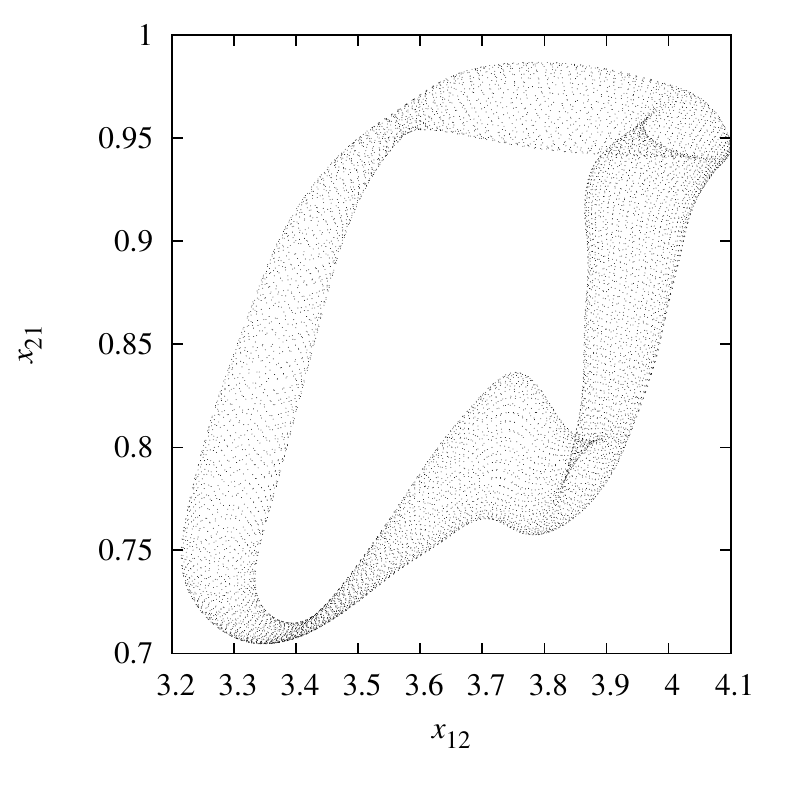}
\caption{A 3-torus attractor (left panel) for $(n,F,G) = (36, 4.45, 0)$ and the corresponding 2-torus attractor of the Poincar\'e return map defined by the section $\Sigma=\{x_1 = 2\}$ (right panel).}
\label{fig:dim36-3torus}
\end{figure}

\section{Proofs of the analytical results}\label{sec:proofs}
In this section we prove the statements in Section~\ref{sec:AnalyticalResults}, in the same order.

\subsection{Proof of Lemma~\ref{lem:Bifevcrossing}}\label{sec:evcrossing}
\begin{proof}[Proof of statement 1]
    Let $n\geq4$ and $l\in \Nat$. To investigate the $l$-th eigenvalue pair of $x_F$ we need to consider $0<l< \tfrac{n}{2}$ only, by equation~\eqref{eq:Lzevconj}. Let the $l$-th eigenvalue be written as $\lambda_l(F,n) := \mu(F) + \omega(F)i$, with real and imaginary parts as in~\eqref{eq:Lzsimpleevn}. Then $\lambda_l (F,n)$ has zero real part if and only if
    \begin{equation*}
      \mu(F) := -1 + F f(l,n) = 0.
    \end{equation*}
    Since the value of $f$ is already fixed by choosing $l$ and $n$, this is achieved only if $F$ equals
    \begin{equation}\label{eq:Hopfbifurcationvalue}
        F_{\Hf}(l,n) = \frac{1}{f(l,n)},
    \end{equation}
    where we need the additional condition that $l\neq\tfrac{n}{3}$, since $f(n/3,n) = 0$ (see Figure~\ref{fig:Lzevsplitting}).

    The eigenvalues cross the imaginary axis with nonzero speed by the fact that
    \begin{equation*}
        \mu'(F_{\Hf}) = f(l,n) \neq 0,
    \end{equation*}
    due to the constraints on $l$. Moreover, let $\omega_0 = |\omega(F_{\Hf})| := -F_{\Hf} g(l,n)$ denote the absolute value of the imaginary part at the Hopf bifurcation. Then, by the restrictions on $l$, $\omega_0$ is nonzero as well.
\end{proof}
\begin{figure}
  \centering
  \includegraphics[width=\textwidth]{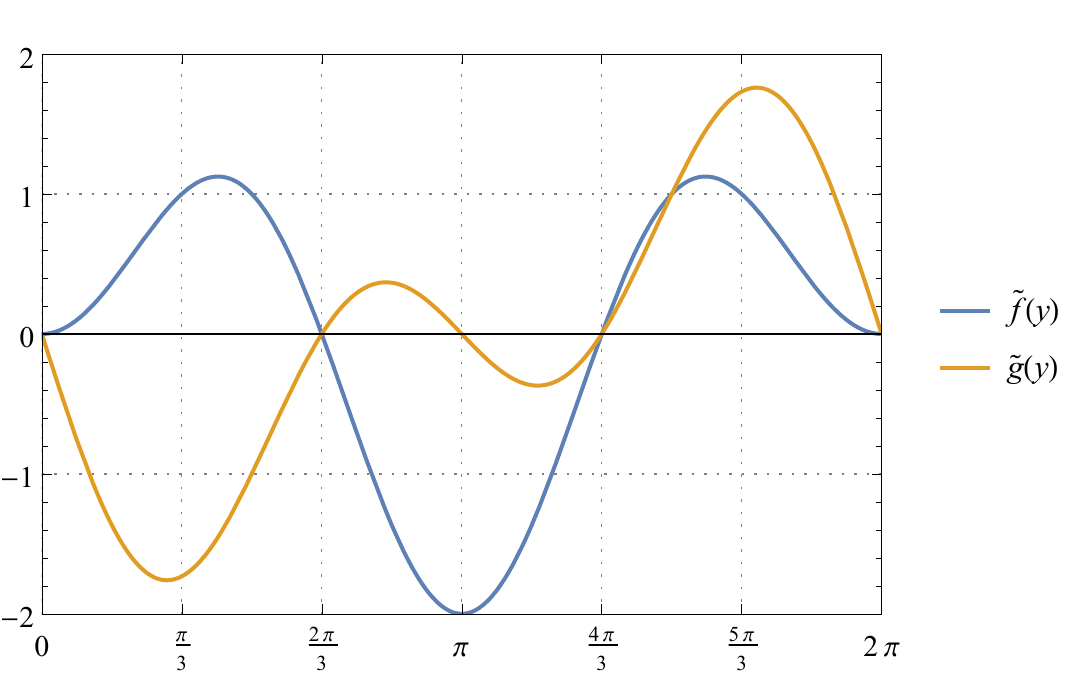}\\
  \caption{Graph of the functions $f$ and $g$ defined by equation~\eqref{eq:Lzevfg}, with the discrete points~$\tfrac{2\pi j}{n}$ replaced by the continuous variable $y \in [0,2\pi]$.}\label{fig:Lzevsplitting}
\end{figure}

For later purposes, we note that at the eigenvalue crossing the dependence of the eigenvalues on $F$ can be replaced by the dependence on $l$, by substituting $F = F_{\Hf}(l,n)$. This gives
\begin{equation}\label{eq:eigenvaluejln}
  \lambda_j(l,n) = -1 + \frac{f(j,n)}{f(l,n)} + i\frac{g(j,n)}{f(l,n)},
\end{equation}
for all $j = 0, \ldots, n-1$. One can easily see that for $j=l$, $\lambda_l$ is purely imaginary at $F_{\Hf}$, i.e.\ $\lambda_l(l,n) = -i \omega_0$, where $\omega_0$ is also expressed in $l$ and $n$:
\begin{equation}\label{eq:imaginarypartev}
  \omega_0 (l,n) = -\frac{g(l,n)}{f(l,n)} = \frac{\cos\tfrac{\pi l}{n}}{\sin\tfrac{\pi l}{n}},
\end{equation}
for $0<l<\tfrac{n}{2}, l\neq\tfrac{n}{3}$.

\begin{proof}[Proof of statement 2]
  Let $\tilde{f}$ be defined by $\tilde{f}(y) := \cos y - \cos 2y$ such that $\tilde{f}(\tfrac{2\pi l}{n}) = f(l,n)$, i.e.\ $\tilde{f}$ is equal to the function $f$ with the discrete points~$\tfrac{2\pi j}{n}$ replaced by the continuous variable $y$, see Figure~\ref{fig:Lzevsplitting}. By the definition of $F_{\Hf}$ in~\eqref{eq:Hopfbifurcationvalue}, its positive, respectively negative, values with the smallest absolute value occur at the maximum, respectively minimum, of the function $\tilde{f}$.

  The extreme values of $\tilde{f}$ are obtained by
    \begin{equation*}
        \begin{aligned}
        0 &= \tfrac{\dd \tilde{f}}{\dd y} = 2\sin 2y - \sin y \\
        &= (4\cos y - 1)\sin y,
        \end{aligned}
    \end{equation*}
  and, hence, $y=k \pi$ for $k\in \mathbb{Z}$ or $\cos y =\tfrac{1}{4}$. The cases $y = k\pi$ give the global minimum of $\tilde{f}$ if $k$ is odd: $\tilde{f}(\pi) = -2$ (if $k$ is even, we have a local minimum). Therefore, the upper bound on the negative values of $F_{\Hf}$ is equal to $\tfrac{1}{\tilde{f}(\pi)} = -\tfrac{1}{2}$. This value can be obtained from $f(l,n)$ only if we take $l = \tfrac{n}{2}$, which is, however, excluded by the assumptions on $l$ and, hence, will never be attained. The solution $y_{\text{top}}=\cos^{-1}\left(\tfrac{1}{4}\right)$ gives the maximum of $\tilde{f}$ as $\tilde{f}(y_{\text{top}}) = \tfrac{9}{8}$. Therefore the lowest possible positive value for which a Hopf bifurcation can occur is $\tfrac{1}{\tilde{f}(y_{\text{top}})} = \tfrac{8}{9}$.

\paragraph{Upper bound of $F_{\Hf}$}The positive values with the largest absolute value are obtained as follows. Since there are only finitely many $l$ satisfying $0 < l < \tfrac{n}{2}$, we know that these values of $F_{\Hf}$ should be bounded for any $n$. The largest value for $F_{\Hf}$ is obtained when $f(l,n)$ is the smallest.

\begin{claim}
The smallest value of $f(l,n)$ for every $n\geq 4$ is obtained at $l=1$ except when $n=7$, in which case we have to take $l=2$.
\end{claim}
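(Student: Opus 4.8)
The plan is to reduce the statement to a single two-term comparison and then settle it by elementary trigonometry.

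\emph{Reduction to two candidates.} We are in the setting of positive Hopf values, so only the indices with $f(l,n)>0$ are relevant; writing $f(l,n)=\tilde f(\tfrac{2\pi l}{n})$ with $\tilde f(y)=\cos y-\cos 2y$ and recalling from the proof of statement~2 that $\tilde f>0$ precisely on $(0,\tfrac{2\pi}{3})$, this means $0<l<\tfrac{n}{3}$, i.e.\ $1\le l\le l_{\max}:=\lceil n/3\rceil-1$. Also from the proof of statement~2, $\tilde f'(y)=\sin y\,(4\cos y-1)$, so $\tilde f$ is strictly increasing on $(0,y_{\text{top}}]$ and strictly decreasing on $[y_{\text{top}},\tfrac{2\pi}{3})$ with $y_{\text{top}}=\arccos\tfrac14$. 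Since the minimum of a unimodal function over a finite set of interior points is always attained at one of the two outermost points, we get $\min_{l}f(l,n)=\min\{f(1,n),\,f(l_{\max},n)\}$. It therefore suffices to prove $f(1,n)\le f(l_{\max},n)$ for every $n\ge 4$ with $n\neq 7$, and $f(1,7)>f(2,7)$.

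\emph{The comparison.} If $4\le n\le 6$ then $l_{\max}=1$ and there is nothing to prove, so assume $n\ge 7$ (equivalently $l_{\max}\ge 2$) and write $n=3r+s$ with $s\in\{0,1,2\}$, so $l_{\max}=r-1$ if $s=0$ and $l_{\max}=r$ otherwise. Using $f(l,n)=\cos\tfrac{2\pi l}{n}-\cos\tfrac{4\pi l}{n}=2\sin\tfrac{3\pi l}{n}\sin\tfrac{\pi l}{n}$ (so $f(1,n)=2\sin\tfrac{3\pi}{n}\sin\tfrac{\pi}{n}$) and substituting the three values of $l_{\max}$ gives
\begin{align*}
  f(l_{\max},n) &= 2\sin\tfrac{3\pi}{n}\sin(\tfrac{\pi}{3}-\tfrac{\pi}{n}) \quad (s=0),\\
  f(l_{\max},n) &= 2\sin\tfrac{\pi}{n}\sin(\tfrac{\pi}{3}-\tfrac{\pi}{3n}) \quad (s=1),\\
  f(l_{\max},n) &= 2\sin\tfrac{2\pi}{n}\sin(\tfrac{\pi}{3}-\tfrac{2\pi}{3n}) \quad (s=2).
\end{align*}
For $s=0$ the factor $2\sin\tfrac{3\pi}{n}>0$ cancels and $f(1,n)\le f(l_{\max},n)$ becomes $\sin\tfrac{\pi}{n}\le\sin(\tfrac{\pi}{3}-\tfrac{\pi}{n})$, which holds since both angles lie in $(0,\tfrac{\pi}{2})$ and $\tfrac{\pi}{n}\le\tfrac{\pi}{3}-\tfrac{\pi}{n}$ for $n\ge 6$. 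For $s=1$ the factor $2\sin\tfrac{\pi}{n}>0$ cancels, reducing the inequality to $\sin\tfrac{3\pi}{n}\le\sin(\tfrac{\pi}{3}-\tfrac{\pi}{3n})$; for $n\ge 7$ both angles lie in $(0,\tfrac{\pi}{2})$, so this is equivalent to $\tfrac{3\pi}{n}\le\tfrac{\pi}{3}-\tfrac{\pi}{3n}$, i.e.\ $n\ge 10$ (equality at $n=10$, where the minimum is attained at both $l=1$ and $l=l_{\max}$); while for the sole remaining value $n=7$ one has $\tfrac{3\pi}{7}>\tfrac{2\pi}{7}=\tfrac{\pi}{3}-\tfrac{\pi}{21}$, hence $f(1,7)>f(2,7)$, which is exactly the stated exception (for $n=7$ the only admissible indices are $l\in\{1,2\}$).

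\emph{The mildly delicate case, and conclusion.} The only step requiring more than a cancellation is $s=2$, where the trigonometric factors do not line up. There I would use a crude estimate: for $n\ge 8$ one has $\tfrac{\pi}{3}-\tfrac{2\pi}{3n}\ge\tfrac{\pi}{4}$, hence $\sin(\tfrac{\pi}{3}-\tfrac{2\pi}{3n})\ge\tfrac{1}{\sqrt{2}}$, and therefore
\begin{equation*}
  f(l_{\max},n)\ge\sqrt{2}\,\sin\tfrac{2\pi}{n}=2\sqrt{2}\,\sin\tfrac{\pi}{n}\cos\tfrac{\pi}{n}\ge 2\sin\tfrac{\pi}{n}\ge 2\sin\tfrac{3\pi}{n}\sin\tfrac{\pi}{n}=f(1,n),
\end{equation*}
using $\cos\tfrac{\pi}{n}\ge\tfrac{1}{\sqrt{2}}$ (valid for $n\ge 4$) and $\sin\tfrac{3\pi}{n}\le 1$. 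Combining the three residue classes with the trivial range $n\le 6$ and the exceptional dimension $n=7$ establishes the claim. The analytic content is minimal — unimodality of $\tilde f$ plus monotonicity of $\sin$ on $[0,\tfrac{\pi}{2}]$ — so the main obstacle is purely bookkeeping: pinning down $l_{\max}$ in terms of $n\bmod 3$, checking which small $n$ make the comparison vacuous, and correctly isolating the dimension $n=7$.
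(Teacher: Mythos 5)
Your proof is correct and takes essentially the same route as the paper: both arguments reduce the problem, via the increasing--then--decreasing shape of $\tilde f$ on $(0,\tfrac{2\pi}{3})$, to comparing $f(1,n)$ with $f$ at the largest admissible index below $\tfrac{n}{3}$, and then settle that comparison by a case analysis on $n \bmod 3$ with elementary trigonometry. The differences are cosmetic: you state the reduction to the two extreme indices explicitly, work with the product form $f(l,n)=2\sin\tfrac{3\pi l}{n}\sin\tfrac{\pi l}{n}$ and cancel a common factor (with a crude bound in the case $n\equiv 2 \bmod 3$) where the paper factors the difference $f(1,n)-f(l',n)$ directly, and you recover the $n=7$ exception inside the $n\equiv 1\bmod 3$ case instead of checking it separately.
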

\begin{proof}[Proof of the claim]
    For $n = 4,5$ or $6$, this is trivial since $l=1$ is the only integer satisfying $0<l<\tfrac{n}{3}$. In the case $n=7$, we have two integers that satisfy $0<l<\tfrac{n}{3}$ and it is easily checked that $f(1,7) > f(2,7)$ gives the desired exception. So, all we need to show in order to verify our claim is that
\begin{equation}\label{eq:difference}
    f(1,n) - f(l',n) \leq 0
\end{equation}
holds for any $n\geq8$, where $l'$ is the largest integer for which $l' < \tfrac{n}{3}$. Since the function $f$ becomes negative for $l> \tfrac{n}{3}$, this $l'$ will be the largest integer for which $f$ is positive and becomes close to 0, see Figure~\ref{fig:Lzevsplitting}. This gives rise to the following three cases:
\begin{enumerate}
  \item If $n = 0 \bmod 3$, then $l' = \tfrac{n}{3}-1$. Then equation~\eqref{eq:difference} can be simplified to
    \begin{align*}
     \hspace{-2em}
     f(1,n) - f(l',n) &= \cos\tfrac{2\pi}{n} - \cos\tfrac{4\pi}{n} - \cos 2\pi(\tfrac{1}{3}-\tfrac{1}{n}) + \cos 4\pi(\tfrac{1}{3}-\tfrac{1}{n})\\
     &= 2\sqrt{3}\sin\tfrac{3\pi}{n}\sin(\tfrac{\pi}{n}-\tfrac{\pi}{6}).
    \end{align*}
    Since we have to consider $n\geq 9$ here, the first sine-term is always positive, while the second one is always negative (by the fact that its entry is negative and bigger than $-\tfrac{\pi}{6}$). Hence, $f(1,n) - f(l',n) < 0$ holds for these particular values of $n$.
  \item If $n = 1 \bmod 3$, then $l' = \tfrac{n-1}{3}$. Then equation~\eqref{eq:difference} reduces to
    \begin{align*}
     \hspace{-2em}
     f(1,n) - f(l',n) &= \cos\tfrac{2\pi}{n} - \cos\tfrac{4\pi}{n} - \cos\tfrac{2\pi}{3}(1-\tfrac{1}{n}) + \cos\tfrac{4\pi}{3}(1-\tfrac{1}{n})\\
     &= 4\sin\tfrac{\pi}{n}\cos(\tfrac{8+n}{6n}\pi) \sin(\tfrac{10-n}{6n}\pi).
    \end{align*}
    In this case, we have to take $n\geq 10$, for which the first sine-term and the cosine-term are both always positive. The second sine-term, $\sin\tfrac{10-n}{6n}\pi$ is exactly equal to 0 if $n=10$ (compare this with criterion~\ref{crit:HH1} in Corollary~\ref{cor:HHBif}) and strictly less than 0 for $n > 10$. This gives the desired inequality.
  \item If $n = 2 \bmod 3$, then $l' = \tfrac{n-2}{3}$ and we have that
    \begin{align*}
     \hspace{-2em}
     f(1,n) - f(l',n) &= \cos\tfrac{2\pi}{n} - \cos\tfrac{4\pi}{n} - \cos\tfrac{2\pi}{3}(1-\tfrac{2}{n}) + \cos\tfrac{4\pi}{3}(1-\tfrac{2}{n})\\
     &= 2\sin\tfrac{\pi}{n}\left(\sin\tfrac{3\pi}{n}- 2\cos\tfrac{\pi}{n}\sin\tfrac{\pi}{3}(1-\tfrac{2}{n})\right).
     \end{align*}
     Observe that the part in brackets is monotonically decreasing, since the first sine-term decreases as $n$ increases, while both components $\cos\tfrac{\pi}{n}$ and $\sin\tfrac{\pi}{3}(1-\tfrac{2}{n})$ increase. Moreover, for $n=8$ --- the least possible $n$ in this case --- we find that
     \begin{equation*}
      \sin\tfrac{3\pi}{n}- 2\cos\tfrac{\pi}{n}\sin\tfrac{\pi}{3}(1-\tfrac{2}{n}) = -\sin\tfrac{\pi}{8} < 0,
     \end{equation*}
     which implies that also in this case equation~\eqref{eq:difference} holds for any $n\geq 8$.
\end{enumerate}
Now, we have established equation~\eqref{eq:difference} for any $n\geq 4$, except $n=7$, and for these $n$ we can conclude that $l=1$ is the right choice to get the lowest value of $f(l,n)$.
\end{proof}

\noindent \emph{Continuation of proof of statement 2.} We can conclude that the upper bound (actually, a maximum) on $F_{\Hf}$ is given by
  \begin{equation*}
    F_{\max}(n) = \left\{
        \begin{array}{ll}
            \tfrac{1}{f(2,7)} &\quad  \text{if}\ n = 7,\\
            \tfrac{1}{f(1,n)} &\quad  \text{otherwise}.
        \end{array}
    \right.
  \end{equation*}

\paragraph{Lower bound of $F_{\Hf}$}In the case of negative $F_{\Hf}$-values, we do not have an eigenvalue crossing for $n = 4$ and 6, so $F_{\min} = -\tfrac{1}{2}$ (this gives the empty set). For all other $n$ we need the integer $l > \tfrac{n}{3}$ which is the closest to $\tfrac{n}{3}$ on the right to get the largest value of $f$, see Figure~\ref{fig:Lzevsplitting}. If we write $n = 3r + s$, where $r,s\in\Nat$ with $s = n\bmod 3$, then we see that
  \begin{equation*}
    r \leq \frac{n}{3} = \frac{3r +s}{3} = r + \frac{s}{3} < r+1.
  \end{equation*}
Therefore, we have to take $l=r+1$ to obtain the lowest integer satisfying $l > \tfrac{n}{3}$. Hence, the lower bound on $F_{\Hf}$ is given by
  \begin{equation*}
    F_{\min}(n) = \left\{
        \begin{array}{ll}
            -\tfrac{1}{2}       &\quad  \text{if}\ n = 4, 6,\\
            \tfrac{1}{f(r+1,n)} &\quad  \text{otherwise},
        \end{array}
    \right.
  \end{equation*}
  which is again an actual minimum for given $n$.
\end{proof}

\subsection{Proofs for the Hopf-Hopf bifurcation}\label{sec:HHBif}
The occurrence of Hopf-Hopf bifurcations is stated in Theorem~\ref{thm:HHBif1} and Corollary~\ref{cor:HHBif}; we will prove both here.

\begin{proof}[Proof of Theorem~\ref{thm:HHBif1}] Throughout the proof we assume that $l_1 < l_2$, without loss of generality.

Suppose that at a certain parameter value $F$ a Hopf-Hopf bifurcation occurs, that means, the $l_1$-th and $l_2$-th eigenvalue pairs both have real part equal to 0. So, we need to have that $F_{\Hf}(l_1,n) = F_{\Hf}(l_2,n)$, or, equivalently,
  \begin{equation}\label{eq:LzHHcondition1}
    f(l_1,n) = f(l_2,n),
  \end{equation}
  where
  \begin{equation}\label{eq:LzHHconditionl1}
  0 < l_1 < \tfrac{n}{2\pi}\cos^{-1}\left(\tfrac{1}{4}\right) < l_2 < \tfrac{n}{3}.
  \end{equation}
  (The second condition follows from the fact that if $l_1$ and $l_2$ give the same value for $f$, then for the continuous function $\tilde{f}$ we need $y_1 = \tfrac{2\pi l_1}{n}$ to be left and $y_2 = \tfrac{2\pi l_2}{n}$ right of the top $y_{\text{top}}=\cos^{-1}\left(\tfrac{1}{4}\right)$ in the domain of consideration, $(0,\pi)$. So, $y_1$ and $y_2$ have to satisfy $0 < y_1 < y_{\text{top}} < y_2 < \tfrac{2\pi}{3}$ (see Figure~\ref{fig:Lzevsplitting} for the picture). This is equivalent to equation~\eqref{eq:LzHHconditionl1}.)

Since $f(l,n)$ can be written as
  \begin{equation*}\label{eq:Lzfrewritten1}
    f(l,n) = -2 \cos^2 \tfrac{2\pi l}{n} + \cos\tfrac{2\pi l}{n} + 1,
  \end{equation*}
  the substitution $x=\cos\tfrac{2\pi l}{n}$ gives the function
  \begin{equation*}\label{eq:Lzsimplifiedf1}
    h(x) = -2x^2 + x + 1.
  \end{equation*}
  Condition~\eqref{eq:LzHHcondition1} then becomes
  \begin{equation*}\label{eq:LZHHbifscaled1}
    h\left(\cos\tfrac{2\pi l_1}{n}\right) = h\left(\cos\tfrac{2\pi l_2}{n}\right).
  \end{equation*}
  By condition~\eqref{eq:LzHHconditionl1} on $l_1$ and $l_2$, $\cos\tfrac{2\pi l_1}{n}$ is on the left and $\cos\tfrac{2\pi l_2}{n}$ is on the right of the maximum $x=\tfrac{1}{4}$ of $h$. Since the function $h(x)$ is symmetric around the maximum, $l_1, l_2$ and $n$ should satisfy
  \begin{equation*}
    \tfrac{1}{2}\left(\cos\tfrac{2\pi l_1}{n} +\cos\tfrac{2\pi l_2}{n}\right) = \tfrac{1}{4}.
  \end{equation*}
This provides the condition for a Hopf-Hopf bifurcation to occur.

Conversely, suppose that equation~\eqref{eq:LzHHbifcond1} holds for $l_1$ and $l_2$ satisfying $0 < l_1, l_2 < \tfrac{n}{2}$, $l_1, l_2 \neq \tfrac{n}{3}$ and $l_1 \neq l_2$. The existence of a Hopf-Hopf bifurcation can be derived from Lemma~\ref{lem:Bifevcrossing} by showing that the bifurcation parameters $F_{\Hf}$ corresponding to each of these eigenvalue pairs coincide, i.e.\ $F_{\Hf}(l_1,n) = F_{\Hf}(l_2,n)$.

Let us denote $y_1 = \tfrac{2\pi l_1}{n}$ and $y_2 = \tfrac{2\pi l_2}{n}$, then equation~\eqref{eq:LzHHbifcond1} becomes:
    \begin{equation*}
    \tfrac{1}{2} = \cos y_1 + \cos y_2.
  \end{equation*}
From this equation, we obtain $\cos y_2 = \tfrac{1}{2} - \cos y_1$ and, with a little trigonometry, its double angle reads
  \begin{equation*}
    \cos 2 y_2 = 2 \cos^2 y_2 - 1 = -\tfrac{1}{2} - 2 \cos y_1 + 2 \cos^2 y_1 = \tfrac{1}{2} - 2 \cos y_1 + \cos 2 y_1.
  \end{equation*}
Now, observe that the following holds:
  \begin{align*}
    \tilde{f}(y_2) &= \cos y_2 - \cos 2 y_2 \\
     &= \left(\tfrac{1}{2} - \cos y_1\right) - \left(\tfrac{1}{2} - 2 \cos y_1 + \cos 2 y_1\right)\\
     &= \cos y_1 -\cos 2 y_1 = \tilde{f}(y_1).
  \end{align*}
  Hence, it holds that $f(l_1,n) = \tilde{f}(y_1) = \tilde{f}(y_2) = f(l_2,n)$ and therefore $F_{\Hf}(l_1,n) = F_{\Hf}(l_2,n)$ as desired.
\end{proof}

\begin{proof}[Proof of Corollary~\ref{cor:HHBif}]
From equation~\eqref{eq:LzHHbifcond1} we can determine explicit combinations of $l_1, l_2$ and $n$ for which a Hopf-Hopf bifurcation will occur. To begin with the easiest one, criterion~\ref{crit:HH2}: choose $l_2/n$ such that $\cos\tfrac{2\pi l_2}{n} = 0$, i.e.\ $l_2/n=1/4$. This implies that $l_1/n$ has to be equal to $1/6$ to satisfy equation~\eqref{eq:LzHHbifcond1}. Since all numbers have to be integers, we should take $n=12m$ with $m\in\Nat$ and hence, $l_1 = 2m$ and $l_2 = 3m$.

Criterion~\ref{crit:HH1} is obtained by observing that
  \begin{equation*}
    \cos\tfrac{\pi}{5} = \tfrac{1}{4}(1+\sqrt{5}), \quad \textrm{and}
    \quad \cos\tfrac{3\pi}{5} = \tfrac{1}{4}(1-\sqrt{5}),
  \end{equation*}
so that we have the  relations $2l_1/n = 1/5$ and $2l_2/n = 3/5$. These are satisfied by taking multiples of $m\in\Nat$ as follows: $n=10 m$ and $l_1 =~m, l_2 = 3 m$.
\end{proof}

\subsection{Proof of Theorem~\ref{thm:HBif}}\label{sec:LC1}
To prove the occurrence of a Hopf bifurcation we need to show that (\textsl{a}) there is an eigenvalue pair crossing the imaginary axis; and (\textsl{b}) the first Lyapunov coefficient $\ell_1(l,n)$ is nonzero at $F_{\Hf}$ \cite{Kuznetsov04}. Lemma~\ref{lem:Bifevcrossing} shows that the transversality condition (\textsl{a}) holds. Below, we prove the nondegeneracy condition (\textsl{b}), while we also clarify the condition for $\ell_1(l,n)$ to be positive or negative.
In the following, let $l$ and $n$ be as in Lemma~\ref{lem:Bifevcrossing} and assume that there is no $l_2\neq l$ which satisfies both Lemma~\ref{lem:Bifevcrossing} and equation~\eqref{eq:LzHHbifcond1} (with $l_1 = l$).

The \emph{first Lyapunov coefficient} $\ell_1$ corresponding to a Hopf bifurcation of the equilibrium $x_F$ for the $l$-th eigenvalue pair is given by the following invariant expression \cite{Kuznetsov04}:
\begin{align}\label{eq:lyapunovcoefficient}
  \ell_1(F_{\Hf}(l,n)) = \frac{1}{2\omega_0}{\rm Re}\,
    &\left[\langle p, C(q,q,\bar{q})\rangle - 2\langle p, B(q, A^{-1}B(q,\bar{q}))\rangle + \right.\nonumber\\
    &\left.+ \langle p, B(\bar{q}, (2i\omega_0 I_n - A)^{-1}B(q,q))\rangle\right],
\end{align}
where $A$ is the Jacobian matrix and $B$ and $C$ are multilinear functions obtained via the Taylor expansion of the nonlinear part of~\eqref{eq:Lorenz96eq}. The vectors $q$ and $p$ are complex eigenvectors of $A$ and $A^\top$, respectively, and have to be taken such that $q$ is associated with the eigenvalue which crosses the imaginary axis at $F_{\Hf}(l,n)$ \emph{and} which has positive imaginary part $\omega_0$, while $p$ is its adjoint eigenvector. In other words, $q$ and $p$ have to be eigenvectors corresponding to $\lambda_{n-l}$ and $\bar{\lambda}_{n-l}$, respectively. We will specify them later on. Furthermore, note that the inner product on $\mathbb{C}^n$ is defined such that it is antilinear in the first component, i.e.
\begin{equation*}
  \langle x, y\rangle := \sum_{k=0}^{n-1} \bar{x}_k y_k.
\end{equation*}

In this section we will simplify formula~\eqref{eq:lyapunovcoefficient} to an analytic expression which depends on the variables $l$ and $n$ only and whose sign is easily determined. We do this by taking advantage of the fact that the Jacobian matrix at $x_F$ is circulant. The first step is to simplify the expression as much as possible in a general setting. Next, we plug in all the known terms specific to our system. In the last part we determine the condition to have either a positive or a negative Lyapunov coefficient, proving the remaining part of Theorem~\ref{thm:HBif}.

\subsubsection{Simplifying the expression}
First of all, note that by a change of coordinates, $y_j = x_j - F$ (which translates the equilibrium $x_F$ to the origin), we can write the Lorenz-96 model~\eqref{eq:Lorenz96} in the following form:
\begin{equation*}
    \dot{y} = Ay + \tfrac{1}{2}B(y,y), \qquad y \in \mathbb{R}^n,
\end{equation*}
where $A$ is the $n\times n$ Jacobian matrix at the origin and $B : \mathbb{R}^n\times\mathbb{R}^n\to\mathbb{R}^n$ is a bilinear map whose $k$-th component is given by
\begin{equation}\label{eq:Lz96bilinearmap}
  B_k(x,y) = x_{k-1}(y_{k+1}-y_{k-2}) + y_{k-1}(x_{k+1}-x_{k-2}).
\end{equation}

Since the cubic terms are absent in this model, we can immediately simplify~\eqref{eq:lyapunovcoefficient} to
\begin{equation}\label{eq:lyapcoeffsimplified}
    \hspace{-0.1em}
    \ell_1 = \frac{1}{2\omega_0}{\rm Re}\,[ - 2\langle p, B(q, A^{-1}B(q,\bar{q}))\rangle
    + \langle p, B(\bar{q}, (2i\omega_0 I_n - A)^{-1}B(q,q))\rangle].
\end{equation}
We split this equation into two components as follows:
\begin{equation*}
  \ell_1 = \frac{1}{2\omega_0}{\rm Re}[- 2 \ell_{1,a} + \ell_{1,b}],
\end{equation*}
where
\begin{align}
  \ell_{1,a} &:=\langle p, B(q, A^{-1}B(q,\bar{q}))\rangle \label{eq:LC1comp}\\
  \ell_{1,b} &:= \langle p, B(\bar{q}, (2i\omega_0 I_n- A)^{-1}B(q,q))\rangle.\label{eq:LC2comp}
\end{align}

In the Lorenz-96 case the matrix $A$ is circulant for the trivial equilibrium $x_F$ and therefore unitarily equivalent with a diagonal matrix $D$ \cite{Gray06}:
\begin{equation*}
    A = UDU^*, \qquad UU^* = U^*U = I.
\end{equation*}
Here, the diagonal entries of $D$ are the eigenvalues of $A$ and the columns of $U$ are the eigenvectors of $A$ in the same order.
Moreover, since the matrix $A$ is real, we have that $A^\top = UD^*U^*$ which means that
\begin{equation*}
  Av = \lambda v \quad\Leftrightarrow\quad A^\top v = \bar{\lambda}v.
\end{equation*}
Since $q$ and $p$ are eigenvectors corresponding to the $(n-l)$-th eigenvalue of $A$ and $A^\top$, respectively, this means that at the eigenvalue crossing we have
\begin{equation*}
  Aq = \lambda_{n-l} q = i\omega_0 q
\quad\text{and}\quad
  A^\top p = \bar{\lambda}_{n-l} p = -i\omega_0 p,
\end{equation*}
so that we can take
\begin{equation}\label{eq:vectorspq}
  q = p = v_{n-l},
\end{equation}
the normalized eigenvector~\eqref{eq:Lzeigenvector} of $A$ corresponding to the $(n-l)$-th eigenvalue. This fact already shows that the only values we need to compute formula~\eqref{eq:lyapcoeffsimplified} are $F_{\Hf}, \rho_j$, and $\omega_0$, which are all determined by the choice of $l$ and $n$.

\paragraph{Elimination of inverse matrices}
The next step is to remove the inverse matrices in formula~\eqref{eq:lyapcoeffsimplified}. The fact that the eigenvectors of $A$ form a unitary matrix implies that we can express any $x \in \mathbb{C}^n$ in terms of the eigenvectors of $A$ using a standard Fourier decomposition
\begin{equation*}
  x = \sum_{j=0}^{n-1} \langle v_j ,x\rangle v_j.
\end{equation*}
This makes it easy to determine how $A$ and its inverses act on any vector $x$:
\begin{eqnarray*}
  Ax & = & \sum_{j=0}^{n-1} \lambda_j\langle  v_j,x\rangle v_j,\\
  A^{-1}x & = & \sum_{j=0}^{n-1}  \frac{\langle v_j,x\rangle}{\lambda_j} v_j,\\
  (2i\omega_0 I_n - A)^{-1}x & = & \sum_{j=0}^{n-1}  \frac{\langle v_j, x\rangle}{2i\omega_0-\lambda_j} v_j,
\end{eqnarray*}
where we use the relation $A^{-1} = U D^{-1} U^*$ in the second line. In the following we will implement these relations in both equations~\eqref{eq:LC1comp} and~\eqref{eq:LC2comp}. Note that up to this step, we only used the property that $A$ is normal.

\paragraph{First component $\ell_{1,a}$}By the bilinearity of the operator $B$, the linearity of the inner product in the second component and the expression for the inverse of $A$, the first part of the first Lyapunov coefficient~\eqref{eq:LC1comp} can be written as
\begin{eqnarray*}
  \ell_{1,a}=\langle p, B(q, A^{-1}B(q,\bar{q}))\rangle
   & = & \langle p, B(q, \sum_{j=0}^{n-1} \frac{1}{\lambda_j} \langle v_j, B(q,\bar{q})\rangle v_j)\rangle\\
   & = & \langle p, \sum_{j=0}^{n-1} \frac{1}{\lambda_j} \langle v_j, B(q,\bar{q})\rangle B(q, v_j)\rangle\\
   & = & \sum_{j=0}^{n-1} \frac{1}{\lambda_j} \langle v_j, B(q,\bar{q})\rangle\langle p, B(q, v_j)\rangle. \\
\end{eqnarray*}
In the Lorenz-96 case, the inner product terms in the last line become
\begin{equation}\label{eq:lyapcoeff1a}
\begin{aligned}
  \langle v_j, B(q,\bar{q})\rangle &= \sum_{k=0}^{n-1} \bar{v}^{k}_j \left(q_{k-1}(\bar{q}_{k+1} - \bar{q}_{k-2}) + \bar{q}_{k-1}(q_{k+1} - q_{k-2})\right),\\
  \langle p, B(q,v_j)\rangle &= \sum_{k=0}^{n-1} \bar{p}_k \left(q_{k-1}(v_j^{k+1} - v_j^{k-2}) + v_j^{k-1}(q_{k+1} - q_{k-2})\right),
\end{aligned}
\end{equation}
by equation~\eqref{eq:Lz96bilinearmap}.

We can fill in the explicit relations from equations~\eqref{eq:Lzeigenvector} and~\eqref{eq:vectorspq}, i.e.~$v_j^k = \rho_j^k/\sqrt{n}$ and $p_k = q_k = v_{n-l}^k$, to obtain
\begin{equation}\label{eq:LC1a}
\begin{aligned}
    \langle v_j, B(q,\bar{q})\rangle &= \frac{1}{n\sqrt{n}}(\rho_l^{-2}-\rho_l^{-1}-\rho_l+\rho_l^{2}) \sum_{k=0}^{n-1}\rho_j^{-k},\\
    \langle p, B(q,v_j)\rangle &= \frac{1}{n\sqrt{n}} \left(\rho_l(\rho_j-\rho_j^{-2})+\rho_j^{-1}(\rho_l^{-1}-\rho_l^{2})\right) \sum_{k=0}^{n-1}\rho_j^{k}.
  \end{aligned}
\end{equation}
Note that the sums at the end of both equations are conjugate to each other. For $j\neq 0 \bmod n$, we can use a result from finite geometric series and the fact that $\rho_{j}^{n} = 1$ to compute
\begin{equation*}\label{eq:finitegeometricseries}
  \sum_{k=0}^{n-1} \rho_{j}^{k} = \frac{1-\rho_{j}^n}{1-\rho_{j}} = 0, \qquad j \neq 0 \bmod n.
\end{equation*}
For $j=0$, we have that $\rho_{0} = 1$ and so the sum becomes
\begin{equation*}
 \sum_{k=0}^{n-1} \rho_{0}^{k} = \sum_{k=0}^{n-1} 1 = n.
\end{equation*}
In brief, we found that
\begin{equation}\label{eq:sumrootofunity}
  \sum_{k=0}^{n-1} \rho_{j}^{k} = \left\{
        \begin{array}{ll}
            n &\quad  \text{if}\ j = 0 \bmod n,\\
            0 &\quad  \text{otherwise}.
        \end{array}
    \right.
\end{equation}
Thus, equations~\eqref{eq:LC1a} are equal to 0 for all $j$ but 0. Hence, only the term for $j=0$ is left in the summation for $\ell_{1,a}$, which then reduces to
\begin{align*}
  \ell_{1,a}(l,n) &= \frac{1}{n}\frac{1}{\lambda_0}(\rho_l^{-2}-\rho_l^{-1}-\rho_l+\rho_l^{2})(\rho_l^{-1}-\rho_l^2) \nonumber\\
  &= -\frac{1}{n}(\rho_l^{-3}-\rho_l^{-2}+2\rho_l^1+\rho_l^{3}-\rho_l^{4}-2).
\end{align*}

In the computation of formula~\eqref{eq:lyapcoeffsimplified} we only need the real part of the last expression. It mainly consists of powers of $\rho_l$, so Euler's formula yields
\begin{align}\label{eq:LC1Re}
  {\rm Re}\ \ell_{1,a}(l,n) &= -\frac{1}{n}{\rm Re}\left[\rho_l^{-3}-\rho_l^{-2}+2\rho_l^1+\rho_l^{3}-\rho_l^{4}-2\right] \nonumber\\
  &= -\frac{1}{n}\left(2\cos(\tfrac{2\pi l}{n})- \cos(\tfrac{4\pi l}{n})+2\cos(\tfrac{6\pi l}{n})-\cos(\tfrac{8\pi l}{n})-2\right)\nonumber\\
  &= -\frac{4}{n}\sin^2(\tfrac{3\pi l}{n})\left(\cos(\tfrac{2\pi l}{n})-1\right).
\end{align}

\paragraph{Second component $\ell_{1,b}$}The second part can be simplified similarly. Using the bilinearity of the operator $B$, the linearity of the inner product in the second component and the expression for the inverse matrix, the second part of the first Lyapunov coefficient~\eqref{eq:LC2comp} is given by
\begin{equation*}
  \begin{aligned}
    \ell_{1,b}= \langle p, B(\bar{q}, (2i\omega_0 I_n - A)^{-1}B(q,q))\rangle
    = &\sum_{j=0}^{n-1} \frac{\langle v_j, B(q,q)\rangle \langle p, B(\bar{q}, v_j)\rangle}{2i\omega_0-\lambda_j}.
    \end{aligned}
\end{equation*}
In the case of the Lorenz-96 model, each of the inner product parts can be written as
\begin{equation*}
\begin{aligned}
  \langle v_j, B(q,q)\rangle &= \sum_{k=0}^{n-1} 2\bar{v}_j^k \left(q_{k-1}(q_{k+1} - q_{k-2})\right),\\
  \langle p, B(\bar{q}, v_j)\rangle &= \sum_{k=0}^{n-1} \bar{p}_k \left(\bar{q}_{k-1}(v_j^{k+1} - v_j^{k-2}) + v_j^{k-1}(\bar{q}_{k+1} - \bar{q}_{k-2})\right),
\end{aligned}
\end{equation*}
by a small adjustment of equations~\eqref{eq:lyapcoeff1a}.

As before, we can replace $p,q$ and $v$ by powers of $\rho$ times a constant:
\begin{equation}\label{eq:LC2a}
\begin{aligned}
  \langle v_j, B(q,q)\rangle &= \frac{2}{n\sqrt{n}}(1-\rho_l^3) \sum_{k=0}^{n-1}\rho_j^{-k}\rho_l^{-2k},\\
  \langle p, B(\bar{q}, v_j)\rangle &= \frac{1}{n\sqrt{n}} \left(\rho_l^{-1}(\rho_j-\rho_j^{-2}) + \rho_j^{-1}(\rho_l-\rho_l^{-2})\right) \sum_{k=0}^{n-1}\rho_j^{k}\rho_l^{2k}.
\end{aligned}
\end{equation}
Note that the sums in both equations are again conjugate to each other. The summand of the sum in the second equation can be written as $\rho_j^{k}\rho_l^{2k} = \rho_{j+2l}^{k}$. Formula~\eqref{eq:sumrootofunity}, with $j$ replaced by $j+2l$, then shows that
\begin{equation*}
  \sum_{k=0}^{n-1} \rho_{j+2l}^{k} = \left\{
        \begin{array}{ll}
            n &\quad  \text{if}\ j + 2l = 0 \bmod n,\\
            0 &\quad  \text{otherwise}.
        \end{array}
    \right.
\end{equation*}
The sum in the first equation of~\eqref{eq:LC2a} gives exactly the same result, by conjugacy. Therefore, in both cases only the terms with $j=n-2l$ are nonzero:
\begin{equation*}
\begin{aligned}
  \langle v_{n-2l}, B(q,q)\rangle &= \frac{2}{\sqrt{n}}\left(1-\rho_l^3\right),\\
  \langle p, B(\bar{q}, v_{n-2l})\rangle &= \frac{1}{\sqrt{n}}\left(\rho_l^{-3}-1\right).
\end{aligned}
\end{equation*}

These results reduce $\ell_{1,b}$ to
\begin{align}\label{eq:LC2}
  \ell_{1,b}(l,n) &= \frac{\tfrac{2}{\sqrt{n}}(1-\rho_l^3)\tfrac{1}{\sqrt{n}}(\rho_l^{-3}-1)} {2i\omega_0(l,n)-\lambda_{n-2l}(l,n)}\nonumber\\
  &=\frac{2}{n} \frac{\rho_l^{-3}+\rho_l^3-2}{2i\omega_0(l,n)-\overline{\lambda_{2l}(l,n)}}.
\end{align}
Note that --- as with the first component --- the summation and indices $j$ disappeared.

Again, we need the real part of~\eqref{eq:LC2} only. This is a bit more complicated than in the case of $\ell_{1,a}$. First of all, we can reduce the numerator to
\begin{align*}
  \rho_l^{-3}+\rho_l^3-2 &= e^{6\pi i l/n} + e^{-6 \pi i l/n} -2\\
  &= \cos(\tfrac{6\pi l}{n}) + i \sin(\tfrac{6\pi l}{n}) + \cos(\tfrac{-6\pi l}{n}) + i \sin(\tfrac{-6\pi l}{n}) - 2\\
  &= 2\cos(\tfrac{6\pi l}{n}) - 2\\
  &= -4\sin^2(\tfrac{3\pi l}{n}).
\end{align*}

We can take the real part of the complex denominator easily via ${\rm Re}\big(\tfrac{1}{a+b i}\big) = \tfrac{a}{a^2+b^2}$. Thus we find, using the expressions~\eqref{eq:eigenvaluejln} and~\eqref{eq:imaginarypartev},
\begin{align*}
  \hspace{-1em}{\rm Re}\left[\frac{1}{2i\omega_0(l,n)-\overline{\lambda_{2l}(l,n)}}\right]
  &= \frac{1-\tfrac{f(2l,n)}{f(l,n)}}{\left(1-\tfrac{f(2l,n)}{f(l,n)}\right)^2 + \left(\tfrac{g(2l,n)}{f(l,n)}-\frac{2g(l,n)}{f(l,n)}\right)^2} \\
  &= -\frac{2\cos(\tfrac{2\pi l}{n})+2\cos(\tfrac{4\pi l}{n})-1}{4\cos(\tfrac{2\pi l}{n})-4\cos(\tfrac{4\pi l}{n})+9}.\nonumber
\end{align*}
Finally, by substituting these intermediate results in equation~\eqref{eq:LC2}, the real part of the second component $\ell_{1,b}$ results as
\begin{equation}\label{eq:LC2Re}
  {\rm Re}\ \ell_{1,b}(l,n) = \frac{8}{n}\sin^2(\tfrac{3\pi l}{n})\frac{2\cos(\tfrac{2\pi l}{n})+2\cos(\tfrac{4\pi l}{n})-1}{4\cos(\tfrac{2\pi l}{n})-4\cos(\tfrac{4\pi l}{n})+9}.
\end{equation}

\subsubsection{Sign of the first Lyapunov coefficient}
Observe that both main components ${\rm Re}\ \ell_{1,a}$ and ${\rm Re}\ \ell_{1,b}$ only depend on $l$ and $n$. So, combining equations~\eqref{eq:LC1Re} and~\eqref{eq:LC2Re} gives an expression of the first Lyapunov coefficient~\eqref{eq:lyapcoeffsimplified} merely in terms of $l$ and~$n$:
\begin{align}\label{eq:LC}
  \hspace{-2.3em}
  \ell_1(l,n) &= \frac{1}{2\omega_0(l,n)}{\rm Re}[-2\ell_{1,a}(l,n) +\ell_{1,b}(l,n)]\nonumber\\
  &= \frac{\sin(\tfrac{\pi l}{n})}{2\cos(\tfrac{\pi l}{n})}
     \Bigg(\frac{8}{n}\sin^2(\tfrac{3\pi l}{n})\left(\cos(\tfrac{2\pi l}{n})-1\right) + \nonumber\\
   &\quad +\left.\frac{8}{n}\sin^2(\tfrac{3\pi l}{n})
     \frac{2\cos(\tfrac{2\pi l}{n})+2\cos(\tfrac{4\pi l}{n})-1}{4\cos(\tfrac{2\pi l}{n})-4\cos(\tfrac{4\pi l}{n})+9}\right) \nonumber\\
  &= \frac{4}{n}\tan(\tfrac{\pi l}{n})\sin^2(\tfrac{3\pi l}{n})\frac{5\cos(\tfrac{2\pi l}{n})+8\cos(\tfrac{4\pi l}{n})-2\cos(\tfrac{6\pi l}{n})-8}{4\cos(\tfrac{2\pi l}{n})-4\cos(\tfrac{4\pi l}{n})+9},
\end{align}
where $l$ should be taken such that $0 < l < \tfrac{n}{2}, l\neq\tfrac{n}{3}$. It is easy to see already that $\ell_1 = 0$ if we would choose $l = 0$ or $\tfrac{n}{3}$ and, moreover, that for fixed $n$ we have $\lim_{l\rightarrow n/2} \ell_1(l,n) = -\infty$, by the tangent function. Figure~\ref{fig:PlotLC} shows these properties in the (continuous) graph of formula~\eqref{eq:LC}.
\begin{figure}[ht!]
  \centering
  \includegraphics[width=\textwidth]{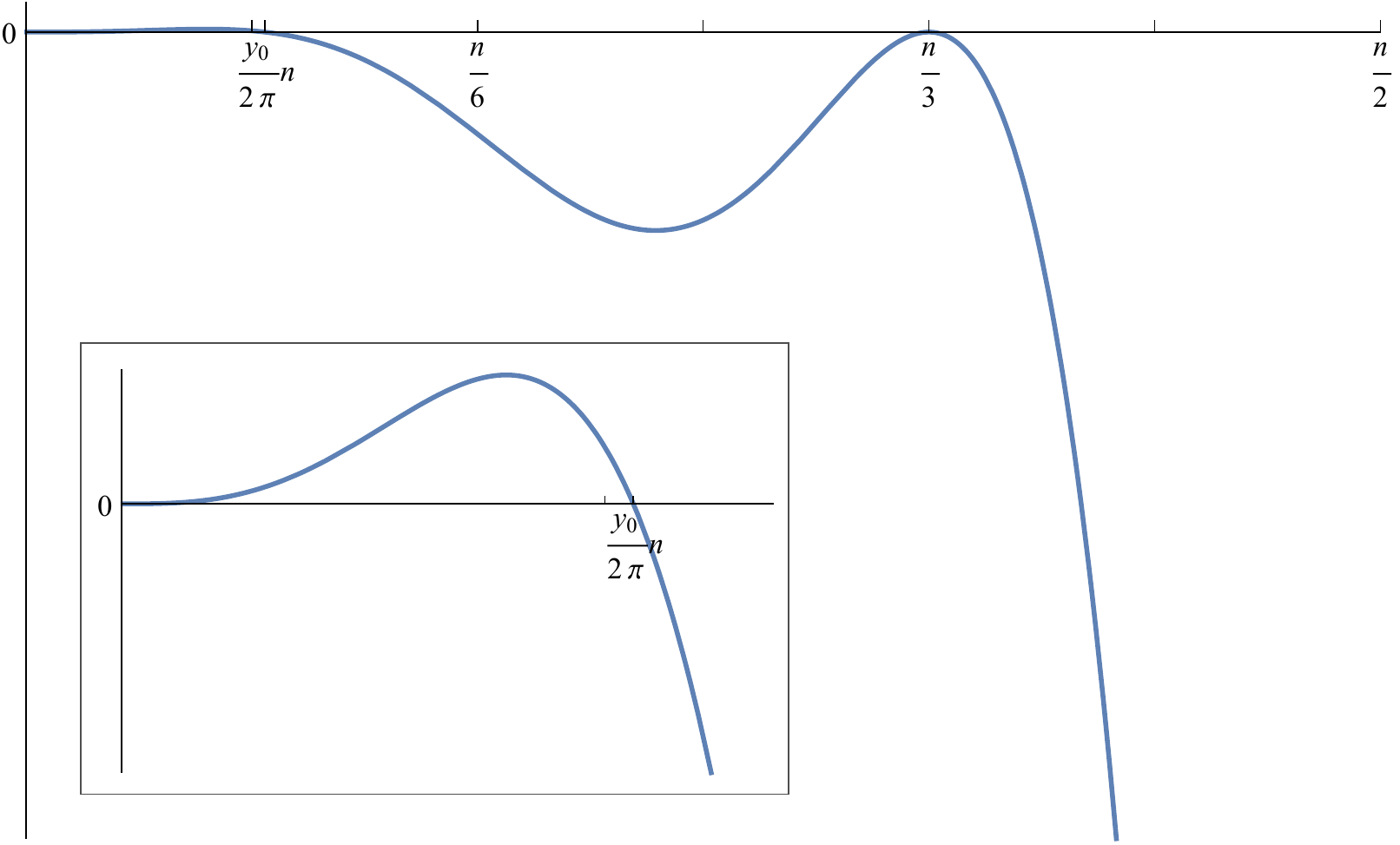}\\
  \caption{Plot of the reduced first Lyapunov coefficient $\ell_1(l,n)$ in equation~\eqref{eq:LC} as a continuous function for general $n \geq 4$ and $l\in \left[0,\tfrac{n}{2}\right)$. The shape remains the same up to scaling for different $n$. The part of the plot around the nontrivial zero $\tfrac{y_0}{2\pi}n$ is magnified in the box, showing that $\ell_1$ is only positive for $l\in(0,\tfrac{y_0}{2\pi}n)$.}\label{fig:PlotLC}
\end{figure}

As it turns out, equation~\eqref{eq:LC} is a useful and easy way to compute the first Lyapunov coefficient. In order to conclude whether the bifurcation is sub- or supercritical, we need to show which combinations of $l$ and $n$ yield a positive or negative value of $\ell_1(l,n)$.

Firstly, observe that the factors in front of the quotient in~\eqref{eq:LC} are always positive, either by the square or by the fact that the tangent function is positive for $0 < \tfrac{l}{n} < \tfrac{1}{2}$.
Secondly, it is easy to check that the denominator of the big quotient of~\eqref{eq:LC} is positive on the entire domain. It remains to determine where the numerator of the quotient is positive or negative.
Let
\begin{equation}\label{eq:LCNumerator}
  L(y) := 5\cos y+8\cos 2y -2\cos 3y -8
\end{equation}
be the numerator defined as a continuous function in $y \in [0,\pi]$, where we replaced $\tfrac{2\pi l}{n}$ by the variable $y$, see Figure~\ref{fig:PlotLCNumerator}. Its derivative satisfies
\begin{equation*}
  L'(y) = \sin y\left(24\cos^2 y-32\cos y-11\right).
\end{equation*}
It is easy to see that $L'(y)$ has a zero if $\sin y = 0$ or if the second order polynomial $24x^2-32x-11$, obtained by the substitution $x = \cos y$, has a zero. In the first case, $\sin y = 0$, we have zeroes at $y = 0$ and $y = \pi$, which give a global and a local maximum of $L(y)$. The second case provides us with exactly two zeroes of the polynomial, namely $x_{\pm} = \tfrac{2}{3} \pm \sqrt{\tfrac{65}{72}}$. However, only $x_-$ is a true solution of $L'(y)$, since the value $x_+$ lies outside the range of the cosine function. Hence, $y_{\min} = \arccos\Big(\tfrac{2}{3} - \sqrt{\tfrac{65}{72}}\Big)$ gives the global minimum of $L(y)$ (see Figure~\ref{fig:PlotLCNumerator}). These three solutions are the only zeroes of $L'(y)$ on the domain $[0,\pi]$. It follows that the sign of $L'(y)$ on each of the intervals $(0,y_{\min})$ and $(y_{\min},\pi)$ does not change.
\begin{figure}[ht!]
  \centering
  \includegraphics[width=0.49\textwidth]{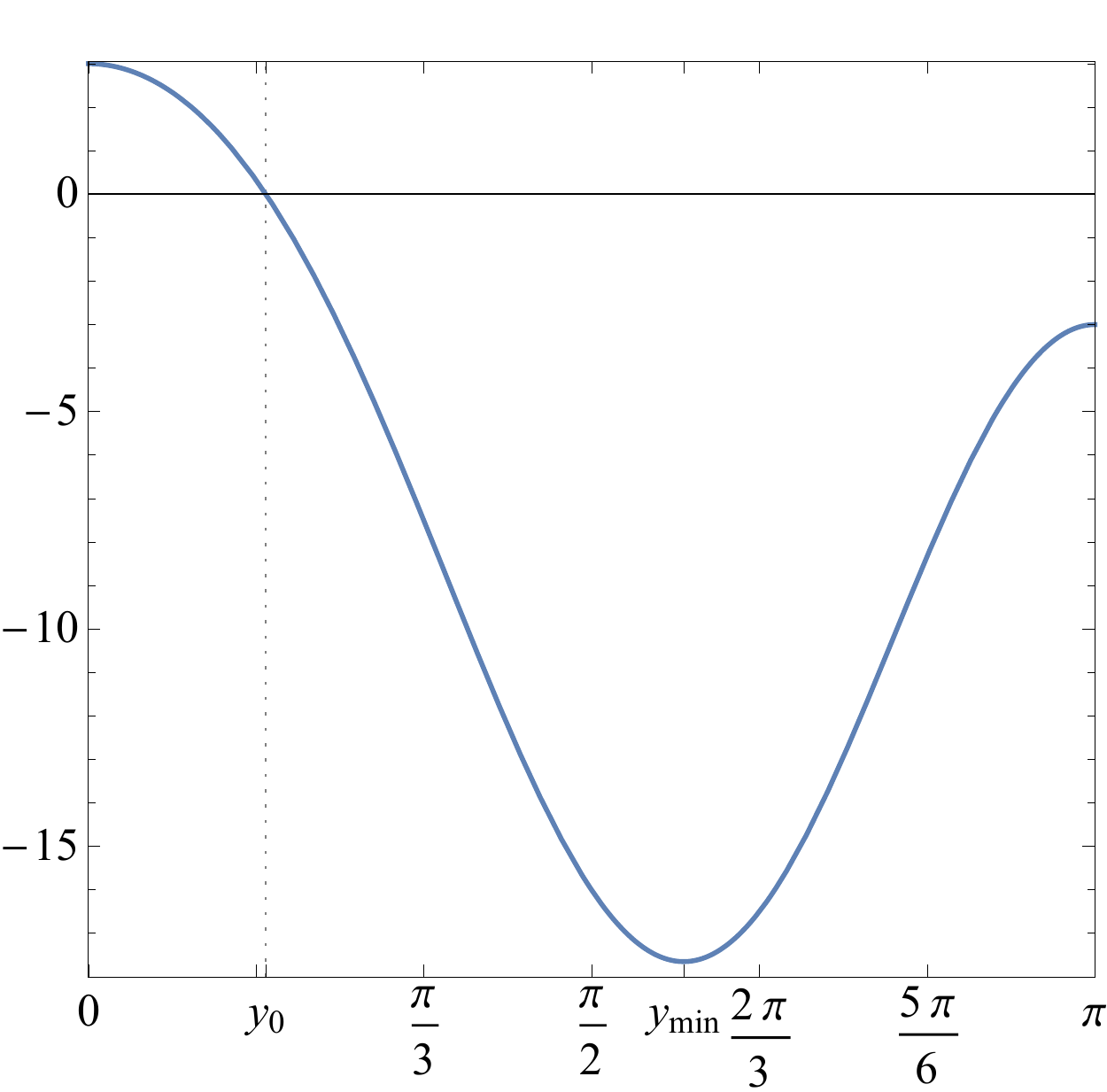}\\
  \caption{Plot of the numerator $L(y)$ in~\eqref{eq:LCNumerator} for $y \in [0,\pi]$. This also indicates where the first Lyapunov coefficient $\ell_1$ is positive or negative, since it has the same sign as $L(y)$.}\label{fig:PlotLCNumerator}
\end{figure}

In order to determine the zeroes of $L(y)$, let us consider first the interval $(0,y_{\min})$. Observe that the derivative $L'$ is negative on this interval (because $L'\left(\tfrac{\pi}{2}\right) = -11$, for example). Since $L(0) = 3$, this means that $L$ can have at most one zero on the interval $(0,y_{\min})$.
Likewise, $L'(y) > 0$ for all $y\in(y_{\min},\pi)$ (by the fact that $L'\left(\tfrac{2\pi}{3}\right) = \tfrac{11}{2}\sqrt{3}$). Since $L(\pi) = -3$, this implies that $L$ has no zero on $(y_{\min},\pi)$, while it should have at least one zero on the entire interval $(0,\pi)$. Consequently, there exists exactly one $y_0 \in (0,y_{\min})$ such that $L(y_0) = 0$, namely $y_0 \approx 0.5545380$ and, moreover, we have that
\begin{equation*}
  L(y) \left\{
        \begin{array}{ll}
            >0 &\quad  \text{if}\ y\in(0,y_0),\\
            <0 &\quad  \text{if}\ y\in(y_0,\pi).
        \end{array}
    \right.
\end{equation*}
This can also be seen from Figure~\ref{fig:PlotLCNumerator}.

To conclude: the simplified expression~\eqref{eq:LC} for the first Lyapunov coefficient $\ell_1$ is positive for $\tfrac{l}{n} < \tfrac{y_0}{2\pi}$ and negative for $\tfrac{l}{n} > \tfrac{y_0}{2\pi}$, where $\tfrac{y_0}{2\pi} \approx 0.08825746$. Therefore, the first Lyapunov coefficient itself is positive (hence, a subcritical Hopf bifurcation) for any $\tfrac{l}{n}\in(0,\tfrac{y_0}{2\pi})$ and negative (hence, a supercritical Hopf bifurcation) for $\tfrac{l}{n}\in\left(\tfrac{y_0}{2\pi},\tfrac{1}{2}\right)\setminus\{\tfrac{1}{3}\}$ (see Figure~\ref{fig:PlotLC}). \qed

\subsection{Proof of Proposition~\ref{prop:firsthopf}}\label{sec:proofTW}
Lemma~\ref{lem:Bifevcrossing} implies that the trivial equilibrium undergoes a Hopf bifurcation at the parameter value $F_{\Hf}(l,n)=1/f(l,n)$. The first Hopf bifurcation for $F>0$ takes place for the integer $l \in (0, \tfrac{n}{3})$ that minimizes the value of $F_{\Hf}(l,n)$, which is equivalent to maximizing $f(l,n)$. (If $l \in (\tfrac{n}{3},\tfrac{n}{2})$, then we obtain a negative value $F_{\Hf}$.)

For all $n \geq 4$ except $n=7$ there exists at least one integer $l \in [\tfrac{n}{6}, \tfrac{n}{4}]$. Indeed, for $n=4, 5$ and $6$ this follows by simply taking $l=1$, and for $n=8, 9, 10$ and $11$ it follows by taking $l=2$. For $n\geq 12$ the width of the interval is larger than~1. We now claim that this implies that
\begin{equation*}
    l_1(n) = \argmax_{0 < l < n/3} f(l,n) \in [\tfrac{n}{6}, \tfrac{n}{4}], \quad n\neq7,
\end{equation*}
as well. To that end we use $\tilde{f}(y) = \cos y-\cos 2y$. Note that $y \in [\tfrac{\pi}{3},\tfrac{\pi}{2}]$ implies that $\tilde{f}(y) \geq 1$ and $y \in (0,\tfrac{\pi}{3})\cup(\tfrac{\pi}{2},\tfrac{2\pi}{3})$ implies that $0 < \tilde{f}(y) < 1$. Moreover, $l \in [\tfrac{n}{6}, \tfrac{n}{4}]$ implies that $\frac{2\pi l}{n} \in [\tfrac{\pi}{3},\tfrac{\pi}{2}]$. Therefore, $f(l,n)$ is maximized for some integer $l \in [\tfrac{n}{6}, \tfrac{n}{4}]$.

In case $n=7$, we can easily compute the smallest value $F_{\Hf}(l,n)$ for which a Hopf bifurcation occurs. We have shown in the proof of Lemma~\ref{lem:Bifevcrossing} (see Section~\ref{sec:evcrossing}) that this is the case for $l=1$.

Finally, assume that the bifurcation is a Hopf bifurcation, i.e., only one eigenvalue pair crosses the imaginary axis. Since $l_1(n) / n \in [\tfrac{1}{7}, \tfrac{1}{4}]$ it follows immediately from Theorem~\ref{thm:HBif} that the first Lyapunov coefficient is negative, which means that the bifurcation is supercritical. \qed

\subsection{Proofs for the two-parameter system}\label{sec:proof2par}
For the two-parameter system~\eqref{eq:Lorenz96eqExt} we have stated the existence of trapping regions in Proposition~\ref{prop:Exttrappingregion} and the occurrence of Hopf bifurcations in Lemma~\ref{lem:HopfExt}. These two results are proven below.

\begin{proof}[Proof of Proposition~\ref{prop:Exttrappingregion}]
Recall that $E = \tfrac{1}{2}\sum_j x_j^2$ and let $V^2 = 2E$, which means that $V = \|x\|$, using the Euclidean norm. Fix $n\in\Nat$ and $F\in\Real$. Take the time derivative of $E$ to obtain
\begin{align}\label{eq:LzExttotalenergyder}
\frac{\dd E}{\dd t} &= V\frac{\dd V}{\dd t} = \frac{1}{2}\frac{\dd (V^2)}{\dd t}
     = \sum_{j=1}^n x_j \dot{x}_j\nonumber\\
    &= \sum_{j=1}^n x_j (x_{j-1}(x_{j+1}-x_{j-2}) - x_j + G(x_{j-1}-2x_j+x_{j+1}) + F)\nonumber\\
    &= \sum_{j=1}^n x_j \left(- x_j + G(x_{j-1}-2x_j+x_{j+1})\right) + F\sum_{j=1}^n x_j\nonumber\\
    &= \langle x, Ax\rangle + F\sum_{j=1}^n x_j,
\end{align}
using the standard inner product on $\Real^n$ with $A$ the matrix equal to the Jacobian at the origin. This matrix is circulant and symmetric with first row equal to
\begin{equation*}
    (c_0, c_1, \dots, c_{n-1}) = (-1-2G, G, 0, \ldots, 0, G).
\end{equation*}
Therefore, all eigenvalues of $A$ are real and given by \cite{Gray06}:
\begin{align*}
    \lambda^A_j(G,n) &= \sum_{k=0}^{n-1} c_k \rho_j^k\nonumber\\
        &= -1-2G +G\rho_j^1 + G\rho_j^{n-1}\nonumber\\
        &= -1 - 2G\left(1-\cos\tfrac{2\pi j}{n}\right).
\end{align*}

To estimate the first term in the right-hand-side of equation~\eqref{eq:LzExttotalenergyder}, we use that the Rayleigh quotient satisfies
\begin{equation*}
  \frac{\langle x, Ax\rangle}{\langle x, x\rangle} \leq \lambda^A_{\max}, \quad \text{for all}\ x\neq0.
\end{equation*}
Therefore, we have to find the largest eigenvalue $\lambda^A_{\max}$ of $A$. It is easy to check that for negative values of $G$ the largest eigenvalue occurs for $j=\tfrac{n}{2}$ (or the nearest integer). For non-negative values of $G$, the largest eigenvalue is always equal to $\lambda^A_0 = -1$.

The second term of equation~\eqref{eq:LzExttotalenergyder} can be estimated by the Cauchy-Schwarz inequality. Equation~\eqref{eq:LzExttotalenergyder} then becomes
  \begin{equation*}
    \begin{aligned}
        V\frac{\dd V}{\dd t} &= \langle x, Ax\rangle + F\sum_{j=1}^n x_j\\
        &\leq \lambda^A_{\max}\sum_{j=1}^n x_j^2 + \sqrt{n}|F| V\\
        &=\lambda^A_{\max}V^2 + \sqrt{n}|F| V,
    \end{aligned}
  \end{equation*}
which can be simplified to
\begin{equation*}
  \frac{\dd V}{\dd t} \leq \lambda^A_{\max}V + \sqrt{n}|F|.
\end{equation*}

Since $V$ is defined as the norm of $x$, $\lambda^A_{\max}$ needs to be negative in order to obtain a solution for $V$ such that $\frac{\dd V}{\dd t} < 0$. This is certainly the case when $G\geq 0$. For negative $G$ and even $n$ the largest eigenvalue is obtained by $\lambda^A_{\max} = \lambda^A_{n/2} = -1-4G$, which is less than 0 if $G > -\tfrac{1}{4}$ for any $n$. For odd values of $n$ we may allow slightly smaller values of $G$, which makes $G = -\tfrac{1}{4}$ a suitable upper bound for the range of $G$ without trapping region.

It follows that $\tfrac{\dd V}{\dd t} < 0$, whenever $G > -\tfrac{1}{4}$ and whenever $V$ is larger than the radius
\begin{equation*}
    R(n) = -\frac{\sqrt{n}|F|}{\lambda^A_{\max}}.
\end{equation*}
Therefore, under these conditions, each sphere with radius $r > R(n)$ is a trapping region for dimension $n$. In particular, if $G=0$ then $R= \sqrt{n}|F|$.
\end{proof}

\begin{proof}[Proof of Lemma~\ref{lem:HopfExt}]
Let $n$ and $l$ be as given. We choose $F$ as the bifurcation parameter. In order to have a Hopf bifurcation for the $l$-th eigenvalue pair $\{\kappa_l,\kappa_{n-l}\}(F,G,n)$, the real part $\mu_l := \RE \kappa_l$ has to vanish. This occurs if $F$ equals
\begin{equation}\label{eq:FHExt}
    F_{\Hf}(G,l,n) = \frac{1}{f(l,n)}\left(1+2G(1-\cos\tfrac{2\pi j}{n})\right).
\end{equation}
For these parameter values we have a purely imaginary eigenvalue pair with the absolute value of the imaginary part given by
\begin{equation*}
    \omega_0(G,l,n) = -F_{\Hf}(G,l,n) g(l,n) = \left|1+2G\left(1-\cos\tfrac{2\pi j}{n}\right)\right|\frac{\cos\tfrac{\pi l}{n}}{\sin\tfrac{\pi l}{n}}.
\end{equation*}
Note that $\omega_0 = 0$ if $l = \tfrac{n}{2}$ or if $G = -\tfrac{1}{2}(1-\cos\tfrac{2\pi l}{n})^{-1}$ for some $l$. The last condition is equivalent to $F_{\Hf} = 0$ by formula~\eqref{eq:FHExt} and implies even that $\kappa_l = 0$. Therefore, this parameter value needs to be excluded.

Furthermore, the eigenvalue pair crosses the imaginary axis with nonzero speed, by the fact that for our restriction of $l$ we have
\begin{equation*}
    \mu_l'(F,G,n) = f(l,n) \neq 0,
\end{equation*}
where the derivative is with respect to the bifurcation parameter $F$. (If we would have taken $G$ as bifurcation parameter, then $\mu_l'$ will be nonzero as well.)

Equation~\eqref{eq:FHExt} thus gives us for general $n$ and for each allowed $l$ a whole line of Hopf bifurcations, which is linear in $G$. Rewritten in terms of $F$ gives the linear curves~\eqref{eq:L96ExtHopf}.
\end{proof}

\section{Conclusions and outlook}
The main goal of the current study was to investigate the dynamical features of the Lorenz-96 model and subsequently prove their existence by analytical means. In this partial inventory, we have proven the existence of Hopf and Hopf-Hopf bifurcations for the trivial equilibrium in all dimensions $n\geq 4$. It is also shown that if the first bifurcation for $F>0$ is a Hopf bifurcation, then it is supercritical by providing an exact formula for the first Lyapunov coefficient, which holds for all possible $n$ and $l$. Our analytical results coincide with \textsc{MatCont}'s numerical estimates of the parameter value $F_{\Hf}$ and (up to a scaling) the value of the first Lyapunov coefficient. Furthermore, the periodic orbits born at the first Hopf bifurcation have the physical interpretation of travelling waves.

We have proved a necessary and sufficient condition for the occurrence of a Hopf-Hopf bifurcation in the original Lorenz-96 model. To unfold these Hopf-Hopf bifurcations we introduced an extra parameter $G$ via a Laplace-like diffusion term. For this particular unfolding the Hopf bifurcations of the trivial equilibrium are given by straight lines in the $(F,G)$-plane and their intersections give Hopf-Hopf bifurcations. In the special case of dimension $n=12$ a Hopf-Hopf bifurcation lies on the line $G=0$ and is in fact the first bifurcation through which the trivial equilibrium bifurcates for $F>0$. We have shown that this codimension two bifurcation point acts as an organising centre. For dimension $n=36$ we find a Hopf-Hopf bifurcation close to the line $G=0$. In both cases normal form analysis shows that two periodic attractors coexist in a region of the $(F,G)$-plane between two Neimark-Sacker bifurcation curves. This region actually intersects the line $G=0$, which means that multistability also occurs in the original Lorenz-96 model. We expect that this mechanism leading to multistability can be found in other dimensions as well. Therefore, by adding a new parameter to the Lorenz-96 model the dynamics observed in the original model can be explained better.

Finally, we have numerically investigated the dynamics of the model for dimensions up to $n=100$ and parameter values $F$ beyond the first Hopf bifurcation value. In dimension $n=4$, a periodic attractor disappears through a fold bifurcation. After this fold bifurcation intermittency is detected, which is possibly explained through a nearby heteroclinic cycle between four equilibria. For dimensions up to $n=100$ that are multiples of $5$, a pattern is discovered with a persisting period-doubling bifurcation. However, we do not expect this pattern to persist for high dimensions. For general $n$, the routes to chaos are numerous and can comprise intermittent transitions, period-doubling cascades and possibly Newhouse-Ruelle-Takens scenarios.

Based on our results we can draw three major conclusions. Firstly, contrary to the persistence of Hopf and Hopf-Hopf bifurcations for any dimension $n\geq 4$, no clear pattern on bifurcations of periodic attractors is found. Secondly, the dependence of the dynamics on $n$ shows the importance of choosing appropriate values of the parameters in specific applications of the Lorenz-96 model such as those listed in Table~\ref{tab:AppLz96}. Lastly, the observation that the wave number of the travelling waves increases with $n$ indicates that the Lorenz-96 cannot be interpreted as a discretised PDE model.

Despite the lack of a clear bifurcation pattern for all dimensions $n$, the Lorenz-96 remains an interesting model to study for its rich dynamics. There are several open questions that have not been addressed in the present paper. For example, how can chaotic attractors with more than one positive Lyapunov exponent arise? Or, how typical is the multistability property in higher dimensions and how does it influence the dynamics? What dynamics can be expected for $F<0$? What is the role of symmetries in the Lorenz-96 model? We are therefore continuing to investigate this model on its dynamical behaviour using both analytical and numerical methods.

\paragraph{Acknowledgements} We would like to thank the two anonymous reviewers for their useful comments and suggestions that have helped to improve this paper.

\newpage\label{sec:Bibliography}
\bibliographystyle{Lz96Hopf-DvKAES}
\bibliography{Lz96Hopf-DvKAES}
\addcontentsline{toc}{section}{References}

\end{document}